\numberwithin{equation}{section}
\theoremstyle{plain}
\def\bord{\partial\Omega}
\def\dTTr{\operatorname{\dot{T}r}}
\def\TTr{\operatorname{Tr}}
\def\Tr{\operatorname{tr}}
\def\dTr{\operatorname{\dot{t}r}}
\def\loc{\operatorname{loc}}
\def\B{\mathcal{B}}
\def\Hp{\B(\bord)}
\def\Hm{\B'(\bord)}
\def\Hpo{\dot{\B}(\bord)}
\def\Hmo{\dot{\B}'(\bord)}
\def\Nscr{\mathscr{N}}
\def\Ascr{\mathscr{A}}
\def\Dcal{\mathcal{D}}
\def\Scal{\mathcal{S}}
\def\Vcal{\mathcal{V}}
\def\Wcal{\mathcal{W}}
\def\Kcal{\mathcal{K}}
\def\Gcal{\mathcal{G}}
\def\Ical{\mathcal{I}}
\def\Rcal{\mathcal{R}}
\def\NOt{\dot{\Nscr}}
\def\del{\partial}
\def\R{\mathbb{R}}
\def\C{\mathbb{C}}
\def\dx{{\rm d}x}
\def\dy{{\rm d}y}
\def\nbord{\R^n\backslash\bord}
\DeclareMathOperator{\supp}{supp}
\DeclareMathOperator{\cpct}{Cap}
\newcommand{\ddn}[1]{\frac{\partial #1}{\partial\nu}}
\newcommand{\ddny}[2]{\frac{\partial_{#2} #1}{\partial\nu}}
\newtheorem{theorem}{Theorem}[section]
\newtheorem{proposition}[theorem]{Proposition}
\newtheorem{corollary}[theorem]{Corollary}
\newtheorem{lemma}[theorem]{Lemma}
\newtheorem{remark}[theorem]{Remark}
\newtheorem{examples}[theorem]{Examples}
\providecommand{\keyword}[1]{\noindent\textbf{Keywords:} #1}
\providecommand{\MSC}[1]{\noindent\textbf{2020 MSC:} #1}
\author{Gabriel Claret}\thanks{GC: CentraleSup\'elec, Universit\'e Paris-Saclay, France (gabriel.claret@centralesupelec.fr) Research partially included in the \enquote{Parcours Recherche} teaching program at CentraleSupélec.}
\author{Michael Hinz}\thanks{MH: University of Bielefeld, Germany
(mhinz@math.uni-bielefeld.de) Research supported in part by the DFG IRTG 2235: \enquote{Searching for the regular in the irregular: Analysis of singular and random systems} and by the DFG CRC 1283: \enquote{Taming uncertainty and profiting from randomness and low regularity in analysis, stochastics and their applications}. }
\author{Anna Rozanova-Pierrat}\thanks{ARP: CentraleSup\'elec, Universit\'e Paris-Saclay, France
(anna.rozanova-pierrat@centralesupelec.fr) Research supported in part by CNRS INSMI IEA (International Emerging Actions 2022) \enquote{Functional and applied analysis with fractal or
non-Lipschitz boundaries}}
\author
{Alexander Teplyaev}\thanks{AT: University of Connecticut, USA
(teplyaev@uconn.edu) Research supported in part by the NSF 
DMS 1613025, 
1950543, 2349433 
and the Simons and Fulbright Foundations. The project was partially conducted during a stay at CentraleSup\'elec, Universit\'e Paris-Saclay, whose kind hospitality is gratefully acknowledged.}
\title{Layer potential operators for transmission problems on extension domains}
\date{\today}
\begin{document}

\begin{abstract}
We use the well-posedness of transmission problems on classes of two-sided Sobolev extension domains to give variational definitions for (boundary) layer potential operators and Neumann-Poincaré operators. These classes of domains contain Lipschitz domains, and also domains with fractal boundaries. 
Although our variational formulation does not involve any measures on the boundary, we recover the classical results in smooth domains by considering the surface measure on the boundary. We discuss properties of these operators and generalize basic results in imaging beyond the Lipschitz case.
\end{abstract}

\maketitle

\keyword{fractal boundaries; layer potentials; Poincaré-Steklov operator; Neumann-Poincaré operator; Calder\'on projector; elliptic transmission problems; inverse problems.}

\MSC{31E05; 35J25; 31B10; 31A10; 31B20; 31A25.}

\tableofcontents
\section{Introduction}

The aim of our work is to introduce a purely variational approach in order to extend the   
classical theory of layer potential operators, Neumann-Poincar\'e operators, 
boundary layer operators and transmission problems to a class of rough (two-sided) extension domains, 
with possibly non-Lipschitz or even fractal boundaries. For rectifiable boundaries a deep study of layer potential operators and related topics has been carried out in the fundamental recent books \cite{MITREAx3-I,MITREAx3-II,MITREAx3-III,MITREAx3-IV,MITREAx3-V}. Here we extend several basic results to a much wider class of boundaries with no rectifiability assumption.
We work on \emph{extension domains} \cite{JONES-1981,HAJLASZ-2008}. They are not necessarily Lipschitz and may have fractal boundaries.
 
In the classical theory of partial differential equations on smooth domains, the (boundary) layer potential and Neumann-Poincar\'e operators are defined as convolutions with Green's functions and their first derivatives, see for instance \cite{FABES-1978, KHAVINSON-2006, MAZYA-1991, TRIEBEL-1992, VLADIMIROV-1971} or \cite{ANDO-2021}. They provide explicit representation formulas for solutions to boundary value problems, and they are fundamental tools in inverse methods, numerical analysis and certain areas of spectral theory. 

A systematic study of the classical approach to layer potentials in the context of Lipschitz domains was provided in \cite{VERCHOTA-1984}, following seminal studies on boundary value problems \cite{JERISON-1982}, and singular integrals \cite{CALDERON-1977, COIFMAN-1982}. Since then, the Lipschitz case has become a standard level of generality for many applications, see for instance \cite{AMMARI-2004,AMMARI-2009, COSTABEL-1988,COSTABEL-WENDLAND-1986, DAHLBERG-1988, DAHLBERG-1990, HSIAO-2008,MCLEAN-2000,NEDELEC-2001, STEINBACH-2001}. 
A different approach was adopted in \cite{BARTON-2014}, 
where the weak well-posedness of transmission problems 
was used to define the layer potentials for Lipschitz domains.
In view of the well-established use of Hilbert space methods in potential theory --- 
see \cite{COURANT-HILBERT-1937, CONSTANTINESCU-CORNEA-1963, DENY-LIONS-1954,
WEYL-1940} for classical references and \cite{BOULEAU-HIRSCH-1991, CHEN-FUKUSHIMA-2012, FOT94, LEJAN-1978} for later developments --- the variational definitions in \cite{BARTON-2014} are very natural.

Boundary layer operators on piecewise smooth, not necessarily Lipschitz domains were already studied in \cite[Chapter 5]{MAZYA-1991}. Results for layer potentials in the context of Riemannian manifolds were obtained in \cite{MITREA-TAYLOR-1999},  results for layer potentials on half-spaces with boundary data in Besov spaces in \cite{BARTON-MAYBORODA-2016}. The research on the boundedness of singular integral operators in \cite{CALDERON-1977, COIFMAN-1982, VERCHOTA-1984} culminated in the comprehensive study \cite{MITREAx3-I,MITREAx3-II,MITREAx3-III,MITREAx3-IV,MITREAx3-V} of such operators on uniformly rectifiable sets \cite{DAVID-SEMMES-1991, DAVID-SEMMES-1993}. 
Uniformly rectifiable sets in $\R^n$ are Ahlfors $(n-1)$-regular closed subsets having \enquote{big pieces of Lipschitz images}, see \cite[Definitions 5.10.1 and 5.10.2]{MITREAx3-I}, and the class of such sets is basically characterized by the $L^2$-boundedness of singular integral operators, \cite[Theorem 5.10.2]{MITREAx3-I}. A different major stream of research focused on the behaviour of harmonic functions on a domain $\Omega$ and harmonic measures on its boundary $\partial\Omega$, see \cite{Dahlberg-1977, Jerison-1981, JERISON-1982, Jones-1982} and the later references \cite{Capogna-2005, David-1990, Bass-Burdzy-1991, Aikawa-2007}. 

We are interested in boundary value problems involving rough boundaries that may be fractal and may even have different parts of different Hausdorff dimensions. Well-known and more specific references on linear elliptic partial differential equations on fractal domains are \cite{NYSTROM-1996, JONSSON-1997,  LAPIDUS-1991, LAPIDUS-1995}, applications to the heat equation were studied in~\cite{LEVITIN-1996,VANDENBERG-2000}. References closely related to our work are the articles \cite{BANNISTER-2022, Caetano-2021, CHANDLER-WILDE-2017, ARXIV-CAETANO-2022, Gibbs-2023,MAGOULES-2021} on Helmholtz models, \cite{DEKKERS-2022, DEKKERS-2022-1} on the non-linear wave equation (the Westervelt equation) and \cite{HINZ-2021-1, HINZ-2021, HINZ-2023} on varying domains and the existence of optimal shapes.

The highly active research around domains with rough boundaries motivates an extension of the variational approach to layer potentials beyond the Lipschitz case, and follow-up questions about convergence and optimization make it desirable to have a generalization which is flexible and rather easy to handle.

The purpose of  our work is to propose a lightweight and streamlined generalization of basic results on layer potential operators. The domains we consider are \enquote{first order} Sobolev extension domains $\Omega$ in $\R^n$ \cite{JONES-1981, HAJLASZ-2008}, having a boundary $\partial\Omega$ of positive capacity \cite{MAZ'JA-1985}. Basic examples in the planar case are quasidisks or complements of Cantor sets of positive Hausdorff dimension. Domains having sharp inward or outward cusps or \enquote{collapsing} boundaries are not covered; in particular, certain fractal trees \cite{ACHDOU-2008} do not fall within the scope of our work.
 
We use established results \cite{BIEGERT-2009} to introduce trace and extension operators between the domain $\Omega$ and its boundary $\partial\Omega$ in the context of the Sobolev space $H^1(\Omega)$ (respectively, the homogeneous Sobolev space $\dot{H}^1(\Omega)$). On the boundary $\partial\Omega$, we use the corresponding abstract trace space $\Hp$ (respectively, $\Hpo$), endowed with natural trace norms. 
That approach is \emph{measure free} in the sense that we do not specify any \enquote{surface measure} on $\partial\Omega$, nor do we consider $L^2$-spaces on $\partial\Omega$, unlike \cite{VERCHOTA-1984}. Instead, we make a systematic use of the trace spaces $\Hp$ and $\Hpo$ and their respective duals $\Hm$ and $\Hmo$. As a consequence, we observe a variety of \emph{natural isometries}; the preservation of those isometries might be considered a guiding theme of 
our work.

To set  notations, we briefly survey Dirichlet and Neumann problems in the weak (variational) sense for one-sided domains in Section \ref{SecBoundCase}. 
In Section \ref{Sec:P-S}, we briefly discuss basic features of the related Poincar\'e-Steklov operators. Suitable two-sided domains are then introduced in Section \ref{Sec:TrProb}. 
Roughly speaking, we require that both the inner domain $\Omega$ and the outer domain $\R^n\backslash \overline{\Omega}$ are non-empty and Sobolev extension domains, and we assume that the separating boundary $\partial\Omega$ has zero Lebesgue measure.
In the homogeneous case, we additionally assume that $\Omega$ is bounded. 
In Section~\ref{Sec:Resolvent}, we link the layer potential operators we define to resolvent representations and recover the classical integral formulas.
In Section~\ref{Sec:Neumann-Poincaré}, we introduce Neumann-Poincar\'e operators, boundary layer potentials and Calder\'on projectors and study some of their properties.
In Section~\ref{Sec:Imaging}, we discuss an application to a problem in imaging, formerly understood in the Lipschitz case only. In \ref{PSecBoundCase} we briefly recall some potential theoretic notions and provide some details for the proofs of results of Section~\ref{SecBoundCase}.

\subsection*{Acknowledgements}

The authors express their sincere gratitude to 
David Hewett and Simon N. Chandler-Wilde 
for their invaluable advice and insightful discussions, 
which significantly contributed to the advancement of our work.
They also thank the anonymous referees whose comments
and suggestions helped to improve the manuscript.

\section{Boundary value problems on admissible domains}\label{SecBoundCase}

\subsection{Admissible domains and traces}\label{SS:admissible}

Let $\Omega$ be a nonempty open subset of $\R^n$. As usual, we write $H^1(\Omega)$ for the Hilbert space of all $u\in L^2(\Omega)=L^2(\Omega,\R)$ such that $\nabla u \in L^2(\Omega,\R^n)$ and having the scalar product 
\begin{equation}\label{E:sp}
\left\langle u,v\right\rangle_{H^1(\Omega)}=\int_\Omega \nabla u\cdot\nabla v~\dx+\int_\Omega uv~\dx;
\end{equation}
here $\nabla u$ is interpreted in distributional sense.
We write $\dot{H}^1(\Omega)$ for the Hilbert space formed by the vector space of all $u\in L^2_{\loc}(\Omega)$ with $\nabla u\in L^2(\Omega,\R^n)$ modulo locally constant functions, endowed with the scalar product 
\begin{equation}\label{E:spdot}
\left\langle u,v\right\rangle_{\dot{H}^1(\Omega)}=\int_\Omega \nabla u\cdot\nabla v~\dx.
\end{equation}
Details on the space $\dot{H}^1(\Omega)$ in the case of connected $\Omega$ can be found in \cite[Section 2.2.4]{CHEN-FUKUSHIMA-2012}, \cite{DENY-LIONS-1954} or \cite[Section 1.1.13]{MAZ'JA-1985}; a generalization to nonempty open subsets with multiple connected components is straightforward. We agree to use the notations \eqref{E:sp} and \eqref{E:spdot} whenever the right-hand side makes sense. As usual, the Hilbert space norms induced by \eqref{E:sp} respectively \eqref{E:spdot} are denoted by $\|\cdot\|_{H^1(\Omega)}$ respectively $\|\cdot\|_{\dot{H}^1(\Omega)}$.

We use the notions of capacity, quasi continuous representatives and quasi everywhere (q.e.) valid statements exclusively with respect to the space $H^1(\R^n)$. For the convenience of the reader, some background on these notions is collected in \ref{PSecBoundCase}. As explained there, if $u\in H^1(\mathbb{R}^n)$ or $u\in \dot{H}^1(\mathbb{R}^n)$, then $u$ has a quasi continuous representative $\widetilde{u}$.

We call a connected nonempty open set $\Omega\subset\mathbb{R}^n$ an \textit{$H^1$-extension domain} \cite{JONES-1981,HAJLASZ-2008}, if there is a bounded linear extension operator $\mathrm{E}_\Omega:H^1(\Omega)\to H^1(\R^n)$. 
If $\Omega$ is an $H^1$-extension domain and its boundary $\partial\Omega$ has positive capacity, then we call it an \emph{$H^1$-admissible} domain. 

Assume that $\Omega$ is $H^1$-admissible. We write $\mathcal{B}(\partial\Omega)$ for the vector space of all q.e. equivalence classes of pointwise restrictions $\widetilde{w}|_{\partial\Omega}$ of quasi continuous representatives $\widetilde{w}$ of classes $w\in H^1(\R^n)$. Given $u\in H^1(\Omega)$, we choose an arbitrary element $w$ of $H^1(\mathbb{R}^n)$ such that $w=u$ a.e. in $\Omega$ and define $\TTr_i u:=\widetilde{w}|_{\partial\Omega}$,  where $\widetilde{w}$ is an arbitrary quasi continuous representative $\widetilde{w}$ of $w$. By the following consequence of \cite[Theorem 6.1 and Remark 6.2]{BIEGERT-2009} we may regard $\TTr_i$ as the natural trace operator from $H^1(\Omega)$ onto $\mathcal{B}(\partial\Omega)$.

\begin{proposition}\label{P:trace}
Let $\Omega$ be an $H^1$-admissible domain. Then $u\mapsto \TTr_i u$ gives a linear surjection $\TTr_i:H^1(\Omega)\to \mathcal{B}(\partial\Omega)$, well-defined in the sense that given $u\in H^1(\Omega)$, its trace $\TTr_i u$ on $\partial\Omega$ does not depend on the particular choice of $w$ or $\widetilde{w}$. 
\end{proposition}

In particular, we have $\TTr_i u=(\mathrm E_\Omega u)^\sim|_{\partial\Omega}$, $u\in H^1(\Omega)$, for any bounded linear extension operator $\mathrm{E}_\Omega:H^1(\Omega)\to H^1(\R^n)$, see \cite[Corollary 6.3]{BIEGERT-2009}.

We use the subscript $i$ in $\TTr_i$ because we will later discuss two-sided domains and $\TTr_i$ as defined here will play the role of an \emph{interior} 
trace operator with respect to $\Omega$.

We call a connected nonempty open set $\Omega\subset \R^n$ an \textit{$\dot{H}^1$-extension domain} if there is a bounded linear extension operator $\dot{\mathrm{E}}_\Omega:\dot{H}^1(\Omega)\to \dot{H}^1(\R^n)$, and we call it \emph{$\dot{H}^1$-admissible} if it is an $\dot{H}^1$-extension domain and $\partial\Omega$ is compact and of positive capacity. If so, we write  $\dot{\mathcal{B}}(\partial\Omega)$ for the vector space {\color{black} of all q.e. equivalence classes modulo constants of pointwise restrictions $\widetilde{w}|_{\partial\Omega}$ of quasi continuous representatives $\widetilde{w}$ of elements $w$ of $\dot{H}^1(\R^n)$.} We point out that, as said before, capacities, q.e. notions and quasi continuity are all with respect to $H^1(\R^n)$, not $\dot{H}^1(\R^n)$; see \ref{PSecBoundCase}. {\color{black} Given an element $u$ of $\dot{H}^1(\Omega)$, we choose an arbitrary element $w$ of $\dot{H}^1(\mathbb{R}^n)$ extending $u$ beyond $\Omega$ to $\mathbb{R}^n$ and define $\dTTr_i u$ to be the q.e. equivalence class of $\widetilde{w}|_{\partial\Omega}$ modulo constants of an arbitrary quasi continuous representative $\widetilde{w}$ of $w$.}
 A variant of Proposition \ref{P:trace} shows that $\dTTr_i$ is the natural trace operator from $\dot{H}^1(\Omega)$ onto $\dot{\mathcal{B}}(\partial\Omega)$. 

\begin{proposition}\label{P:traceo}\mbox{}
Let $\Omega$ be an $\dot{H}^1$-admissible domain. Then $u\mapsto \dTTr_i u$ gives a linear surjection $\dTTr_i:\dot{H}^1(\Omega)\to \dot{\mathcal{B}}(\partial\Omega)$, well-defined in the sense that given $u\in \dot{H}^1(\Omega)$, {\color{black} its trace $\dTTr_i u$ on $\partial\Omega$ does not depend on the particular choice of the extension $w$ of $u$, nor on the choice of the representative $\widetilde{w}$.}
\end{proposition}

\begin{proof}
Given $u\in \dot{H}^1(\Omega)$, let ${\color{black} w}\in \dot{H}^1(\mathbb{R}^n)$ be such that ${\color{black} w}=u$ a.e. on $\Omega$ modulo constants and choose 
 a representative ${\color{black} v\in w}$ modulo constants. Let $U$ be a relatively compact open neighbourhood of $\partial\Omega$ and $\chi\in C_c^\infty(U)$ be a nonnegative function such that $0\leq \chi\leq 1$ and $\chi\equiv 1$ on a neighbourhood of $\partial\Omega$. Then $\chi {\color{black} v}$ is in $H^1(\mathbb{R}^n)$. By \cite[Theorem 6.1 and Remark 6.2]{BIEGERT-2009} the restriction ${\color{black}\widetilde{v}}|_{\partial\Omega}=(\chi {\color{black} v})^\sim|_{\partial\Omega}$ is uniquely determined in the q.e. sense. Its class modulo constants in $\dot{\mathcal{B}}(\partial\Omega)$ does not depend on the choice of {\color{black} $v$}.
\end{proof}

\begin{remark}
Classical references on the spaces $\dot{H}^1(\Omega)$ are \cite{DENY-LIONS-1954} and \cite[Sections 1.1.2 and 1.1.13]{MAZ'JA-1985}; a more recent discussion may be found in \cite[Section II.6]{GALDI-2011}. In those references, different symbols are used to denote these spaces. For the case $\Omega=\R^n$, the \enquote{dot}-notation $\dot{H}^1(\R^n)$ is established, cf.  \cite[Chapter 5]{TRIEBEL-2010}. Since the domains we consider are $H^1$- respectively $\dot{H}^1$-extension domains, we follow the notation for the $\R^n$-case and write $\dot{H}^1(\Omega)$.
\end{remark}

\begin{examples} \mbox{} 
\begin{enumerate}
\item[(i)] A rich class of examples for extension domains is provided in \cite{JONES-1981}. For $n\geq 2$ any $(\varepsilon,\delta)$-domain $\Omega\subset \R^n$ is an $H^1$-extension domain \cite[Theorem 1]{JONES-1981}, and any $(\varepsilon,\infty)$-domain $\Omega\subset \R^n$ is an $\dot{H}^1$-extension domain \cite[Theorem 2]{JONES-1981}. Uniform domains \cite{MARTIO-1979, VAISALA-1988} are $(\varepsilon,\delta)$-domains.
\item[(ii)] For $n\geq 2$ any $(\varepsilon,\infty)$-domain $\Omega\subset \R^n$ with $\R^n\backslash \overline{\Omega}$ nonempty is $H^1$-admissible, and if one of the two open sets is bounded, it is also $\dot{H}^1$-admissible.
\item[(iii)] For $n=1$ any interval $(a,b)\subset [-\infty,+\infty]$ with $a$ or $b$ finite is $H^1$-admissible, and if both are finite, also $\dot{H}^1$-admissible. For $n\geq 2$ the domain $\Omega=\R^n\backslash\{0\}$ is not $H^1$-admissible. 
\end{enumerate}
\end{examples}

\begin{remark}
A prominent class of domains, highly relevant in the study of harmonic measures,  is the class of NTA domains \cite[Section 3]{JERISON-1982}, see \cite{NYSTROM-1996,JONES-1980,Azzam-2017,Toro-2017} for discussions. It contains  all Lipschitz domains, but also domains with possibly fractal boundaries such as quasidisks. Any NTA domain is a uniform domain, see for instance \cite[Theorem 2.15]{Azzam-2017}. Any uniform domain with uniformly rectifiable boundary is an NTA domain 
\cite[Theorem 1.1]{Azzam-2017}. In $\R^2$ there is an equivalence~\cite[Theorem 2]{DEKKERS-2022},~\cite{JONES-1980} for bounded simply connected NTA domains with uniform domains. 
\end{remark}

\begin{remark}\label{R:trace}\mbox{}
\begin{enumerate}
\item[(i)] Suppose that $\Omega$ is a bounded $H^1$-extension domain. Then the vector space $\dot{H}^1(\Omega)$ is isomorphic to the space of all $u\in H^1(\Omega)$ with $\int_\Omega u(x)\:\dx=0$, and by Poincar\'e's inequality, \eqref{E:sp} and \eqref{E:spdot} are equivalent scalar products on this space. In particular, the vector spaces $H^1(\Omega)$ and $\dot{H}^1(\Omega)\oplus \R$ are isomorphic.
\item[(ii)] For a bounded domain $\Omega$ which is both $H^1$- and $\dot{H}^1$-admissible, the vector spaces $\mathcal{B}(\partial\Omega)$ and $\dot{\mathcal{B}}(\partial\Omega)\oplus \R$ are isomorphic.
\end{enumerate}
\end{remark}

\begin{remark}\label{R:boundary_spaces}
If $\Omega$ is a bounded Lipschitz domain, then, up to equivalent norms, $\Hp$ equals $H^{1/2}(\partial\Omega)$ and $\Hm$ equals the dual $H^{-1/2}(\partial\Omega)$ of $H^{1/2}(\partial\Omega)$, cf. ~\cite[Chapter IV, Appendix]{DUTRAY-LIONS1988}.
The \enquote{dot} versions equal the homogeneous counterparts $\dot{H}^{1/2}(\partial\Omega)$ respectively $\dot{H}^{-1/2}(\partial\Omega)$ of these spaces. The spaces $H^{1/2}(\partial\Omega)$ can be endowed with explicit norms of fractional Sobolev type; these norms involve the surface measure $\sigma$ on $\partial\Omega$.

If, more generally, $\partial\Omega$ is the support of a measure $\mu$ satisfying certain scaling conditions, then the trace space $\Hp$ is a Besov type space. It can be endowed with an explicit norm involving $\mu$, see \cite{JONSSON-1984, JONSSON-1994}. 

Here we do not require $\partial\Omega$ to be Lipschitz or to carry any measure. Moreover, $\partial\Omega$ may have parts of different Hausdorff dimensions. The present formulation does not give any explicit norm representation for $\Hp$ and may therefore not be sufficient to discuss regularity features. But it works under minimal assumptions, which is useful in view of  convergence and compactness properties, cf. \cite{HINZ-2021-1,HINZ-2021,HINZ-2023}. 
\end{remark}

\begin{remark}\label{R:multiplecomponents}
The spaces $\mathcal{B}(\partial\Omega)$ and $\dot{\mathcal{B}}(\partial\Omega)$ are defined under the assumption that $\Omega$ is connected. A generalization of the spaces $\mathcal{B}(\partial\Omega)$ to the case of finite unions $\Omega$ of mutually disjoint $H^1$-admissible domains does not pose any problem. Care is needed for the spaces $\dot{\mathcal{B}}(\partial\Omega)$. Suppose that $\Omega=\Omega_1\cup ...\cup \Omega_N$ with $\dot{H}^1$-admissible $\Omega_j$ having mutually disjoint closures $\overline{\Omega}_j$. The space $\dot{\mathcal{B}}(\partial\Omega)$ can be defined as a space of classes of q.e. defined functions on $\partial\Omega$ modulo locally constant functions constant on each $\partial\Omega_j$. The argument of Proposition \ref{P:traceo} can be applied separately to each connected component $\Omega_j$ by taking mutually disjoint neighbourhoods $U_j$ of the $\partial\Omega_j$, respectively. Similarly as before, this gives a natural trace operator. However, the situation may call for a refined notation, in particular, when discussing complements. In $\mathbb{R}^2$ both $\Omega:=B(0,2)\setminus \overline{B(0,1)}$ and $\mathbb{R}^2\setminus \overline{\Omega}=B(0,1)\cup (\mathbb{R}^2\setminus \overline{B(0,2)})$ are $\dot{H}^1$-admissible and have the same boundary $\partial\Omega$. The space $\mathcal{B}(\partial\Omega)$, based on $\Omega$, and the space $\mathcal{B}(\partial\Omega)$, based on $\mathbb{R}^2\setminus\overline{\Omega}$, differ: while the elements of the former are defined modulo a single constant, the elements of the latter have to be understood \enquote{modulo two constants}, one on $\partial B(0,1)$ and another on $\partial B(0,2)$. 
\end{remark}

\subsection{Orthogonality, harmonic extensions and isometries}\label{SS:ortho}

We center our discussion of boundary value problems around restrictions, duals and inverses of trace operators.

Let $\Omega\subset \R^n$ be an $H^1$-admissible domain. We write $H^1_0(\Omega)$ for the closure in $H^1(\Omega)$ of the set $C^\infty_c(\Omega)$ of infinitely differentiable functions with compact support in $\Omega$. Let $V_1(\Omega)$ denote the orthogonal complement of $H^1_0(\Omega)$ in $H^1(\Omega)$,
\begin{equation}\label{V1}
H^1(\Omega)=H^1_0(\Omega)\oplus V_1(\Omega).
\end{equation}

Given $f\in \Hp$, an element $u$ of $H^1(\Omega)$ is called a \emph{weak solution} of the Dirichlet problem
\begin{equation}\label{DO}
\begin{cases}
-\Delta u+u&=0\quad \text{in $\Omega$}\\
\qquad u|_{\partial\Omega}&=f
\end{cases}
\end{equation}
if $\TTr_i u=f$ and $\left\langle u,v\right\rangle_{H^1(\Omega)}=0$ for all $v\in C_c^\infty(\Omega)$.
The symbol $u|_{\partial\Omega}$ in the formal problem \eqref{DO} stands for the restriction of $u$ to the boundary; in the context of weak solutions it is made rigorous through the trace condition.
For any $f\in \Hp$, the Dirichlet problem \eqref{DO} has a unique weak solution $u^f$; this is well known and immediate from \eqref{V1}. By the first line in \eqref{DO}, a solution $u^f$ is called \emph{$1$-harmonic} on $\Omega$. It is in $V_1(\Omega)$, which is the space of all elements of $H^1(\Omega)$ that are $1$-harmonic on $\Omega$.
 
We write $\Tr_i:=\TTr_i|_{V_1(\Omega)}$ for the restriction of $\TTr_i$ defined in Proposition \ref{P:trace} to $V_1(\Omega)$. The following theorem generalizes well-known results to the framework of $H^1$-admissible domains.

\begin{theorem}\label{Trisom}
Let $\Omega\subset \R^n$ be an $H^1$-admissible domain. Then the following statements are true:
\begin{enumerate}
\item[(i)] The space $H^1_0(\Omega)$ is the kernel of $\TTr_i$, that is, $H^1_0(\Omega)=\ker\TTr_i$.
\item[(ii)]  Endowed with the norm 
\begin{equation}\label{normTri}
\left\|f\right\|_{\Hp}:=\min\{ \left\|v\right\|_{H^1(\Omega)}|\ \text{$v\in H^1(\Omega)$ and $\TTr_i\:v=f$}\},
\end{equation}	
the space $\Hp$ is a Hilbert space. 
\item[(iii)] With respect to $\left\|\cdot\right\|_{\Hp}$, the trace operator trace $\TTr_i$ is bounded with operator norm one. Its restriction $\Tr_i:V_1(\Omega)\to \mathcal{B}(\partial\Omega)$ to $V_1(\Omega)$ is an isometry and onto.
\end{enumerate}
\end{theorem}

Details on statement (i) can be found in \ref{PSecBoundCase}, statements (ii) and (iii) are direct consequences.

\begin{remark}
By Theorem \ref{Trisom} the $1$-harmonic extension operator $\Tr_i^{-1}:\mathcal{B}(\partial\Omega)\to V_1(\Omega)$ is an isometry. 
For any $f\in \mathcal{B}(\partial\Omega)$ we have $u^f=\Tr_i^{-1}f$. 
\end{remark}

We write  $\Hm$ and $(H^1(\Omega))'$ for the dual spaces of $\Hp$ and $H^1(\Omega)$, and we use the notation 
\[\langle \cdot, \cdot \rangle_{\Hm,\Hp}\quad\text{and}\quad \langle \cdot, \cdot\rangle_{(H^1(\Omega))',H^1(\Omega)}\] 
for the corresponding dual pairings. 

\begin{remark}
Since $\Hp$ is a Hilbert space, for any $f\in \Hp$, the assignment $\iota(f)(h):=\left\langle f,h\right\rangle_{\Hp}$, $h\in \Hp$, defines an isometric isomorphism $\iota$ from $\Hp$ onto $\Hm$.
The dual pairing can be expressed as
$$\langle g, f \rangle_{\Hm,\B(\del \Omega)}=\langle \iota^{-1}(g),f\rangle_{\Hp}=\left\langle g,\iota(f)\right\rangle_{\Hm}, \;  f\in \Hp, \; g\in \Hm.$$
We may identify $\Hp$ with its image $\iota(\Hp)\subset \Hm$ under $\iota$.
\end{remark} 

\begin{remark}\label{R:extendbyzero}
If $V_1'(\Omega)$ denotes the dual of the closed subspace $V_1(\Omega)$ of $H^1(\Omega)$, then $(H^{1}(\Omega))'\subset V_1'(\Omega)$ by restriction. However, by the Riesz representation theorem, any $w\in V_1'(\Omega)$ is represented as $w=\left\langle v,\cdot\right\rangle_{H^1(\Omega)}$ with some suitable $v\in V_1(\Omega)$. The orthogonal decomposition \eqref{V1} then implies that $w$ automatically extends to a unique bounded linear functional $w'\in (H^{1}(\Omega))'$ on all of $H^1(\Omega)$ and zero on $H_0^1(\Omega)$, and that extension is an isometry, $\left\|w'\right\|_{(H^{1}(\Omega))'}=\left\|w\right\|_{V_1'(\Omega)}$. We agree to make silent use of this extension: we write $w$ to denote $w'$ and use $\left\|\cdot\right\|_{(H^{1}(\Omega))'}$ in place of $\left\|\cdot\right\|_{V_1'(\Omega)}$ on $V_1'(\Omega)$.
\end{remark}
By Remark \ref{R:extendbyzero}, the dual $\Tr_i^\ast:\Hm\to V_1'(\Omega)$ of $\Tr_i$ is seen to be characterized by 
\begin{equation}\label{DefDualTraceOp}
\langle g, \TTr_i v\rangle_{\Hm, \B(\del \Omega)}=\langle \Tr_i^\ast g, v\rangle_{(H^{1}(\Omega))',H^1(\Omega)}, \quad v\in H^1(\Omega), \quad  g\in \Hm.
 \end{equation}

\begin{corollary}\label{CorDualTraceOp}
Let $\Omega\subset \R^n$ be $H^1$-admissible. Then the operator $\Tr_i^\ast:\Hm\to V_1'(\Omega)$ is an isometry, $\left\|\Tr_i^\ast g\right\|_{(H^{1}(\Omega))'}=\left\|g\right\|_{\Hm}$, $g\in \Hm$, and
onto.
\end{corollary}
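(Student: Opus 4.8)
\textbf{Proof proposal for Corollary \ref{CorDualTraceOp}.}

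The plan is to derive the isometry and surjectivity of $\Tr_i^\ast$ directly from the corresponding properties of $\Tr_i$ established in Lemma \ref{Trisom} (iii), using the general principle that the Banach-space adjoint of a surjective isometry between Hilbert spaces is again a surjective isometry, and then tracking how the silent extension-by-zero convention of Remark \ref{R:extendbyzero} interacts with this. First I would recall from Lemma \ref{Trisom} (iii) that $\Tr_i:V_1(\Omega)\to\B(\bord)$ is an isometric isomorphism (bounded, isometric, and onto). Its Banach-space adjoint $\Tr_i^\ast:(\B(\bord))'\to (V_1(\Omega))'$ is then automatically an isometric isomorphism as well: surjectivity of $\Tr_i$ forces $\Tr_i^\ast$ to be injective, injectivity (indeed bijectivity) of $\Tr_i$ gives that $\Tr_i^\ast$ has dense range, and the norm identity $\norm{\Tr_i^\ast g}_{(V_1(\Omega))'}=\norm{g}_{\Hm}$ follows because for any $g\in\Hm$,
\[
\norm{\Tr_i^\ast g}_{(V_1(\Omega))'}=\sup_{\substack{v\in V_1(\Omega)\\ \norm{v}_{H^1(\Omega)}\le 1}}\abs{\langle g,\Tr_i v\rangle_{\Hm,\Hp}}=\sup_{\substack{f\in\Hp\\ \norm{f}_{\Hp}\le 1}}\abs{\langle g,f\rangle_{\Hm,\Hp}}=\norm{g}_{\Hm},
\]
where the middle equality uses precisely that $\Tr_i$ maps the closed unit ball of $V_1(\Omega)$ onto the closed unit ball of $\Hp$ (this is where isometry-and-onto is needed, not merely boundedness). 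Since $(V_1(\Omega))'$ is a Hilbert space, dense range plus the isometry property already yields that the range is closed and hence all of $(V_1(\Omega))'$, so $\Tr_i^\ast$ is onto.

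Next I would reconcile this with the statement as written, which asserts $\norm{\Tr_i^\ast g}_{(H^1(\Omega))'}=\norm{g}_{\Hm}$ with the inhomogeneous norm rather than $\norm{\cdot}_{(V_1(\Omega))'}$, and which defines $\Tr_i^\ast$ through \eqref{DefDualTraceOp} testing against all $v\in H^1(\Omega)$. Here one invokes Remark \ref{R:extendbyzero}: the functional $v\mapsto\langle g,\TTr_i v\rangle_{\Hm,\Hp}$ on $H^1(\Omega)$ vanishes on $H^1_0(\Omega)=\ker\TTr_i$ by Lemma \ref{Trisom} (i), so it is exactly the extension-by-zero of the functional $\Tr_i^\ast g\in(V_1(\Omega))'$ defined above; and that extension is norm-preserving, $\norm{\cdot}_{(H^1(\Omega))'}=\norm{\cdot}_{(V_1(\Omega))'}$ on $(V_1(\Omega))'$, by the orthogonal decomposition \eqref{V1}. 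Thus \eqref{DefDualTraceOp} is consistent, and the two norm statements coincide under the paper's convention.

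I do not expect a genuine obstacle here; the result is essentially a packaging of standard Hilbert-space duality. The only point requiring care — and the one I would state explicitly — is that the norm identity genuinely uses that $\Tr_i$ is an \emph{isometry onto} and not just bounded with norm one: it is the surjectivity onto the unit ball that lets one pass from the supremum over $v\in V_1(\Omega)$ to the supremum over $f\in\Hp$ in the displayed computation. A secondary bookkeeping point is to make sure the dual pairing in \eqref{DefDualTraceOp} really is the one induced by the identification $\Hm=(\B(\bord))'$ used throughout, so that $\langle g,\TTr_i v\rangle$ and $\langle g,f\rangle$ with $f=\Tr_i v$ agree; this is immediate from the definitions. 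Everything else is a routine application of the closed-range/adjoint dictionary for Hilbert spaces.
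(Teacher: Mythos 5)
Your proposal is correct and takes essentially the same route the paper intends: the corollary is stated without a separate proof precisely because it follows from Lemma \ref{Trisom} (iii), the characterization \eqref{DefDualTraceOp} and the norm-preserving extension by zero of Remark \ref{R:extendbyzero}, which is exactly the duality argument you spell out. Your explicit observation that passing from the supremum over the unit ball of $V_1(\Omega)$ to that of $\Hp$ uses that $\Tr_i$ is an isometry \emph{onto} is a sound clarification, not a deviation.
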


Now let $\Omega\subset \R^n$ be an $\dot{H}^1$-admissible domain. The space
\begin{equation}\label{VlOr}
\dot{V}_{0}(\Omega):=\bigg\{u\in \dot{H}^1(\Omega)\ \bigg|\  \text{$\int_\Omega \nabla u\cdot\nabla v~\dx=0$ for all $v\in C_c^\infty(\Omega)$}\bigg\}
\end{equation}
is a closed subspace of $\dot{H}^1(\Omega)$.

Given $f\in \Hpo$, we call an element $u$ of $\dot{H}^1(\Omega)$ a \emph{weak solution in the $\dot{H}^1$-sense} of the Dirichlet problem
\begin{equation}\label{DOo}
\begin{cases}
-\Delta u&=0\quad \text{in $\Omega$}\\
u|_{\partial\Omega}&=f
\end{cases}
\end{equation}
if $\dTTr_i u=f$ and $\left\langle u,v\right\rangle_{\dot{H}^1(\Omega)}=0$ for all $v\in C_c^\infty(\Omega)$. For any $f\in \Hpo$ the Dirichlet problem \eqref{DOo} has a unique weak solution $u^f$ in the $\dot{H}^1$-sense; it is an element of the space $\dot{V}_0(\Omega)$ of all elements of $\dot{H}^1(\Omega)$ \emph{harmonic} on $\Omega$.

We write $\dTr_i:=\dTTr_i|_{\dot{V}_0(\Omega)}$ for the restriction of $\dTTr_i$ to $\dot{V}_0(\Omega)$. The following counterpart of Theorem \ref{Trisom} holds; a proof of (i) is given in \ref{PSecBoundCase}.

\begin{theorem}\label{Tr0isom}
Let $\Omega\subset \R^n$ be an $\dot{H}^1$-admissible domain. Then the following assertions hold:
\begin{enumerate}
\item[(i)] The kernel $\ker \dTTr_i|_{\dot{H}^1(\Omega)}$ is the orthogonal complement $(\dot{V}_0(\Omega))^\bot$ of the space $\dot{V}_0(\Omega)$ in $\dot{H}^1(\Omega)$.
\item[(ii)]  Endowed with the norm
$$\left\|f\right\|_{\Hpo}:=\min\{ \left\|v\right\|_{\dot{H}^1(\Omega)}\big|\ \text{$v\in \dot{H}^1(\Omega)$ and $\dTTr_i\:v=f$}\},$$
the space $\Hpo$ is a Hilbert space. 
\item[(iii)] With respect to $\left\|\cdot\right\|_{\Hpo}$, the operator $\dTTr_i$ is bounded with operator norm one. 
Its restriction $\dTr_i:\dot{V}_0(\Omega)\to \Hpo$ to $\dot{V}_0(\Omega)$ is an isometry and~onto.
\end{enumerate}
\end{theorem}

\begin{remark}
By Theorem \ref{Tr0isom} the harmonic extension operator $\dTr_i^{-1}$ is an isometry. For any $f\in \dot{\mathcal{B}}(\partial\Omega)$ we have 
$u^f=\dTr_i^{-1}f\in \dot{V}_0(\Omega)$. 
\end{remark}

We write $(\dot{H}^{1}(\Omega))'$, $\dot{V}_0'(\Omega)$ and $\Hmo$ for the dual spaces of $\dot{H}^1(\Omega)$, $\dot{V}_0(\Omega)$ and $\Hpo$ respectively. With a similar agreement as in Remark \ref{R:extendbyzero}, the dual $\dTr_i^\ast:\Hmo\to \dot{V}_0'(\Omega)$ of $\dTr_i$ is now seen to be  characterized by 
\[\langle g, \dTTr_i v\rangle_{\Hmo,\Hpo}=\langle \dTr_i^\ast g, v\rangle_{(\dot{H}^{1}(\Omega))',\dot{H}^1(\Omega)}, \quad v\in \dot{H}^1(\Omega), \quad  g\in \Hmo.\]

\begin{corollary}\label{ThIsomTrAdj}
Let $\Omega\subset \R^n$ be $\dot{H}^1$-admissible. Then the operator $\dTr_i^\ast:\Hmo\to \dot{V}_0'(\Omega)$ is an isometry, $\big\|\dTr_i^\ast g\big\|_{(\dot{H}^{1}(\Omega))'}=\left\|g\right\|_{\Hmo}$, $g\in \Hmo$, and
onto. 
\end{corollary}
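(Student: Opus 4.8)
The plan is to mirror the proof of Corollary~\ref{CorDualTraceOp}, since the homogeneous statement is the exact analogue of the inhomogeneous one with $H^1$ replaced by $\dot{H}^1$ and the orthogonal decomposition $H^1(\Omega)=H^1_0(\Omega)\oplus V_1(\Omega)$ replaced by $\dot{H}^1(\Omega)=(\dot{V}_0(\Omega))^\bot\oplus \dot{V}_0(\Omega)$, which is exactly the content of Lemma~\ref{Tr0isom}(i). First I would recall that, by the same reasoning as in Remark~\ref{R:extendbyzero} applied to the closed subspace $\dot{V}_0(\Omega)\subset\dot{H}^1(\Omega)$, every $w\in(\dot{V}_0(\Omega))'$ extends isometrically to a unique $w'\in(\dot{H}^1(\Omega))'$ that vanishes on $(\dot{V}_0(\Omega))^\bot$; this uses the Riesz representation $w=\langle v,\cdot\rangle_{\dot{H}^1(\Omega)}$ with $v\in\dot{V}_0(\Omega)$ together with the orthogonal splitting, and it justifies the stated identification of norms $\|\cdot\|_{(\dot{V}_0(\Omega))'}=\|\cdot\|_{(\dot{H}^1(\Omega))'}$.

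Next I would argue directly from the defining relation of $\dTr_i^\ast$, namely
\[
\langle g, \dTTr_i v\rangle_{\Hmo,\Hpo}=\langle \dTr_i^\ast g, v\rangle_{(\dot{H}^{1}(\Omega))',\dot{H}^1(\Omega)}, \quad v\in \dot{H}^1(\Omega),\ g\in \Hmo.
\]
By Lemma~\ref{Tr0isom}(iii) the restriction $\dTr_i:\dot{V}_0(\Omega)\to\Hpo$ is an onto isometry, hence a unitary identification of Hilbert spaces; its Hilbert-space adjoint is therefore also an onto isometry $\Hmo\to(\dot{V}_0(\Omega))'$ (identifying $\Hmo$ with $\Hpo$ and $(\dot{V}_0(\Omega))'$ with $\dot{V}_0(\Omega)$ via Riesz, this is just the statement that the adjoint of a unitary is unitary). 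It remains to check that $\dTr_i^\ast g$, viewed via the extension-by-zero convention as an element of $(\dot{H}^1(\Omega))'$, agrees with the adjoint of the restriction $\dTr_i$: for $v\in(\dot{V}_0(\Omega))^\bot$ one has $\dTTr_i v=0$ by Lemma~\ref{Tr0isom}(i), so the left-hand side above vanishes, meaning $\dTr_i^\ast g$ indeed annihilates $(\dot{V}_0(\Omega))^\bot$ and is the extension by zero of the restriction-adjoint. Combining the two facts gives that $\dTr_i^\ast$ is an isometry onto $(\dot{V}_0(\Omega))'$ with the asserted norm identity.

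The only point requiring genuine care — rather than a routine transcription — is the boundedness/closedness package underlying the Riesz step in the homogeneous setting: one must know that $\dot{H}^1(\Omega)$ is a genuine Hilbert space (true since $\Omega$ is an $\dot H^1$-extension domain, so $\dot H^1(\Omega)$ inherits completeness from $\dot H^1(\R^n)$) and that $\dot{V}_0(\Omega)$ is closed in it, which is already asserted right after \eqref{VlOr}. Given these, the argument is identical to the inhomogeneous case; I would simply write "the proof is identical to that of Corollary~\ref{CorDualTraceOp}, with Lemma~\ref{Trisom} replaced by Lemma~\ref{Tr0isom} and Remark~\ref{R:extendbyzero} applied to $\dot{V}_0(\Omega)\subset\dot{H}^1(\Omega)$," and perhaps spell out the extension-by-zero remark for the homogeneous spaces if it has not been recorded earlier. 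No substantial obstacle is expected; the bookkeeping of the two Riesz identifications (one on the trace-space side, one on the $\dot{H}^1$ side) is the most error-prone part.
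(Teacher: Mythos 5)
Your argument is correct and is essentially the paper's own route: the paper obtains this corollary directly from Lemma \ref{Tr0isom} (the restricted trace $\dTr_i:\dot{V}_0(\Omega)\to\Hpo$ is an onto isometry, with $\ker\dTTr_i=(\dot{V}_0(\Omega))^\bot$) together with the extension-by-zero agreement of Remark \ref{R:extendbyzero} transplanted to $\dot{V}_0(\Omega)\subset\dot{H}^1(\Omega)$, exactly as you describe. The only superfluous point is your aside on completeness of $\dot{H}^1(\Omega)$: this is a standard fact about the quotient space (as in the references cited in Section \ref{SS:admissible}) and does not require the extension property, but this does not affect the proof.
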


\begin{remark}\label{R:representative}\mbox{}
\begin{enumerate}
\item[(i)] If $\Omega$ is $H^1$-admissible and bounded, and $f\in \Hp$, then $w\in H^1(\Omega)$ is called a weak solution to \eqref{DOo} if  $\TTr_i w=f$ and $\left\langle w,v\right\rangle_{\dot{H}^1(\Omega)}=0$ for all $v\in C_c^\infty(\Omega)$. It is well known that for any $f\in \Hp$, there is a unique weak solution $w^f$ in this sense of \eqref{DOo} and that $w^f$ is an element of the space $V_0(\Omega)$ of all $u\in H^1(\Omega)$ with $\left\langle u,v\right\rangle_{\dot{H}^1(\Omega)}=0$ for all $v\in H^1_0(\Omega)$. 
\item[(ii)] If $\Omega$ is $H^1$-admissible, $\dot{H}^1$-admissible and bounded, and $f\in \Hp$, then the equivalence class $[f]$ of $f$ modulo constants is in $\Hpo$, cf. Remark \ref{R:trace} (ii), and the unique weak solution $u^{[f]} \in \dot{H}^1(\Omega)$ in the $\dot{H}^1$-sense of \eqref{DOo} with this class $[f]$ in place of $f$ contains exactly one representative $w\in u^{[f]}$ modulo constants such that $\widetilde{w}|_{\partial\Omega}=f$ q.e. This particular representative $w$ is exactly the $w^f$ from (i).
\item[(iii)] In \cite{SIMADER-SOHR-1996} a different type of function spaces based on \cite{DENY-LIONS-1954} was used in order to handle Dirichlet boundary conditions for unbounded domains without losing constants. However, to avoid technicalities in later sections, we decided to accept a loss of constants. 
\end{enumerate}
\end{remark}

\begin{remark}
Recall Remark \ref{R:multiplecomponents}. Consider the more general situation where $\Omega=\Omega_1\cup...\cup\Omega_N$ with $\dot{H}^1$-admissible $\Omega_j$ having mutually disjoint closures. Then elements of $\dot{V}_0(\Omega)$, defined as in  \eqref{VlOr}, are classes modulo locally constant functions. Given a class $f\in \dot{\mathcal{B}}(\partial\Omega)$ modulo locally constant functions as in Remark \ref{R:multiplecomponents}, the Dirichlet problem (\ref{DOo}) has again a unique weak solution $u^f$ in the $\dot{H}^1$-sense, and clearly $u^f\in \dot{V}_0(\Omega)$.

If all $\Omega_j$ are bounded and $f\in \mathcal{B}(\partial\Omega)$, then there is a unique weak solution $w^f$ as in Remark \ref{R:representative} (i). If $[f]$ denotes the class of $f$ in $\dot{\mathcal{B}}(\partial\Omega)$, then $w^f$ is the unique representative modulo locally constant functions in $u^{[f]}$ that coincides with $f$ q.e. on all $\partial\Omega_j$.
\end{remark}

\subsection{Neumann solutions}\label{Subsec:N-Sol}

Suppose $\Omega\subset \R^n$ is an $H^1$-admissible domain. Given $g\in \Hm$, we call $u\in H^1(\Omega)$ a \emph{weak solution} of the Neumann problem 
\begin{equation}\label{SO}
\begin{cases}
-\Delta u+u &= 0\quad \text{in $\Omega$}\\
\qquad\frac{\partial u}{\partial\nu}|_{\bord} &= g
\end{cases}
\end{equation}
if for all $v\in H^1(\Omega)$, we have
\begin{equation}\label{Neumansol}
\langle u,v\rangle_{H^1(\Omega)}=\langle g, \TTr_i v\rangle_{\Hm,\Hp}.
\end{equation} 
The symbol $\frac{\partial u}{\partial\nu}$ in the formal problem \eqref{SO} stands for the (interior) normal derivative; in the context of weak solutions it is implicitly made rigorous by \eqref{Neumansol}. It is well known and easily seen from the Riesz representation theorem that for any $g\in \Hm$ the Neumann problem \eqref{SO} has a unique weak solution $u_g$; it is an element of $V_1(\Omega)$. We write $\mathscr{N}_1:\Hm\to V_1(\Omega)$, $\mathscr{N}_1g:=u_g$ for the linear operator mapping a given element $g$ of $\Hm$ to the unique weak solution  of \eqref{SO}. By \eqref{DefDualTraceOp} we have 
\[\langle \mathscr{N}_1g,v\rangle_{H^1(\Omega)}=\langle \Tr_i^\ast g, v\rangle_{(H^{1}(\Omega))',H^1(\Omega)},\quad v\in H^1(\Omega),\]
hence $\|\mathscr{N}_1g\|_{H^1(\Omega)}=\|\Tr_i^* g\|_{(H^{1}(\Omega))'}= \|g\|_{\Hm}$, $g\in \Hm$, that is, $\mathscr{N}_1$ is an isometry.

\begin{corollary}
Let $\Omega$ be $H^1$-admissible. The linear operator $\Tr_i\circ\,\Nscr_1: \Hm\to \Hp$ is an isometry and onto. It satisfies, for $g,h\in\Hm$, 
\[\left\langle g, \Tr_i\circ\,\Nscr_1 h\right\rangle_{\Hm,\,\Hp}=\left\langle g, h\right\rangle_{\Hm} = \left\langle h, \Tr_i\circ\,\Nscr_1 g\right\rangle_{\Hm,\,\Hp}.\] 
\end{corollary}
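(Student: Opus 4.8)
The plan is to derive everything from the two isometry facts already established, namely that $\Tr_i:V_1(\Omega)\to\Hp$ is an isometry onto (Lemma \ref{Trisom} (iii)) and that $\SO_1:\Hm\to V_1(\Omega)$ is an isometry onto (the discussion preceding the corollary). Since a composition of isometries onto is an isometry onto, the operator $\Tr_i\circ\SO_1:\Hm\to\Hp$ is immediately an isometry and onto; that disposes of the first assertion with essentially no work.

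For the identity, first I would compute $\langle g,\Tr_i\circ\SO_1 h\rangle_{\Hm,\Hp}$. By the defining relation \eqref{DefDualTraceOp} of $\Tr_i^\ast$, applied with $v=\SO_1 h\in H^1(\Omega)$, this equals $\langle \Tr_i^\ast g,\SO_1 h\rangle_{(H^1(\Omega))',H^1(\Omega)}$. Now I would use the variational characterization \eqref{Neumansol} of $\SO_1 h=u_h$, or equivalently the displayed formula $\langle\SO_1 g,v\rangle_{H^1(\Omega)}=\langle\Tr_i^\ast g,v\rangle_{(H^1(\Omega))',H^1(\Omega)}$ stated just above the corollary: taking $v=\SO_1 h$ gives $\langle\Tr_i^\ast g,\SO_1 h\rangle_{(H^1(\Omega))',H^1(\Omega)}=\langle\SO_1 g,\SO_1 h\rangle_{H^1(\Omega)}$. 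This last expression is manifestly symmetric in $g$ and $h$, which already yields the second equality in the claim (swap $g$ and $h$ and read the chain backwards). To get the middle term, I would invoke that $\SO_1$ is an isometry: polarization of the isometry identity $\|\SO_1 g\|_{H^1(\Omega)}=\|g\|_{\Hm}$ gives $\langle\SO_1 g,\SO_1 h\rangle_{H^1(\Omega)}=\langle g,h\rangle_{\Hm}$, where the right-hand inner product is the Hilbert space inner product on $\Hm$ dual to that on $\Hp$. Hence $\langle g,\Tr_i\circ\SO_1 h\rangle_{\Hm,\Hp}=\langle g,h\rangle_{\Hm}=\langle h,\Tr_i\circ\SO_1 g\rangle_{\Hm,\Hp}$, as desired.

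There is no real obstacle here; the only point that needs a moment's care is the bookkeeping of which pairing is which — distinguishing the dual pairing $\langle\cdot,\cdot\rangle_{\Hm,\Hp}$, the dual pairing $\langle\cdot,\cdot\rangle_{(H^1(\Omega))',H^1(\Omega)}$, the Hilbert inner product $\langle\cdot,\cdot\rangle_{H^1(\Omega)}$, and the Hilbert inner product $\langle\cdot,\cdot\rangle_{\Hm}$ — and checking that the polarization step is legitimate because $\SO_1$ is a real-linear (in fact linear) isometry between Hilbert spaces, so it preserves inner products, not merely norms. One should also note that $\Tr_i\circ\SO_1$ maps into $\Hp$ and is well-defined because $\SO_1 h\in V_1(\Omega)$, the domain on which $\Tr_i$ (as opposed to $\TTr_i$) is defined; this is why the statement is phrased with $\Tr_i$ rather than $\TTr_i$, though on $V_1(\Omega)$ they agree. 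With these identifications in place the proof is a two-line chain of equalities.
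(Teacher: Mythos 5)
Your proof is correct and follows essentially the same route as the paper: the isometry/surjectivity is obtained by composing the two isometries, and the identity comes from the chain $\langle g,\Tr_i\circ\,\SO_1 h\rangle_{\Hm,\Hp}=\langle\Tr_i^\ast g,\SO_1 h\rangle_{(H^1(\Omega))',H^1(\Omega)}=\langle\SO_1 g,\SO_1 h\rangle_{H^1(\Omega)}$, whose symmetry gives the outer equality. Your explicit polarization step for the middle term $\langle g,h\rangle_{\Hm}$ is a point the paper leaves implicit, and it is handled correctly.
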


\begin{proof}
 For the first statement, the polarization identity gives, for all $g,h\in\Hm$,
\begin{equation*}
\langle g,h\rangle_{\Hm}=\langle\Nscr_1 g,\Nscr_1 h\rangle_{H^1(\Omega)},
\end{equation*}
and the result follows by \eqref{Neumansol}.
The second statement follows since for all $g,h\in \Hm$ we have 
\begin{multline}
\langle g, \Tr_i\circ\,\Nscr_1 h\rangle_{\Hm,\,\Hp} = \langle g, \Tr_i u_h\rangle_{\Hm,\,\Hp} \notag\\
     = \langle\Tr_i^*g, u_h\rangle_{(H^{1}(\Omega))',H^1(\Omega)} = \langle u_g,u_h\rangle_{H^1(\Omega)}.
\end{multline}
\end{proof}

Now suppose that $\Omega$ is $\dot{H}^1$-admissible. Given $g\in \Hmo$, we call $u\in \dot{H}^1(\Omega)$ a \emph{weak solution in the $\dot{H}^1$-sense} of the Neumann problem 
\begin{equation}\label{SOo}
\begin{cases}
-\Delta u &= 0\quad \text{in $\Omega$}\\
\frac{\partial u}{\partial\nu}|_{\bord}&= g
\end{cases}
\end{equation}
if for all $v\in \dot{H}^1(\Omega)$ we have
\begin{equation}\label{Neumansolo}
\langle u,v\rangle_{\dot{H}^1(\Omega)}=\langle g, \dTTr_i v\rangle_{\Hmo,\,\Hpo}.
\end{equation} 
For any $g\in \Hmo$ the Neumann problem \eqref{SOo} has a unique weak solution $u_g$ in the $\dot{H}^1$-sense, and $u_g$ is an element of $\dot{V}_0(\Omega)$ defined in \eqref{VlOr}. We write $$\dot{\mathscr{N}}_0:\Hmo\to \dot{V}_0(\Omega), \quad \dot{\mathscr{N}}_0g:=u_g,$$ for the linear operator mapping a given element of $\Hmo$ to $u_g$. 
As before, we see that $\dot{\mathscr{N}}_0$ is an isometry: $$\|\dot{\mathscr{N}}_0g\|_{\dot{H}^1(\Omega)}=\|\dTr_i^* g\|_{(\dot{H}^1(\Omega))'}= \|g\|_{\Hmo}, \quad g\in \Hmo.$$

\begin{corollary}
Let $\Omega$ be $\dot{H}^1$-admissible. Then the linear operator $\Tr_i\circ\,\NOt_0: \Hmo\to \Hpo$ is an isometry and onto. It satisfies, for $g,h\in\Hmo$, 
\[\big\langle g, \dTr_i\circ\,\NOt_0 h\big\rangle_{\Hmo,\,\Hpo}=\left\langle g, h\right\rangle_{\Hmo} = \big\langle h, \dTr_i\circ\,\NOt_0 g\big\rangle_{\Hmo,\,\Hpo}.\] 
\end{corollary}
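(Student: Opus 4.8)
The plan is to mirror the proof of the corollary on $\Tr_i\circ\SO_1$ just above, working now with the homogeneous spaces. For the first assertion I would observe that $\dTr_i\circ\SOt_0$ is a composition of two surjective isometries: by Lemma~\ref{Tr0isom}(iii) the restricted trace $\dTr_i\colon\dot{V}_0(\Omega)\to\Hpo$ is an isometry onto, and $\SOt_0\colon\Hmo\to\dot{V}_0(\Omega)$ is an isometry by the computation recorded above, namely $\|\SOt_0 g\|_{\dot H^1(\Omega)}=\|\dTr_i^\ast g\|_{(\dot H^1(\Omega))'}=\|g\|_{\Hmo}$. To see that $\SOt_0$ is onto $\dot{V}_0(\Omega)$ I would argue exactly as in the inhomogeneous case: given $u\in\dot{V}_0(\Omega)$, the bounded functional $v\mapsto\langle u,v\rangle_{\dot H^1(\Omega)}$ on $\dot{V}_0(\Omega)$ equals $\dTr_i^\ast g$ for a unique $g\in\Hmo$ by Corollary~\ref{ThIsomTrAdj}; then $\SOt_0 g$ and $u$ both solve \eqref{Neumansolo}, and since their difference lies in $\dot{V}_0(\Omega)$ and is orthogonal to $\dot{V}_0(\Omega)$, they coincide.

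For the bilinear identities I would fix $g,h\in\Hmo$, put $u_g=\SOt_0 g$ and $u_h=\SOt_0 h$, both lying in $\dot{V}_0(\Omega)\subset\dot{H}^1(\Omega)$, and use that $\dTr_i$ is the restriction of $\dTTr_i$ to $\dot{V}_0(\Omega)$. Feeding the admissible test function $v=u_h$ into the defining relation \eqref{Neumansolo} for $u_g$, and using the characterization of $\dTr_i^\ast$, gives
\[
\langle g,\dTr_i\SOt_0 h\rangle_{\Hmo,\Hpo}
=\langle g,\dTTr_i u_h\rangle_{\Hmo,\Hpo}
=\langle \dTr_i^\ast g,u_h\rangle_{(\dot H^1(\Omega))',\dot H^1(\Omega)}
=\langle u_g,u_h\rangle_{\dot H^1(\Omega)}.
\]
The right-hand side is symmetric under $g\leftrightarrow h$, which yields the outer equality $\langle g,\dTr_i\SOt_0 h\rangle_{\Hmo,\Hpo}=\langle h,\dTr_i\SOt_0 g\rangle_{\Hmo,\Hpo}$. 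Polarizing the isometry identity $\|\SOt_0 g\|_{\dot H^1(\Omega)}=\|g\|_{\Hmo}$ then gives $\langle u_g,u_h\rangle_{\dot H^1(\Omega)}=\langle g,h\rangle_{\Hmo}$, the remaining equality.

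I do not expect any genuine obstacle: the statement is the exact homogeneous counterpart of the corollary proved just above, and all ingredients — Lemma~\ref{Tr0isom}, Corollary~\ref{ThIsomTrAdj}, the defining relation of $\dTr_i^\ast$, the isometry property of $\SOt_0$, and uniqueness of weak $\dot H^1$-solutions of \eqref{SOo} — are already available. The only points deserving a little care are bookkeeping ones: verifying that $\SOt_0$ maps \emph{onto} $\dot{V}_0(\Omega)$ (which is where Corollary~\ref{ThIsomTrAdj} and the extension-by-zero convention enter), and noting that $u_h\in\dot{V}_0(\Omega)$ is a legitimate test function in \eqref{Neumansolo}, so that $\dTr_i\SOt_0 h$ and $\dTTr_i u_h$ may be identified.
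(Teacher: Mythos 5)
Your proof is correct and follows exactly the route the paper intends: the paper proves the inhomogeneous corollary for $\Tr_i\circ\,\SO_1$ via the chain $\langle g,\Tr_i u_h\rangle_{\Hm,\Hp}=\langle \Tr_i^\ast g,u_h\rangle_{(H^1(\Omega))',H^1(\Omega)}=\langle u_g,u_h\rangle_{H^1(\Omega)}$ together with the isometry of the Neumann solution operator, and leaves the homogeneous case to the same argument, which is precisely what you carry out with \eqref{Neumansolo}, Lemma~\ref{Tr0isom}(iii) and Corollary~\ref{ThIsomTrAdj}. Your extra care about surjectivity of $\SOt_0$ onto $\dot{V}_0(\Omega)$ and the admissibility of the test function $v=u_h$ is sound and consistent with the paper's conventions.
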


\subsection{Normal derivatives}\label{SS:normal}

Abstract normal derivatives have been defined and studied by various authors in different contexts, see for instance \cite[p. 218]{CONSTANTINESCU-CORNEA-1963}, \cite[Section 3.2]{LEJAN-1978} and \cite{LANCIA-2002}. We formulate a variant of those definitions which suits our purposes. 

Suppose $\Omega\subset\R^n$ is $H^1$-admissible. Let 
\begin{equation}\label{H1D}
H^1_\Delta(\Omega):=\big\{u\in H^1(\Omega)\;\big|\;\Delta u\in L^2(\Omega)\big\};
\end{equation}
here $\Delta u$ is understood in distributional sense. Clearly, $V_1(\Omega)\subset H^1_\Delta(\Omega)$. Given $u\in H^1_\Delta(\Omega)$, there is a unique element $g$ of $\Hm$ such that 
\begin{equation}\label{EqGreenInt}
\langle g,\TTr_i v\rangle_{\Hm,\,\Hp}= \int_\Omega{(\Delta u)v\,\dx}+\int_\Omega \nabla u\cdot\nabla v\,\dx,\quad v\in H^1(\Omega);
\end{equation}
note that the right-hand side defines a bounded linear functional on $H^1(\Omega)$ and recall that  $\TTr_i$ is surjective. We call this element $g$ the \emph{weak interior normal derivative} of $u$ (with respect to $\Omega$) and denote it by $\frac{\partial_i u}{\partial\nu} :=g$. The operator $\frac{\partial_i}{\partial\nu}:H^1_\Delta(\Omega)\to \Hm$ is linear and bounded in the sense that 
\begin{equation}\label{E:boundnormalder}
\big\| \frac{\partial_i u}{\partial\nu}\big\|_{\Hm}\leq \|u\|_{H^1(\Omega)}+\|\Delta u\|_{L^2(\Omega)}.
\end{equation}

We write $\del_{\nu,i}:=\left.\frac{\partial_i}{\partial\nu}\right|_{V_1(\Omega)}$ for the restriction of $\frac{\partial_i}{\partial\nu}$ to $V_1(\Omega)$.

\begin{corollary}\label{C:Di}
Let $\Omega$ be $H^1$-admissible. Then the following assertions hold:
\begin{enumerate}
\item[(i)] Both $\Nscr_1:\Hm\to V_1(\Omega)$ and the operator 
$\del_{\nu,i}: V_1(\Omega)\to \Hm$ are isometries and onto, and  $\del_{\nu,i}=\Nscr^{-1}_1$.
\item[(ii)] For any $u,v\in V_1(\Omega)$ we have
\begin{equation}\label{E:secondGreen}
\big\langle \partial_{\nu,i} u,\Tr_i v\big\rangle_{\Hm,\Hp}=\left\langle u,v\right\rangle_{H^1(\Omega)}=\big\langle  \partial_{\nu,i} v,\Tr_i u\big\rangle_{\Hm,\Hp}.
\end{equation}
\item[(iii)] The dual $\big(\del_{\nu,i}\big)^\ast:\Hp\to V_1'(\Omega)$ of $\del_{\nu,i}$ is an isometry and onto.
\end{enumerate}
\end{corollary}

\begin{proof}
Statement (i) follows using the surjectivity of $\Tr_i$ observed in Theorem \ref{Trisom} (iii) and a comparison of \eqref{Neumansol} and \eqref{EqGreenInt}. Statement (ii) is a special case of (\ref{EqGreenInt}), and (iii) is a consequence of (i).
\end{proof}

Now suppose that $\Omega\subset\R^n$ is $\dot{H}^1$-admissible. We then consider the space 
\[\dot{H}^1_{\Delta}(\Omega):=\{u\in \dot{H}^1(\Omega)\;|\;\Delta u\in L^2(\Omega)\},\]
which contains $\dot{V}_0(\Omega)$. Given $u\in \dot{H}^1_{\Delta}(\Omega)$, there is a unique element $g$ of $\Hmo$ such that 
\begin{equation}\label{EqGreenInto}
\langle g,\dTTr_i v\rangle_{\Hmo,\Hpo}= \int_{\Omega}{(\Delta u)v\,\dx}+\int_{\Omega}{\nabla u\cdot\nabla v\,\dx},\quad v\in \dot{H}^1(\Omega),
\end{equation}
and we call also $ \frac{\dot{\partial}_i u}{\partial\nu}:=g$ the \emph{weak interior normal derivative of $u$ in the $\dot{H}^1$-sense} (with respect to $\Omega$). The operator $ \frac{\dot{\partial}_i }{\partial\nu}:\dot{H}^1_{\Delta}(\Omega)\to \Hmo$ is linear and 
\begin{equation}\label{E:boundnormaldero}
\big\| \frac{\dot{\partial}_i u}{\partial\nu}\big\|_{\Hm}\leq \|u\|_{\dot{H}^1(\Omega)}+\|\Delta u\|_{L^2(\Omega)}.
\end{equation}

We write $\dot{\del}_{\nu,i}:=\frac{\dot{\partial}_i}{\partial\nu}\big|_{\dot{V}_0(\Omega)}$ for the restriction of $\frac{\dot{\partial}_i}{\partial\nu}$ to $\dot{V}_0(\Omega)$.

\begin{remark}\label{R:BprimeB}
Suppose that $\Omega$ is $H^1$-admissible, $\dot{H}^1$-admissible and bounded. Then by Remark \ref{R:trace} (ii), the spaces $\Hp$ and $\Hpo\oplus \R$ can be identified. Each bounded linear functional $g\in\Hmo$ on $\Hpo$ induces a linear functional on $\Hp$ through extension by zero on $\R$. In this sense, an equality of two elements of $\Hmo$, such as the Neumann boundary condition
$ \dot{\partial}_{\nu,i}(\NOt_0g)=g$ for $g\in \Hmo$, may be seen as an equality of two linear functionals on $\Hp$. 
\end{remark}

\begin{corollary}\label{C:Dio}
Let $\Omega$ be $\dot{H}^1$-admissible. Then:
\begin{enumerate}
\item[(i)] Both $\NOt_0:\Hpo\to \dot{V}_0(\Omega)$ and the operator 
$\dot{\partial}_{\nu,i}:\dot{V}_0(\Omega)\to \Hmo$ are isometries and onto, and  $\dot{\partial}_{\nu,i}=\NOt^{-1}_0$. 
\item[(ii)] For any $u,v\in \dot{V}_0(\Omega)$ we have
\begin{equation}\label{E:secondGreeno}
\big\langle  \dot{\partial}_{\nu,i}u,\dTr_i v\big\rangle_{\Hmo,\Hpo}=\left\langle u,v\right\rangle_{\dot{H}^1(\Omega)}=\big\langle  \dot{\partial}_{\nu,i}v,\dTr_i u\big\rangle_{\Hmo,\Hpo}.
\end{equation}
\item[(iii)] The dual $\big(\dot{\partial}_{\nu,i}\big)^\ast:\Hpo\to \dot{V}_0'(\Omega)$ of $\dot{\partial}_{\nu,i}$ is an isometry and onto.
\end{enumerate}
\end{corollary}

\begin{remark}
Weak normal derivatives can also be used to characterize weak solutions of \eqref{DO}: an element $u$ of $H^1(\Omega)$ is a weak solution to \eqref{DO} if and only if $\left\langle u,v\right\rangle_{H^1(\Omega)}=0$ for all $v\in C_c^\infty(\Omega)$ and $\left\langle v,u\right\rangle_{H^1(\Omega)}=\left\langle  \partial_{\nu,i} v,f\right\rangle_{\Hm, \Hp}$ for all $v\in V_1(\Omega)$. Similarly for \eqref{DOo}.
\end{remark}

\begin{remark}\label{R:normalderLip}
Recall Remark \ref{R:boundary_spaces}. Suppose that $\Omega$ is a bounded Lipschitz domain. Then for any $u\in H^1_\Delta(\Omega)$ its weak interior normal derivative $\frac{\partial_i u}{\partial\nu}$ is defined as the unique element of $H^{-1/2}(\partial\Omega)$ satisfying 
(\ref{EqGreenInt}) in place of $g$; in this case the dual pairing $\langle \cdot,\cdot\rangle_{\Hm,\,\Hp}$ is the dual pairing 
 $\left\langle\cdot,\cdot\right\rangle_{H^{-1/2}(\Omega),H^{1/2}(\Omega)}$. See for instance \cite[Section VII.1, Lemma 1]{DUTRAY-LIONS1988}. The situation is similar for $u\in \dot{H}^1_{\Delta}(\Omega)$; in this case $\frac{\dot{\partial}_i u}{\partial\nu}\in \dot{H}^{-1/2}(\partial\Omega)$.
 
If $u\in H^2(\Omega)$, cf. \cite[p. 2]{GIRAULT-1986}, then the distributional partial derivatives $\frac{\partial u}{\partial x_i}$ are in $H^1(\Omega)$ and
\[\frac{\partial_i u}{\partial\nu}=\sum_{i=1}^n\TTr_i\big(\frac{\partial u}{\partial x_i}\big)\nu_i\] 
is an element of $L^2(\partial\Omega,\sigma)$; here $\sigma$ is the surface measure on $\partial\Omega$ and 
$\nu=(\nu_1,...,\nu_n)$ is the $\sigma$-a.e. defined outward unit normal on $\partial\Omega$. See \cite[Section I.1, Lemma 1.4]{GIRAULT-1986}.
\end{remark}

\section{Poincar\'e-Steklov operators for admissible domains}\label{Sec:P-S}

We provide some comments on Poincar\'e-Steklov operators for bounded admissible domains. 

Let $\Omega\subset \R^n$ be a bounded $H^1$-admissible domain. We write $\Delta_D$ for the self-adjoint Dirichlet Laplacian on $L^2(\Omega)$ and denote its spectrum by $\sigma(\Delta_D)$. Recall that $\sigma(\Delta_D)\subset (-\infty,0)$ is pure point with eigenvalues accumulating at minus infinity. 

Given $k\in\R$ and $f\in \Hp$, we call $u\in H^1(\Omega)$ a \emph{weak solution} of the Dirichlet problem 
\begin{equation}\label{DOk}
\begin{cases}
-\Delta u+ku&=0\quad \text{in $\Omega$}\\
\qquad u|_{\partial\Omega}&=f
\end{cases}
\end{equation}
if $\TTr_i u=f$ and $\left\langle u,v\right\rangle_{\dot{H}^1(\Omega)}+k\left\langle u,v\right\rangle_{L^2(\Omega)}=0$ for all $v\in C_c^\infty(\Omega)$. This corresponds to \eqref{DO} with $-\Delta u+k u$ in place of $-\Delta u+u$; problem \eqref{DO} is the special case for $k=1$.
 
If $k\in \R\backslash \sigma(\Delta_D)$, then for any $f\in \Hp$ there is a unique weak solution $u_f\in H_\Delta(\Omega)$ of \eqref{DOk}. For such $k$ one can define a linear operator $\Ascr_k: \Hp \to \Hm$ by 
\[\Ascr_k f:=\ddny{u_f}{i}.\]
This operator is called the \emph{Poincar\'e-Steklov} (or \emph{Dirichlet-to-Neumann}) \emph{operator} associated with $(\Delta-k)$ on $\Omega$. See for instance~\cite{FRIEDLANDER-1991,ARENDT-2012,ARFI-2019,ROZANOVA-PIERRAT-2021} for studies of Poincar\'e-Steklov operators under more restrictive assumptions on~$\Omega$.

\begin{lemma}\label{L:PS}
Let $\Omega$ be a bounded $H^1$-admissible domain. Then the following assertions hold:
\begin{enumerate}
\item[(i)] For any $k\in \R\backslash \sigma(\Delta_D)$, the Poincar\'e-Steklov operator $\Ascr_k:\Hp\to \Hm$ is a bounded linear operator and coincides with its adjoint. It is injective if and only if $k$ is not an eigenvalue of the self-adjoint Neumann Laplacian for $\Omega$.
\item[(ii)] The Poincar\'e-Steklov operator $\Ascr_1:\Hp\to \Hm$ satisfies $\Ascr_1=\partial_{\nu,i} \circ(\Tr_i)^{-1}$. It is an isometry with inverse $\Ascr_1^{-1}:\Hm\to \Hp$ given by $\Ascr_1^{-1}=\Tr_i\circ \Nscr_1$.
\end{enumerate}
\end{lemma}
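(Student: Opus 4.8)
The plan is to derive both parts of Lemma~\ref{L:PS} from the orthogonal decomposition \eqref{V1}, the isometry statements in Lemma~\ref{Trisom}, and the Green-type identity \eqref{EqGreenInt} defining the weak interior normal derivative, handling part (ii) first since it is essentially a bookkeeping consequence of material already established for $k=1$.

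For part (ii): when $k=1$, problem \eqref{DOk} is exactly the Dirichlet problem \eqref{DO}, whose unique weak solution is $u_f = \Tr_i^{-1} f \in V_1(\Omega)$, and $1 \notin \sigma(\Delta_D)$ since $\sigma(\Delta_D) \subset (-\infty,0)$, so $\mathscr{A}_1$ is well defined. By definition $\mathscr{A}_1 f = \ddny{u_f}{i} = \frac{\partial_i}{\partial\nu}\circ \Tr_i^{-1} f$, which gives the claimed factorization. Since $\Tr_i^{-1}:\Hp \to V_1(\Omega)$ is an isometry and onto by Lemma~\ref{Trisom}(iii), and $\frac{\partial_i}{\partial\nu}:V_1(\Omega)\to \Hm$ is an isometry and onto by Corollary~\ref{C:Di}(i), the composition $\mathscr{A}_1$ is an isometry from $\Hp$ onto $\Hm$.

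For part (i): first I would show $\mathscr{A}_k$ is bounded. Given $f\in\Hp$, the weak solution $u_f$ of \eqref{DOk} satisfies $\TTr_i u_f = f$ and the weak equation, hence $\Delta u_f = k u_f \in L^2(\Omega)$, so $u_f \in H^1_\Delta(\Omega)$ and $\ddny{u_f}{i}$ is defined via \eqref{EqGreenInt}; boundedness of $\mathscr{A}_k$ then follows from \eqref{E:boundnormalder} together with the standard a priori bound $\|u_f\|_{H^1(\Omega)} \le C_k \|f\|_{\Hp}$ for $k\notin\sigma(\Delta_D)$ (obtained from the closed range/invertibility of $-\Delta+k$ on $H^1_0(\Omega)$ and continuity of $\Tr_i^{-1}$). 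For self-adjointness, take $f,g\in\Hp$ with solutions $u_f,u_g$ and apply \eqref{EqGreenInt} symmetrically: writing $\Delta u_f = k u_f$ and testing against $u_g$,
\[
\langle \mathscr{A}_k f, g\rangle_{\Hm,\Hp} = \int_\Omega \nabla u_f \nabla u_g\,\dx - k\int_\Omega u_f u_g\,\dx,
\]
which is symmetric in $f,g$ because $\TTr_i u_g = g$; here one uses that the $C_c^\infty(\Omega)$-weak equation extends to all test functions in $H^1(\Omega)$ by density of $C_c^\infty(\Omega)$ in $H^1_0(\Omega)$ and the decomposition \eqref{V1}. The symmetry of the bilinear form identifies $\mathscr{A}_k$ with its adjoint. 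Finally, $\mathscr{A}_k f = 0$ means $u_f$ satisfies $\langle u_f, v\rangle_{\dot H^1(\Omega)} + k\langle u_f,v\rangle_{L^2(\Omega)} = 0$ for all $v\in H^1(\Omega)$, i.e.\ $u_f$ is a weak solution of the Neumann problem $-\Delta u + k u = 0$ with zero Neumann data; conversely a nonzero Neumann eigenfunction at $k$ has nonzero trace (else it would lie in $H^1_0(\Omega)$ and be a Dirichlet eigenfunction, impossible since $k\notin\sigma(\Delta_D)$) and provides an element of $\ker\mathscr{A}_k$. Hence $\mathscr{A}_k$ is injective iff $k$ is not a Neumann eigenvalue.

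The main obstacle is the a priori estimate $\|u_f\|_{H^1(\Omega)} \le C_k\|f\|_{\Hp}$ underpinning boundedness of $\mathscr{A}_k$: one must split $u_f = \Tr_i^{-1}f + u_0$ with $u_0\in H^1_0(\Omega)$ solving $(-\Delta+k)u_0 = -(-\Delta+k)\Tr_i^{-1}f$ in $H^{-1}(\Omega)$, and invoke that $-\Delta+k$ is boundedly invertible $H^1_0(\Omega)\to H^{-1}(\Omega)$ precisely when $k\notin\sigma(\Delta_D)$ — a standard Fredholm argument, but the one place where the hypothesis $k\notin\sigma(\Delta_D)$ is genuinely used and where care is needed to keep all constants independent of $f$.
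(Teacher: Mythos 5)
Your proposal is correct and follows essentially the same route as the paper: boundedness of $\mathscr{A}_k$ from \eqref{E:boundnormalder}, self-adjointness from the symmetry of the pairing supplied by \eqref{EqGreenInt}, the injectivity criterion via Neumann eigenfunctions, and part (ii) from Lemma \ref{Trisom} (iii) and Corollary \ref{C:Di} (i); you merely spell out the $k$-dependent a priori estimate and the injectivity equivalence that the paper dispatches as ``clear''. Two small slips, neither fatal: since $\Delta u_f = k u_f$, the pairing reads $\langle\mathscr{A}_k f,g\rangle_{\Hm,\Hp}=\int_\Omega\nabla u_f\nabla u_g\,\dx + k\int_\Omega u_f u_g\,\dx$ (plus sign, still symmetric in $f,g$, so the self-adjointness conclusion stands), and the weak equation for $u_f$ extends by density only to test functions $v\in H^1_0(\Omega)$, not to all of $H^1(\Omega)$ --- the identity against arbitrary $v\in H^1(\Omega)$ is exactly \eqref{EqGreenInt}, which is all you need there.
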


\begin{proof}
By \eqref{E:boundnormalder} and \eqref{DOk} we have 
\[\left\|\Ascr_kf\right\|_{\Hm}\leq (1+k)\left\|u_f\right\|_{H^1(\Omega)}=(1+k)\left\|f\right\|_{\Hp},\quad f\in \Hp.\] 
From \eqref{EqGreenInt} it is easily seen that 
\[\langle \Ascr_k f_1,  f_2 \rangle_{\Hm,\Hp}= \langle  \Ascr_kf_2,f_1\rangle_{\Hp,\Hm},\]
the special case for $k=1$ was stated in \eqref{E:secondGreen}. The statement on injectivity is clear. Item (ii) follows using Theorem \ref{Trisom} (iii) and Corollary \ref{C:Di} (i). 
\end{proof}

Since we are interested in isometries as in Lemma \ref{L:PS} (ii), which require the use of $k$-dependent norms on $\Hp$ and $\Hpo$, we concentrate on the special cases $k=0,1$ to keep notation simple.

Now suppose that  $\Omega\subset \R^n$ is bounded and $\dot{H}^1$-admissible. Then the linear operator $\dot{\Ascr}_0:\Hpo\to \Hmo$, defined as 
\[\dot{\Ascr}_0:= \dot{\partial}_{\nu,i}\circ (\dTr_i)^{-1},\]
is called the \emph{Poincar\'e-Steklov} (or \emph{Dirichlet-to-Neumann}) \emph{operator in the $\dot{H}^1$-sense} associated with $\Delta$ on $\Omega$. Theorem \ref{Tr0isom} (iii) and Corollary \ref{C:Dio} (i) give the following analog of Lemma~\ref{L:PS}.

\begin{lemma}\label{L:PSo}
Let $\Omega$ be a bounded $\dot{H}^1$-admissible domain. Then the linear operator $\dot{\Ascr}_0:\Hpo\to \Hmo$ coincides with its adjoint. Moreover, it is an isometry with inverse $\dot{\Ascr}_0^{-1}:\Hmo\to \Hpo$ given by $\dot{\Ascr}_0^{-1}=\dTr_i\circ \dot{\Nscr}_0$.
\end{lemma}

Writing $\iota$ to denote the Riesz isomorphism from a Hilbert space to its dual (regardless of that space), we obtain the following commutative diagram~\eqref{Fig-EQ-Diagram}:

\begin{equation}
\label{Fig-EQ-Diagram}
\begin{tikzcd}[row sep=huge, column sep=huge]
V_1(\Omega)=H_0^1(\Omega)^\perp
\arrow[r, shift left=.5ex, "\Tr_i" ] %
\arrow[rdd, shift left=0.5ex, "\frac{\del_i}{\del \nu}"]
\arrow[dd, bend right, shift left=.5ex, "\iota" ]
& \TTr_i(H^1(\Omega))=\Hp 
\arrow[dd, bend left, shift left=.5ex, "\Ascr_1=\iota" ] 
\arrow[l, shift left=.5ex, "\Tr_i^{-1}" ] 
&
\\
&&
\\
V_1'(\Omega)=(H_0^1(\Omega)^\perp)' 
\arrow[r, shift left=.5ex, " (\Tr_i^*)^{-1}" ]
\arrow[uu, bend left, shift left=.5ex, "\iota^{-1}"] 
& 
(\TTr_i(H^1(\Omega)))'=\Hm
\arrow[l, shift left=.5ex, " \Tr_i^*"]
\arrow[uu, bend right, shift left=.5ex, "\Ascr_1^{-1}=\iota^{-1}" ] 
\arrow[luu, shift left=0.5ex, "(\frac{\del_i}{\del \nu})^{-1}"]
& 
\end{tikzcd}
\end{equation}
Note that for any $f_1,f_2\in \Hp$, we have indeed 
\begin{multline}
\left\langle\iota (f_1),f_2\right\rangle_{\Hm,\Hp}=\left\langle f_1,f_2\right\rangle_{\Hp}\notag\\
=\left\langle \Tr_i^{-1}f_1,\Tr_i^{-1}f_2\right\rangle_{H^1(\Omega)}=\big\langle 
 \partial_{\nu,i} \circ(\Tr_i)^{-1}f_1,f_2\big\rangle_{\Hm,\Hp}.
\end{multline}

Obvious adjustments give an analogous commutative diagram involving the Poincar\'e-Steklov operator in the $\dot{H}^1$-sense.

\section{Transmission problems for admissible domains}\label{Sec:TrProb}

We introduce layer potential operators on two-sided admissible domains through weak well-posedness results for transmission problems of the form 
\begin{equation}\label{EqTrProblem}
\begin{cases}
(-\Delta+k) u &= 0 \quad \text{on $\R^n\backslash \partial\Omega$}\\
u_i|_{\partial\Omega}-u_e|_{\partial\Omega}&=f\\
\frac{\partial_iu_i}{\partial\nu}|_{\partial\Omega}-\frac{\partial_eu_e}{\partial \nu}|_{\partial\Omega}&= g.
\end{cases}
\end{equation}
Here $\Omega\subset \R^n$ is a bounded domain, $u_i$ and $u_e$ are the restrictions of the prospective solution $u$ to $\Omega$ and $\R^n\backslash\overline{\Omega}$ respectively, $f$ and $g$ are given data on $\partial\Omega$ and $k=0,1$. Similarly as before, the restrictions $u_i|_{\partial\Omega}$ and $u_e|_{\partial\Omega}$ and the normal derivatives $\frac{\partial_iu_i}{\partial\nu}|_{\partial\Omega}$ and $\frac{\partial_eu_e}{\partial \nu}|_{\partial\Omega}$ will be made precise using traces and abstract normal derivatives, see Subsections \ref{Subsec:DLP}, \ref{Subsec:SLP} and \ref{Subsec:Superposition}.

First, we describe our setup for \eqref{EqTrProblem} and then proceed to well-posedness results, the definitions of layer potentials and statements on some of their properties.

\subsection{Two-sided admissible domains and jumps}\label{Subsec:TSA-Jumps}

We call $\Omega\subset \R^n$ a \emph{two-sided $H^1$-admissible domain} if both $\Omega$ and $\R^n\backslash \overline{\Omega}$ are $H^1$-extension domains, $\partial\Omega=\partial(\R^n\backslash \overline{\Omega})$ and the Lebesgue measure of $\partial \Omega$ is zero. Since $\partial\Omega$ has topological dimension at least $n-1$, \cite[Theorem IV.4]{HUREWICZWALLMAN-1941}, its Hausdorff dimension is at least $n-1$, \cite[Theorem 8.14]{HEINONEN-2001}, and therefore its capacity is positive, \cite[Theorem 5.1.13]{ADAMS-1996}. As before, we do not require $\partial\Omega$ to be Lipschitz or to carry any measure. Moreover, $\partial\Omega$ may consist of parts having different Hausdorff dimensions. In principle it is also possible to extend the results of this and later sections to cases where $\Omega$ or $\R^n\backslash \overline{\Omega}$ has multiple connected components, but we will not address such extensions.

Suppose that $\Omega\subset \R^n$ is a two-sided $H^1$-admissible domain. We discuss \eqref{EqTrProblem} with $k=1$ in terms of the space $H^1(\R^n\backslash \partial\Omega)$. It admits the (natural) orthogonal decomposition 
$H^1(\R^n\backslash \partial\Omega)=H^1(\Omega)\oplus H^1(\R^n\backslash \overline{\Omega})$, 
so that every $u\in H^1(\R^n\backslash \partial\Omega)$ can be written as $u=u_i+u_e$ with uniquely determined $u_i\in H^1(\Omega)$ and $u_e\in H^1(\R^n\backslash \overline{\Omega})$.
We denote the closure of $C_c^\infty(\R^n\backslash\partial\Omega)$ in $H^1(\R^n\backslash \partial\Omega)$ by $H_0^1(\R^n\backslash \partial\Omega)$ and write $V_1(\R^n\backslash \partial\Omega)$ for its orthogonal complement, which is the space of functions that are $1$-harmonic in $\R^n\backslash \partial\Omega$. This gives a second orthogonal decomposition,
\begin{equation}\label{E:orthotwosided}
H^1(\R^n\backslash \partial\Omega)=H_0^1(\R^n\backslash \partial\Omega)\oplus V_1(\R^n\backslash \partial\Omega).
\end{equation}

To distinguish between the two operators, we now write $\TTr_i:H^1(\Omega)\to \Hp$ to denote the boundary trace operator for $\Omega$ and $\TTr_e:H^1(\R^n\backslash \overline{\Omega})\to \Hp$ to denote the boundary trace operator for $\R^n\backslash \overline{\Omega}$; by construction both map onto $\Hp$. 
We refer to $\TTr_i$ as the \emph{interior} trace operator with respect to $\Omega$ and to $\TTr_e$ as the \emph{exterior} one.
In the context of transmission problems we endow $\Hp$ with the Hilbert space norm 
\begin{equation}\label{normTransmi}
\|f\|_{\Hp,t}:=\big(\|f\|_{\Hp, i}^2+\|f\|_{\Hp, e}^2\big)^{1/2},
\end{equation}
where $\|\cdot\|_{\Hp, i}$ denotes the norm defined in \eqref{normTri} with respect to $\Omega$ and $\|\cdot\|_{\Hp,e}$ denotes the norm defined similarly but with $\R^n\backslash \overline{\Omega}$ in place of $\Omega$. 
Since both $\Omega$ and $\R^n\backslash \overline{\Omega}$ are $H^1$-extension domains, the norms $\|\cdot\|_{\Hp, i}$, $\|\cdot\|_{\Hp, e}$ and $\|\cdot\|_{\Hp,t}$ are all equivalent. As a consequence, we may view  $\TTr_i$ and $\TTr_e$ as bounded linear operators from 
$H^1(\R^n\backslash \partial\Omega)$ onto $\Hp$, no matter which norm is used. Only $\|\cdot\|_{\Hp,t}$ will be used in the sequel; we agree to denote it by $\|\cdot\|_{\Hp}$ again.

Given $u\in H^1(\R^n\backslash \partial\Omega)$, we write 
\[\llbracket\TTr u\rrbracket:=\TTr_iu-\TTr_eu\]
for its \emph{jump in trace across $\partial\Omega$}. The map $u\mapsto \llbracket\TTr u\rrbracket$ defines a bounded linear operator $\llbracket\TTr \rrbracket:H^1(\R^n\backslash \partial\Omega)\to \Hp$ and is onto.
In the same spirit, we write $\llbracket\Tr\rrbracket:=\Tr_i-\Tr_e:V_1(\nbord)\to\Hp$, also bounded, linear and onto.

If $\Delta u\in L^2(\Omega)$, then the \emph{interior} normal derivative $\ddny ui$ of $u$ with respect to $\Omega$ is as defined in \eqref{EqGreenInt}.
If $\Delta u\in L^2(\R^n\backslash \overline{\Omega})$, then we define the \emph{exterior} normal derivative $\frac{\partial_e u}{\partial\nu}$ of $u$ with respect to $\Omega$ as minus the interior normal derivative of $u$ with respect to $\R^n\backslash\overline{\Omega}$, that is, the unique element $g\in\Hm$ such that, for all $v\in H^1(\R^n\backslash\overline{\Omega})$,
\begin{equation}\label{E:extnormalder}
\langle g,\TTr_e v\rangle_{\Hm,\,\Hp}= -\int_{\R^n\backslash \overline{\Omega}}{(\Delta u)v\,\dx}-\int_{\R^n\backslash\overline{\Omega}}{\nabla u\cdot\nabla v\,\dx}.
\end{equation}
Now let
\[H^1_\Delta(\R^n\backslash \partial\Omega) :=\{u\in H^1(\R^n\backslash \partial\Omega)\ |\ \Delta u\in L^2(\R^n\backslash \partial\Omega)\}.\]
Clearly this space contains $V_1(\R^n\backslash \partial\Omega)$. For $u \in H^1_\Delta(\R^n\backslash \partial\Omega)$ we write 
\[\Big\llbracket \ddn u\Big\rrbracket:=\ddny{u}{i}-\ddny{u}{e}\]
for the \emph{jump of its normal derivative across $\partial\Omega$}. This defines a linear operator $u\mapsto \left\llbracket \ddn u\right\rrbracket$ from $H^1_{\Delta}(\R^n\backslash \partial\Omega)$ onto $\Hm$, bounded in the sense that  
\begin{equation}\label{E:normalderjumpbd}
\big\|\Big\llbracket\frac{\partial u}{\partial\nu}\Big\rrbracket\big\|_{\Hm}\leq \|u\|_{H^1(\R^n\backslash\partial\Omega)}+\|\Delta u\|_{L^2(\R^n\backslash\partial\Omega)}.
\end{equation}
We write $\llbracket\partial_\nu\rrbracket:=\partial_{\nu,i}-\partial_{\nu,e}:V_1(\nbord)\to\Hm$ for the restriction of $\left\llbracket \ddn u\right\rrbracket$ to $V_1(\nbord)$. 

We call $\Omega\subset \R^n$ a \emph{two-sided $\dot{H}^1$-admissible domain} if both $\Omega$ and $\R^n\backslash \overline{\Omega}$ are $\dot{H}^1$-extension domains, $\Omega$ is bounded, $\partial\Omega=\partial(\R^n\backslash \overline{\Omega})$ and the Lebesgue measure of $\partial\Omega$ is zero.

Suppose that $\Omega\subset \R^n$ is a two-sided $\dot{H}^1$-admissible domain. We discuss \eqref{EqTrProblem} with $k=0$
using the space $\dot{H}^1(\R^n\backslash\partial\Omega)$, which admits the (natural) orthogonal decomposition $\dot{H}^1(\R^n\backslash\partial\Omega)=\dot{H}^1(\Omega)\oplus \dot{H}^1(\R^n\backslash\overline{\Omega})$. We denote the closure of $C_c^\infty(\R^n\backslash\partial\Omega)$ in $\dot{H}^1(\R^n\backslash \partial\Omega)$ by $\dot{H}_0^1(\R^n\backslash \partial\Omega)$ and its orthogonal complement by $\dot{V}_0(\R^n\backslash \partial\Omega)$.

By $\dTTr_i:\dot{H}^1(\Omega)\to\Hpo$ and $\dTTr_e:\dot{H}^1(\R^n\backslash\overline{\Omega})\to\Hpo$, we denote the interior and exterior trace with respect to $\Omega$, defined in an analogous manner. We endow $\Hpo$ with the Hilbert space norm 
\[\|f\|_{\Hpo,t}:=\big(\|f\|_{\Hpo, i}^2+\|f\|_{\Hpo, e}^2\big)^{1/2}\]
and with the summands defined similarly as before. Only $\|\cdot\|_{\Hpo,t}$ will be used in the sequel; we agree to denote it by $\|\cdot\|_{\Hpo}$ again.

Given $u\in \dot{H}^1(\R^n\backslash \partial\Omega)$, we set 
\[\llbracket\dTTr u\rrbracket:=\dTTr_iu-\dTTr_eu\]
for its jump in trace across $\Omega$ in the $\dot{H}^1$-sense. The map $\llbracket\dTTr \rrbracket: \dot{H}^1(\R^n\backslash\partial\Omega)\to \Hpo$, $u\mapsto \llbracket\dTTr u\rrbracket$, defines a bounded linear operator which is onto. We write $\llbracket\dTr\rrbracket:=\dTr_i-\dTr_e:\dot V_0(\nbord)\to\Hpo$, also bounded, linear and onto.

Let $\dot{H}^1_\Delta(\R^n\backslash \partial\Omega)$ denote the space of all $u\in \dot{H}^1(\R^n\backslash \partial\Omega)$ with $\Delta u\in L^2(\R^n\backslash \partial\Omega)$; it contains $\dot{V}_0(\R^n\backslash \partial\Omega)$. Given an element $u\in \dot{H}^1_\Delta(\R^n\backslash \partial\Omega)$, we write $\frac{\dot{\partial}_i u}{\partial\nu}$ for its interior normal derivative in the $\dot{H}^1$-sense with respect to $\Omega$ and $\frac{\dot{\partial}_e u}{\partial\nu}$ for its exterior, again defined as minus the interior with respect to $\R^n\backslash \overline{\Omega}$. We then write 
\[\Big\llbracket \frac{\dot\partial u}{\partial\nu}\Big\rrbracket:= \frac{\dot{\partial}_i u}{\partial\nu}-\frac{\dot{\partial}_e u}{\partial\nu}\]
for the jump of its normal derivative across $\partial\Omega$ in the $\dot{H}^1$-sense.
This defines a linear operator $u\mapsto \llbracket \frac{\dot\partial u}{\partial\nu}\rrbracket$ from $\dot{H}^1_\Delta(\R^n\backslash \partial\Omega)$ onto $\Hmo$, bounded in the sense that 
\[\big\|\Big\llbracket \frac{\dot\partial u}{\partial\nu}\Big\rrbracket\big\|_{\Hmo}\leq \|u\|_{\dot{H}^1(\R^n\backslash\partial \Omega)}+\|\Delta u\|_{L^2(\R^n\backslash\partial\Omega)}.\]
We write $\llbracket\dot\partial_\nu\rrbracket:=\dot\partial_{\nu,i}-\dot\partial_{\nu,e}:\dot V_0(\nbord)\to\Hmo$.

\begin{remark}\label{R:ellipticreg}{\color{black}
By the smoothness of $1$-harmonic and harmonic functions, any element of $V_1(\R^n\backslash \partial\Omega)$ or $\dot{V}_0(\R^n\backslash \partial\Omega)$ has a representative in $C^\infty(\R^n\backslash \partial\Omega)$.}
\end{remark}

\subsection{Subspaces and orthogonality}\label{Subsec:SubSp-Ortho}

Suppose that $\Omega\subset \R^n$ is a two-sided $H^1$-admissible domain. We write 
\[V_{1,\Scal}(\R^n\backslash \partial\Omega):=\{u\in V_1(\R^n\backslash \partial\Omega)\ |\ \llbracket\Tr u\rrbracket=0\}\]
and 
\[V_{1,\Dcal}(\R^n\backslash \partial\Omega):=\{u\in V_1(\R^n\backslash \partial\Omega)\ |\  \llbracket \partial_{\nu} u\rrbracket=0\}.\]
Recall that $\llbracket\TTr \rrbracket:H^1(\R^n\backslash \partial\Omega)\to \Hp$ and that we use the Hilbert norm \eqref{normTransmi} on $\Hp$.

\begin{lemma}\label{L:spaces} Let $\Omega\subset \R^n$ be two-sided $H^1$-admissible. Then the following statements hold:
\begin{enumerate}
\item[(i)]
Each element of $V_{1,\Scal}(\R^n\backslash \partial\Omega)$ has a unique extension to an element of $H^1(\R^n)$.
In this sense, the space $V_{1,\Scal}(\R^n\backslash \partial\Omega)$ is the orthogonal complement of $H_0^1(\R^n\backslash \partial\Omega)$ in $H^1(\R^n)$. We have $\ker~\llbracket\TTr \rrbracket =H^1(\R^n)$. The linear operator $\TTr:H^1(\R^n)\to \Hp$, defined as $\TTr:=\TTr_i|_{H^1(\R^n)}=\TTr_e|_{H^1(\R^n)}$, is bounded with operator norm one. Its kernel is $H_0^1(\R^n\backslash \partial\Omega)$.
\item[(ii)] The space $V_{1,\Scal}(\R^n\backslash \partial\Omega)$ is a closed subspace of $H^1(\R^n\backslash \partial\Omega)$. The linear operator 
\begin{equation}\label{E:tracesurjective}
\Tr:V_{1,\Scal}(\R^n\backslash \partial\Omega)\to \Hp,
\end{equation}
defined as the restriction $\Tr:=\TTr|_{V_{1,\Scal}(\R^n\backslash \partial\Omega)}$, is an isometry and onto. 
\item[(iii)] The space $V_{1,\Dcal}(\R^n\backslash \partial\Omega)$ is a closed subspace  of $H^1(\R^n\backslash \partial\Omega)$. The linear operator 
\begin{equation}\label{E:normaldersurjective}
\partial_{\nu}: V_{1,\Dcal}(\R^n\backslash \partial\Omega)\to \Hm,
\end{equation}
defined as $\partial_{\nu}:=\partial_{\nu,i} =\partial_{\nu,e}$, is bijective and bounded.
\item[(iv)] The space $V_1(\R^n\backslash \partial\Omega)$ admits the orthogonal decomposition 
\[V_1(\R^n\backslash \partial\Omega)=V_{1,\Scal}(\R^n\backslash \partial\Omega)\oplus V_{1,\Dcal}(\R^n\backslash \partial\Omega).\]
\end{enumerate}
\end{lemma}

\begin{proof}
For any $u\in V_{1,\mathcal{S}}(\mathbb{R}^n\setminus \partial\Omega)$ we can find $w\in H^1(\mathbb{R}^n)$ such that $\widetilde{w}|_{\partial\Omega}=
\TTr_iu=\TTr_eu$ in $\Hp$, $w|_\Omega\in V_1(\Omega)$ and $w|_{\mathbb{R}^n\setminus\overline{\Omega}}\in V_1(\mathbb{R}^n\setminus\overline{\Omega})$. This is clear by the solvability of the Dirichlet problems of type \eqref{DO} on $\Omega$ and $\mathbb{R}^n\setminus \overline{\Omega}$, respectively. By the uniqueness of the weak solutions then $u|_\Omega=w|_\Omega$ in $H^1(\Omega)$ and $u|_{\mathbb{R}^n\setminus\overline{\Omega}}=w|_{\mathbb{R}^n\setminus\overline{\Omega}}$ in $H^1(\mathbb{R}^n\setminus\overline{\Omega})$. Combining, we see that $u$ has a unique extension $w\in H^1(\mathbb{R}^n)$ to $\mathbb{R}^n$ and may therefore itself be seen as an element of $H^1(\mathbb{R}^n)$. This gives $V_{1,\mathcal{S}}(\mathbb{R}^n\setminus \partial\Omega)\subset H^1(\mathbb{R}^n)$. Clearly $H_0^1(\mathbb{R}^n\setminus \partial\Omega)\subset H^1(\mathbb{R}^n)$.

Suppose that $u\in H^1(\mathbb{R}^n)$. Then, by definition, $\TTr_i u=\TTr_e u$. If in addition $\left\langle u,v\right\rangle_{H^1(\mathbb{R}^n)}=0$ for all $v\in H^1_0(\mathbb{R}^n\setminus \partial\Omega)$, then $u\in V_{1}(\mathbb{R}^n\setminus \partial\Omega)$ and, consequently, $u\in V_{1,\mathcal{S}}(\mathbb{R}^n\setminus \partial\Omega)$ by the definition of this space.
If instead $u\in V_{1,\mathcal{S}}(\mathbb{R}^n\setminus \partial\Omega)$, then for all $v\in H^1_0(\mathbb{R}^n\setminus \partial\Omega)$ we have 
\begin{align}
\left\langle u,v\right\rangle_{H^1(\mathbb{R}^n)}&=\int_{\Omega}\nabla u\cdot\nabla v\: \dx+ \int_{\Omega} u v\: \dx+\int_{\R^n\backslash\partial\Omega}\nabla u\cdot\nabla v\: \dx + \int_{\R^n\backslash\partial\Omega} u v\: \dx\notag\\
&=\left\langle \partial_{\nu,i}u,\TTr_i v\right\rangle_{\Hm,\Hp}-\left\langle \partial_{\nu,e}u,\TTr_ev\right\rangle_{\Hm,\Hp}\notag\\
&=0,\notag
\end{align}
because $u$ is $1$-harmonic in $\mathbb{R}^n\setminus \partial\Omega$ and $\TTr_i v=\TTr_e v=0$. This proves the orthogonal decomposition $H^1(\R^n)= H_0^1(\mathbb{R}^n\setminus \partial\Omega)\oplus V_{1,\mathcal{S}}(\mathbb{R}^n\setminus \partial\Omega)$. The right-hand side of this decomposition equals $\ker~\llbracket\TTr \rrbracket$, as can be seen by evaluating the condition $\llbracket\TTr u\rrbracket=0$ on either side of \eqref{E:orthotwosided}. The remaining parts of (i) now follow with \eqref{normTransmi}, Theorem \ref{Trisom} (iii) and identity \eqref{identH10}. 

The first claim in (ii) follows from the boundedness of $\llbracket\TTr\rrbracket$, the claims on isometry and surjectivity follow from \eqref{normTransmi} and Theorem \ref{Trisom} (iii). 

The limit $u$ of a convergent sequence of elements of $V_{1,\Dcal}(\R^n\backslash \partial\Omega)$ is in $V_{1}(\R^n\backslash \partial\Omega)$, and since $\llbracket \partial_{\nu}\rrbracket$ is bounded on $V_{1}(\R^n\backslash \partial\Omega)$ by \eqref{E:normalderjumpbd}, it follows that $\llbracket \partial_{\nu}u\rrbracket=0$; this shows the first claim in (iii). The boundedness of $\partial_{\nu}$ follows from \eqref{E:boundnormalder}, its bijectivity is due to the unique solvability of the Neumann problems on $\Omega$ and $\mathbb{R}^n\setminus \overline{\Omega}$.

To prove (iv), note that, given $u\in V_1(\R^n\backslash\partial\Omega)$, we have 
\begin{align}
\big\langle \llbracket \partial_{\nu} u\rrbracket, \Tr v\big\rangle_{\Hm, \Hp}&=\int_{\R^n\backslash\partial\Omega}(\Delta u)v\: \dx+\int_{\R^n\backslash\partial\Omega}\nabla u\cdot\nabla v\: \dx\nonumber\\&=\left\langle u,v\right\rangle_{H^1(\R^n\backslash\partial\Omega)} \label{E:orthotoharmfunctions}
\end{align} 
for any $v\in V_{1,\Scal}(\R^n\backslash \partial\Omega)$. Since $\Tr$ in \eqref{E:tracesurjective} is surjective, it follows that $\left\llbracket \partial_{\nu} u\right\rrbracket=0$ in $\Hm$ if and only if \eqref{E:orthotoharmfunctions} is zero for all $v\in V_{1,\Scal}(\R^n\backslash \partial\Omega)$, and this is the case if and only if $u$ belongs to the orthogonal complement of $V_{1,\Scal}(\R^n\backslash \partial\Omega)$ in $V_1(\R^n\backslash\partial\Omega)$. 
\end{proof}

\begin{remark}\mbox{}
\begin{enumerate}
\item[(i)] If instead of the natural dual space norm based on \eqref{normTransmi}, we use the equivalent Hilbert space norm 
$g\mapsto \big(\|g\|_{\Hm, i}^2+\|g\|_{\Hm, e}^2\big)^{1/2}$ on $\Hm$, where $\|\cdot\|_{\Hm, i}$ and $\|\cdot\|_{\Hm, e}$ are the natural dual space norms based on $\|\cdot\|_{\Hp, i}$ and $\|\cdot\|_{\Hp, e}$, then \eqref{E:normaldersurjective} becomes an isometry.
\item[(ii)] To give an alternative proof of (iv), we could observe that, given $v\in V_1(\R^n\backslash\partial\Omega)$, we have 
\begin{equation}\label{E:orthotoforms}
\big\langle \partial_{\nu} u, \llbracket\Tr v \rrbracket\big\rangle_{\Hm, \Hp}=\left\langle u,v\right\rangle_{H^1(\R^n\backslash\partial\Omega)}
\end{equation} 
for all $u\in V_{1,\Dcal}(\R^n\backslash \partial\Omega)$. Since $\partial_\nu$ in \eqref{E:normaldersurjective} is surjective and $\Hp$ is a Hilbert space, it follows that $\llbracket\Tr v \rrbracket=0$ in $\Hp$ if and only if $v$ belongs to the orthogonal complement of  $V_{1,\Dcal}(\R^n\backslash \partial\Omega)$ in $V_1(\R^n\backslash\partial\Omega)$.
\end{enumerate}
\end{remark}

By $\mathrm{P}_{1,\Scal}$ and $\mathrm{P}_{1,\Dcal}$, we denote the orthogonal projections from $H^1(\R^n\backslash \partial\Omega)$ onto the closed subspaces 
$V_{1,\Scal}(\R^n\backslash \partial\Omega)$ and $V_{1,\Dcal}(\R^n\backslash \partial\Omega)$ respectively. The following is a straightforward consequence of Lemma \ref{L:spaces} (iv).

\begin{corollary}\label{C:orthoV}
Let $\Omega$ be two-sided $H^1$-admissible. If $u,v\in V_1(\R^n\backslash \partial\Omega)$, then 
\begin{multline*}
\left\langle u,v\right\rangle_{H^1(\R^n\backslash\partial\Omega)}= \big\langle\llbracket \partial_{\nu} u\rrbracket, \Tr\circ \mathrm{P}_{1,\Scal}\,v\big\rangle_{\Hm,\Hp}\\
+\big\langle \partial_{\nu} \circ \mathrm{P}_{1,\Dcal}\,v,\left\llbracket\Tr u\right\rrbracket
\big\rangle_{\Hm,\Hp}.
\end{multline*}
\end{corollary}

Now suppose that $\Omega\subset \R^n$ is a two-sided $\dot{H}^1$-admissible domain. We write
\[\dot{V}_{0,\dot{\Scal}}(\R^n\backslash \partial\Omega):=\{u\in \dot{V}_0(\R^n\backslash \partial\Omega)\ |\ \llbracket\dTr u\rrbracket=0\}\]
and  
\[\dot{V}_{0,\dot{\Dcal}}(\R^n\backslash \partial\Omega):=\{u\in \dot{V}_0(\R^n\backslash \partial\Omega)\ |\ \llbracket \dot{\partial}_{\nu} u\rrbracket=0\}.\]
The next lemma is seen similarly as Lemma \ref{L:spaces}.
\begin{lemma}\label{L:spaceso} Let $\Omega\subset \R^n$ be two-sided $\dot{H}^1$-admissible. Then the following statements hold:
\begin{enumerate}
\item[(i)] Each element of $\dot{V}_{0,\dot{\Scal}}(\R^n\backslash \partial\Omega)$ has a unique continuation to an element of $\ker~\llbracket\dTTr \rrbracket$. In this sense, the space $\dot{V}_{0,\dot{\Scal}}(\R^n\backslash \partial\Omega)$ is the orthogonal complement of the closure of $C_c^\infty(\R^n\backslash\partial\Omega)$ in $\ker~\llbracket\dTTr \rrbracket$. This closure is also the kernel of the linear operator $\dTTr:\ker~\llbracket\dTTr \rrbracket\to \Hpo$ defined as $\dTTr:=\dTTr_i=\dTTr_e$, which is bounded with operator norm one.
\item[(ii)] The space $\dot{V}_{0,\dot{\Scal}}(\R^n\backslash \partial\Omega)$ is a closed subspace of $\dot{H}^1(\R^n\backslash \partial\Omega)$.
The linear operator 
\[\dTr:\dot{V}_{0,\dot{\Scal}}(\R^n\backslash \partial\Omega)\to \Hpo,\]
defined as the restriction $\dTr:=\dTTr|_{\dot{V}_{0,\dot{\Scal}}(\R^n\backslash \partial\Omega)}$, is an isometry and onto. 
\item[(iii)] The space $\dot{V}_{0,\dot{\Dcal}}(\R^n\backslash \partial\Omega)$ is a closed subspace of $\dot{H}^1(\R^n\backslash \partial\Omega)$.
The linear operator 
\[\dot\partial_\nu: \dot{V}_{0,\dot{\Dcal}}(\R^n\backslash \partial\Omega)\to \Hm,\]
defined as $\dot\partial_\nu:=\dot{\partial}_{\nu,i}=\dot\partial_{\nu,e}$, is bijective and bounded.
\item[(iv)] The space $\dot{V}_0(\R^n\backslash \partial\Omega)$ admits the orthogonal decomposition 
\[\dot{V}_0(\R^n\backslash \partial\Omega)=\dot{V}_{0,\dot{\Scal}}(\R^n\backslash \partial\Omega)\oplus \dot{V}_{0,\dot{\Dcal}}(\R^n\backslash \partial\Omega).\]
\end{enumerate}
\end{lemma}

\begin{remark}\label{R:readjustedversion}\mbox{}
\begin{enumerate}
\item[(i)] Note that, since constants are ignored by $\dTTr_i$ and $\dTTr_e$, elements of
$\ker~\llbracket\dTTr \rrbracket$ are classes modulo locally constant functions (one constant on $\Omega$ and one on $\mathbb{R}^n\backslash\overline{\Omega}$). The space $\dot{H}^1(\R^n)$, whose elements are classes modulo a single constant, is a proper subspace of
$\ker~\llbracket\dTTr \rrbracket$.
\item[(ii)] Suppose that $\Omega$ is both two-sided $H^1$- and $\dot{H}^1$-admissible and $u\in \ker ~\llbracket\dTTr \rrbracket$. 
Let $w_i\in u_i$ and $w_e\in u_e$ be representatives modulo constants of $u_i$ and $u_e$. 
By the arguments used to show Proposition \ref{P:traceo}, both $\TTr_iw_i$ and $\TTr_ew_e$ are well-defined elements of $\Hp$, and by construction, there is a constant $c\in\R$ such that $\TTr_iw_i-\TTr_ew_e=c$. 
Setting $w':=w_i+w_e+c\mathds{1}_{\R^n\backslash\overline{\Omega}}$ we obtain a representative $w'$ of $u\in \ker ~\llbracket\dTTr \rrbracket$ which satisfies $\llbracket\TTr w'\rrbracket=0$ in $\Hp$. The equivalence class $\overline{u}$ of $w'$ modulo single constants is a uniquely determined element of $\dot{H}^1(\R^n)$, which we call the \emph{zero trace jump readjusted representative} of $u$. 

The zero trace jump readjusted representative $\overline{u}$ of $u\in 
\dot{V}_{0,\dot{\Scal}}(\R^n\backslash \partial\Omega)$ is an element of the orthogonal complement of $C_c^\infty(\R^n\backslash\partial\Omega)$ in $\dot{H}^1(\R^n)$; we denote it by $\overline{V}_{0,\dot{\mathcal{S}}}(\R^n\backslash \partial\Omega)$. Setting $\overline{\Tr}\:\overline{u}:=\dTr u$, we obtain a linear bijection 
\begin{equation}\label{E:readjustetrace}
\overline{\Tr}:\overline{V}_{0,\dot{\mathcal{S}}}(\R^n\backslash \partial\Omega)\to\Hpo.
\end{equation}
\end{enumerate}
\end{remark}

By $\mathrm{P}_{0,\dot{\Scal}}$ and $\mathrm{P}_{0,\dot{\Dcal}}$, we denote the orthogonal projections from $\dot{H}^1(\R^n\backslash \partial\Omega)$ onto the closed subspaces 
$\dot{V}_{0,\dot{\Scal}}(\R^n\backslash \partial\Omega)$ and $\dot{V}_{0,\dot{\Dcal}}(\R^n\backslash \partial\Omega)$ respectively. Lemma \ref{L:spaceso} (iv) now gives the following.

\begin{corollary}\label{C:orthoVo}
Let $\Omega$ be two-sided $\dot{H}^1$-admissible. If $u,v\in \dot{V}_0(\R^n\backslash \partial\Omega)$, then 
\begin{multline*}
\langle u,v\rangle_{\dot{H}^1(\R^n\backslash\partial\Omega)}= \big\langle\llbracket\dot\partial_\nu u\rrbracket, \dTr\circ \mathrm{P}_{0,\dot{\Scal}}\,v\big\rangle_{\Hmo,\Hpo}
\\+\big\langle \dot\partial_\nu\circ \mathrm{P}_{0,\dot{\Dcal}}\,v,\llbracket\dTr u\rrbracket
\big\rangle_{\Hmo,\Hpo}.
\end{multline*}
\end{corollary}

\subsection{Double layer potentials}\label{Subsec:DLP}

Let $\Omega\subset \R^n$ be two-sided $H^1$-admissible.
Given $f\in \Hp$, we call $u\in H^1(\R^n\backslash \partial\Omega)$ a \emph{weak solution} of \eqref{EqTrProblem} with $k=1$ and $g=0$, that is, a weak solution to the problem formally stated as
\begin{equation}\label{E:transmiDL}
\begin{cases}
-\Delta u+ u &= 0 \quad \text{on $\R^n\backslash \partial\Omega$}\\
u_i|_{\partial\Omega}-u_e|_{\partial\Omega}&=-f\\
\frac{\partial_iu_i}{\partial\nu}|_{\partial\Omega}-\frac{\partial_eu_e}{\partial \nu}|_{\partial\Omega}&= 0,
\end{cases}
\end{equation}
if we have $\left\langle u,v\right\rangle_{H^1(\R^n\backslash\partial\Omega)}=0$ for all $v\in C_c^\infty(\R^n\backslash \partial\Omega)\cup V_{1,\Scal}(\R^n\backslash \partial\Omega)$ and $\left\langle v,u\right\rangle_{H^1(\R^n\backslash \bord)}=-\left\langle \partial_\nu v,f\right\rangle_{\Hm,\Hp}$ for all 
$v\in V_{1,\Dcal}(\R^n\backslash \partial\Omega)$. 

\begin{lemma}\label{L:transmiDL}
Let $\Omega\subset \R^n$ be two-sided $H^1$-admissible. For any $f\in \Hp$ there is a unique weak solution $u^f$ of \eqref{E:transmiDL}. It is an element of $V_{1,\Dcal}(\R^n\backslash \partial\Omega)$ and satisfies $\|u^f\|_{H^1(\R^n\backslash \partial\Omega)}\leq \|f\|_{\Hp}$.
\end{lemma}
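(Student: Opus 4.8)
The plan is to construct the solution $u^f$ directly using the decomposition results already established, and then verify uniqueness via the orthogonal decomposition in Lemma~\ref{L:spaces}~(iv). First I would recall that $\TTr:H^1(\mathbb{R}^n)\to\Hp$ is a surjection with operator norm one (Lemma~\ref{L:spaces}~(i)), so I may pick the unique $w\in V_{1,\mathscr{S}}(\R^n\setminus\partial\Omega)$ (equivalently, the minimal-norm element of $H^1(\mathbb{R}^n)$) with $\Tr w = f$ and $\|w\|_{H^1(\mathbb{R}^n\setminus\partial\Omega)} = \|f\|_{\Hp}$, using the isometry in \eqref{E:tracesurjective}. The natural candidate is to take $u^f$ to be an appropriately \emph{reflected} or \emph{sign-adjusted} version of $w$: since the prescribed jump in trace is $-f$ while the jump in normal derivative must vanish, I would set $u^f := -P_{1,\mathscr{D}} w$ or, more precisely, analyze how $w$ decomposes. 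The cleaner route: observe that the function $w\in V_{1,\mathscr{S}}$ has $\llbracket\TTr w\rrbracket = 0$, which is not what we want. Instead I would build $u^f$ from the one-sided harmonic extensions: let $u_i := \Tr_i^{-1}(f_i)$ and $u_e := -\Tr_e^{-1}(f_e)$ for a suitable splitting, but the jump-in-normal-derivative constraint couples the two sides, so a direct product construction does not immediately work.

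The correct approach is variational. I would define a bounded linear functional $F$ on $V_{1,\mathscr{S}}(\R^n\setminus\partial\Omega)$ and use Riesz representation. Concretely, the weak formulation asks for $u$ orthogonal (in $H^1(\mathbb{R}^n\setminus\partial\Omega)$) to $C_c^\infty(\mathbb{R}^n\setminus\partial\Omega)$ and to $V_{1,\mathscr{S}}(\R^n\setminus\partial\Omega)$ — the first two conditions together say exactly $u\in V_{1,\mathscr{D}}(\mathbb{R}^n\setminus\partial\Omega)$, by the orthogonal decomposition $V_1 = V_{1,\mathscr{S}}\oplus V_{1,\mathscr{D}}$ in Lemma~\ref{L:spaces}~(iv) combined with $H^1(\mathbb{R}^n\setminus\partial\Omega) = H^1_0(\mathbb{R}^n\setminus\partial\Omega)\oplus V_1(\mathbb{R}^n\setminus\partial\Omega)$. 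So the problem reduces to: find $u\in V_{1,\mathscr{D}}(\mathbb{R}^n\setminus\partial\Omega)$ such that $\langle v,u\rangle_{H^1(\mathbb{R}^n\setminus\partial\Omega)} = \langle \frac{\partial v}{\partial\nu}, f\rangle_{\Hm,\Hp}$ for all $v\in V_{1,\mathscr{D}}(\mathbb{R}^n\setminus\partial\Omega)$. Now $v\mapsto \langle \frac{\partial v}{\partial\nu},f\rangle_{\Hm,\Hp}$ is a bounded linear functional on $V_{1,\mathscr{D}}$ because $\frac{\partial}{\partial\nu}:V_{1,\mathscr{D}}\to\Hm$ is an isometry (Lemma~\ref{L:spaces}~(iii)) and $|\langle\frac{\partial v}{\partial\nu},f\rangle|\le \|\frac{\partial v}{\partial\nu}\|_{\Hm}\|f\|_{\Hp} = \|v\|_{H^1(\mathbb{R}^n\setminus\partial\Omega)}\|f\|_{\Hp}$. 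By the Riesz representation theorem on the Hilbert space $V_{1,\mathscr{D}}(\mathbb{R}^n\setminus\partial\Omega)$ there is a unique $u^f$ representing it, and the norm bound $\|u^f\|_{H^1(\mathbb{R}^n\setminus\partial\Omega)}\le\|f\|_{\Hp}$ follows immediately from the operator-norm-one estimate just displayed.

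The remaining point is to check that this $u^f$, a priori only characterized against test functions in $V_{1,\mathscr{D}}$, actually satisfies the full weak formulation, i.e.\ is orthogonal to $C_c^\infty(\mathbb{R}^n\setminus\partial\Omega)$ and to all of $V_{1,\mathscr{S}}(\R^n\setminus\partial\Omega)$; this is automatic since $u^f\in V_{1,\mathscr{D}}$ and $V_{1,\mathscr{D}}\perp H^1_0(\mathbb{R}^n\setminus\partial\Omega)$ (as $V_{1,\mathscr{D}}\subset V_1 = (H^1_0)^\perp$) and $V_{1,\mathscr{D}}\perp V_{1,\mathscr{S}}$ by Lemma~\ref{L:spaces}~(iv). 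Conversely, any weak solution $u$ lies in $V_{1,\mathscr{D}}(\mathbb{R}^n\setminus\partial\Omega)$ by the orthogonality conditions and satisfies the same Riesz identity, hence equals $u^f$ by uniqueness of the representative. I should also verify that $u^f$ does solve \eqref{E:transmiDL} in the stated sense — in particular that $\llbracket\TTr u^f\rrbracket = -f$; this comes from testing the Riesz identity against an arbitrary $v\in V_{1,\mathscr{D}}$, using the Green-type identity \eqref{E:orthotoforms} which gives $\langle\frac{\partial v}{\partial\nu},\llbracket\TTr u^f\rrbracket\rangle_{\Hm,\Hp} = \langle v, u^f\rangle_{H^1(\mathbb{R}^n\setminus\partial\Omega)} = \langle\frac{\partial v}{\partial\nu},f\rangle_{\Hm,\Hp}$, and since $\frac{\partial}{\partial\nu}:V_{1,\mathscr{D}}\to\Hm$ is onto, $\llbracket\TTr u^f\rrbracket = f$ — so one must be careful with the sign convention, adjusting to $u^f$ or $-u^f$ so that the jump equals $-f$ as in \eqref{E:transmiDL}. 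The only genuinely delicate bookkeeping is matching sign conventions between the jump relations and the definition of weak solution; the functional-analytic core is a one-line Riesz representation argument, so I do not anticipate a serious obstacle.
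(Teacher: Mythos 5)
Your argument is correct and takes essentially the same route as the paper: the whole proof is the observation that $v\mapsto\langle \frac{\partial v}{\partial\nu},f\rangle_{\Hm,\Hp}$ is a bounded linear functional on the Hilbert space $V_{1,\mathscr{D}}(\mathbb{R}^n\setminus\partial\Omega)$ (with norm at most $\|f\|_{\Hp}$, by the isometry in Lemma \ref{L:spaces} (iii)), so the Riesz representation theorem yields existence, uniqueness and the norm bound at once, and membership in $V_{1,\mathscr{D}}(\mathbb{R}^n\setminus\partial\Omega)$ plus the orthogonal decomposition of Lemma \ref{L:spaces} (iv) shows this element satisfies, and is the only element satisfying, the full weak formulation. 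The sign question you raise at the end concerns only the bookkeeping between the variational definition, the formal jump condition in \eqref{E:transmiDL} and Corollary \ref{C:DLbijective}, not the assertions of the lemma (existence, uniqueness, membership, norm bound), so no replacement of $u^f$ by $-u^f$ is called for in proving the statement as phrased.
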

\begin{proof}
Since, by Lemma \ref{L:spaces} (iii), the linear functional $v\mapsto \left\langle \partial_\nu v,f\right\rangle_{\Hm,\Hp}$ is bounded on the closed subspace $V_{1,\Dcal}(\R^n\backslash \partial\Omega)$ of 
$H^1(\R^n\backslash \partial\Omega)$, the result follows from the Riesz representation theorem.
\end{proof} 

We refer to the bounded linear operator $\Dcal:\Hp\to V_{1,\Dcal}(\R^n\backslash \partial\Omega)$ defined by 
\[\Dcal f:=u^f\]
as the \emph{double layer potential operator associated with the transmission problem for $1-\Delta$ and $\Omega$}.
\begin{corollary}\label{C:DLbijective}
Let $\Omega$ be two-sided $H^1$-admissible. The operator $\Dcal$ is bijective, and its inverse is $\Dcal^{-1}=-\llbracket\Tr \rrbracket$.
\end{corollary}

Now let $\Omega\subset \R^n$ be two-sided $\dot{H}^1$-admissible. Given $f\in \Hpo$, we call $u\in \dot{H}^1(\R^n\backslash \partial\Omega)$ a \emph{weak solution in the $\dot{H}^1$-sense} of \eqref{EqTrProblem} with $k=0$ and $g=0$, that is, 
\begin{equation}\label{E:transmiDLo}
\begin{cases}
-\Delta u &= 0 \quad \text{on $\R^n\backslash \partial\Omega$}\\
u_i|_{\partial\Omega}-u_e|_{\partial\Omega}&=-f\\
\frac{\partial_iu_i}{\partial\nu}|_{\partial\Omega}-\frac{\partial_eu_e}{\partial \nu}|_{\partial\Omega}&= 0,
\end{cases}
\end{equation}
if it satisfies $\left\langle u,v\right\rangle_{\dot{H}^1(\R^n\backslash\partial\Omega)}=0$ for all $v\in C_c^\infty(\R^n\backslash \partial\Omega)\cup \dot{V}_{0,\dot{\Scal}}(\R^n\backslash \partial\Omega)$ and $\left\langle v,u\right\rangle_{\dot{H}^1(\R^n\backslash \bord)}=-\big\langle \dot\partial_\nu v,f\big\rangle_{\Hmo,\Hpo}$ for all 
$v\in \dot{V}_{0,\dot{\Dcal}}(\R^n\backslash \partial\Omega)$. 
\begin{lemma}
Let $\Omega\subset \R^n$ be two-sided $\dot{H}^1$-admissible. For any $f\in \Hpo$, there is a unique weak solution $u^f$ of \eqref{E:transmiDLo} in the $\dot{H}^1$-sense. It is an element of $\dot{V}_{0, \dot{\Dcal}}(\R^n\backslash \partial\Omega)$ and satisfies $\|u^f\|_{\dot{H}^1(\R^n\backslash \partial\Omega)}\leq \|f\|_{\Hpo}$.
\end{lemma}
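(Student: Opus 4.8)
The plan is to mirror the proof of Lemma~\ref{L:transmiDL}, now working inside the Hilbert space $\dot{H}^1(\R^n\setminus\partial\Omega)$ and invoking Lemma~\ref{L:spaceso} in place of Lemma~\ref{L:spaces}. The first step is to recast the two defining conditions of a weak solution in the $\dot{H}^1$-sense as a single variational identity on a closed subspace. Indeed, the requirement $\langle u,v\rangle_{\dot{H}^1(\R^n\setminus\partial\Omega)}=0$ for all $v\in C_c^\infty(\R^n\setminus\partial\Omega)$ says exactly that $u\in\dot{V}_0(\R^n\setminus\partial\Omega)$, and adding orthogonality to $\dot{V}_{0,\dot{\mathscr{S}}}(\R^n\setminus\partial\Omega)$ places $u$, by the orthogonal decomposition in Lemma~\ref{L:spaceso}~(iv), into $\dot{V}_{0,\dot{\mathscr{D}}}(\R^n\setminus\partial\Omega)$. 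Hence $u$ is a weak solution of~\eqref{E:transmiDLo} in the $\dot{H}^1$-sense if and only if $u\in\dot{V}_{0,\dot{\mathscr{D}}}(\R^n\setminus\partial\Omega)$ and
\[
\langle v,u\rangle_{\dot{H}^1(\R^n\setminus\partial\Omega)}=\Big\langle \frac{\dot{\partial} v}{\partial\nu},f\Big\rangle_{\Hmo,\Hpo}\qquad\text{for all }v\in\dot{V}_{0,\dot{\mathscr{D}}}(\R^n\setminus\partial\Omega).
\]

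The second step is to apply the Riesz representation theorem on $\dot{V}_{0,\dot{\mathscr{D}}}(\R^n\setminus\partial\Omega)$, which is a closed subspace of $\dot{H}^1(\R^n\setminus\partial\Omega)$ by Lemma~\ref{L:spaceso}~(iii), hence a Hilbert space. By that same item the map $v\mapsto\frac{\dot{\partial}v}{\partial\nu}$ is an isometry of $\dot{V}_{0,\dot{\mathscr{D}}}(\R^n\setminus\partial\Omega)$ onto $\Hmo$, so the linear functional $v\mapsto\langle\frac{\dot{\partial}v}{\partial\nu},f\rangle_{\Hmo,\Hpo}$ is bounded on $\dot{V}_{0,\dot{\mathscr{D}}}(\R^n\setminus\partial\Omega)$ with norm at most $\|f\|_{\Hpo}$. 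Riesz then yields a unique $u^f\in\dot{V}_{0,\dot{\mathscr{D}}}(\R^n\setminus\partial\Omega)$ representing this functional, and the bound $\|u^f\|_{\dot{H}^1(\R^n\setminus\partial\Omega)}\le\|f\|_{\Hpo}$ is read off directly from the operator norm. That $u^f$ genuinely is a weak solution is then immediate: it lies in $\dot{V}_{0,\dot{\mathscr{D}}}(\R^n\setminus\partial\Omega)\subset\dot{V}_0(\R^n\setminus\partial\Omega)$, hence is orthogonal to $C_c^\infty(\R^n\setminus\partial\Omega)$ and, by Lemma~\ref{L:spaceso}~(iv), also to $\dot{V}_{0,\dot{\mathscr{S}}}(\R^n\setminus\partial\Omega)$, while the remaining identity holds by construction. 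Uniqueness follows since the difference of two weak solutions lies in $\dot{V}_{0,\dot{\mathscr{D}}}(\R^n\setminus\partial\Omega)$ and is $\dot{H}^1$-orthogonal to all of it, hence is the zero class.

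Since the argument reduces to Riesz representation, there is no substantial analytic obstacle; the points that need care are purely the bookkeeping ones intrinsic to the homogeneous setting. One has to make sure that the dual pairing $\langle\frac{\dot{\partial}v}{\partial\nu},f\rangle_{\Hmo,\Hpo}$ is well defined on classes modulo (locally) constant functions and that $\dot{V}_{0,\dot{\mathscr{D}}}(\R^n\setminus\partial\Omega)$ is indeed a Hilbert space under the inherited scalar product; both are provided by Lemma~\ref{L:spaceso} and the constructions preceding it, cf.\ also Remark~\ref{R:readjustedversion}. A further minor but essential point is that the boundedness of the functional should be obtained from the \emph{isometry} in Lemma~\ref{L:spaceso}~(iii), not merely from continuity of $\frac{\dot{\partial}}{\partial\nu}$, so that the sharp constant $1$ appears in the norm estimate, exactly as in Lemma~\ref{L:transmiDL}.
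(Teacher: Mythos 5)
Your proposal is correct and follows essentially the same route as the paper, which proves the inhomogeneous analogue (Lemma \ref{L:transmiDL}) by exactly this argument and leaves the dotted case to the analogous reasoning: the two defining conditions place $u$ in $\dot{V}_{0,\dot{\mathscr{D}}}(\R^n\setminus\partial\Omega)$ via Lemma \ref{L:spaceso}~(iv), and the Riesz representation theorem applied to the functional $v\mapsto\langle\frac{\dot{\partial}v}{\partial\nu},f\rangle_{\Hmo,\Hpo}$, bounded with norm at most $\|f\|_{\Hpo}$ by the isometry in Lemma \ref{L:spaceso}~(iii), yields existence, uniqueness and the bound $\|u^f\|_{\dot{H}^1(\R^n\setminus\partial\Omega)}\leq\|f\|_{\Hpo}$. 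Your additional remarks on well-definedness modulo constants and on using the isometry to get the sharp constant are accurate and consistent with the paper's setup.
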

We refer to the bounded linear operator $\dot{\Dcal}:\Hpo\to \dot{V}_{0,\dot{\Dcal}}(\R^n\backslash \partial\Omega)$ defined by $\dot{\Dcal}f:=u^f$
as the \emph{double layer potential operator associated with the transmission problem for $-\Delta$ and $\Omega$ in the $\dot{H}^1$-sense}.
\begin{corollary}\label{C:DLbijectiveo}
Let $\Omega$ be two-sided $\dot{H}^1$-admissible. The operator $\dot{\Dcal}$ is bijective, and its inverse is $\dot{\Dcal}^{-1}=-\llbracket\dTr \rrbracket$.
\end{corollary}

\begin{remark} A priori, the trace jump condition in \eqref{E:transmiDLo} is only an equality modulo additive constants. However, by
arguments similar to Remarks \ref{R:representative} and \ref{R:readjustedversion}, the situation is auto-improving: If $\Omega$ is both two-sided $H^1$- and $\dot{H}^1$-admissible and $f\in\mathcal{B}(\partial\Omega)$ is given, let $u^{[f]}$ be the unique weak solution in the $\dot{H}^1$-sense of \eqref{E:transmiDLo} with the equivalence class $[f]\in\Hpo$ of $f$ modulo constants in place of $f$.
We can find representatives $w_i\in u^{[f]}_i$ and $w_e\in u^{[f]}_e$ modulo constants and some $c\in\R$ such that $\TTr_iw_i-\TTr_ew_e=-f+c$ in $\Hp$. We set $w':=w_i+w_e+c\mathds{1}_{\R^n\backslash \overline{\Omega}}$. The equivalence class $\overline{u}^{[f]}$ of $w'$ modulo a single constant is uniquely determined.
It satisfies $\Delta \overline{u}^{[f]}=0$ on $\R^n\backslash\partial\Omega$, $\llbracket\Tr \overline{u}^{[f]}\rrbracket=-f$ as an equality in $\Hp$ and 
$\llbracket \dot\partial_\nu\overline{u}^{[f]}\rrbracket=0$, which by Remark \ref{R:BprimeB} may be interpreted as an equality of linear functionals on $\Hp$. One could call $\overline{u}^{[f]}$ the \emph{trace jump readjusted representative of $u^{[f]}$}.
\end{remark}

\subsection{Single layer potentials}\label{Subsec:SLP}

Let $\Omega$ be two-sided $H^1$-admissible. Given $g\in \Hm$, we call $u\in H^1(\R^n\backslash \partial\Omega)$ a \emph{weak solution} of \eqref{EqTrProblem} with $k=1$ and $f=0$, that is, 
\begin{equation}\label{E:transmiSL}
\begin{cases}
-\Delta u+ u &= 0 \quad \text{on $\R^n\backslash \partial\Omega$}\\
u_i|_{\partial\Omega}-u_e|_{\partial\Omega}&=0\\
\frac{\partial_iu_i}{\partial\nu}|_{\partial\Omega}-\frac{\partial_eu_e}{\partial \nu}|_{\partial\Omega}&= g,
\end{cases}
\end{equation}
if $\left\langle u,v\right\rangle_{H^1(\R^n\backslash\partial\Omega)}=0$ for all $v\in V_{1,\Dcal}(\R^n\backslash \partial\Omega)$, as well as $\left\langle u,v\right\rangle_{H^1(\R^n\backslash\partial\Omega)}=\left\langle g, \Tr v\right\rangle_{\Hm,\Hp}$ for all $v\in H^1(\R^n)$.

\begin{lemma}\label{L:transmiSL}
Let $\Omega$ be two-sided $H^1$-admissible. For any $g\in \Hm$, there is a unique weak solution $u_g$ of \eqref{E:transmiSL}. It is in $V_{1,\mathcal{S}}(\R^n\backslash \partial\Omega)$ and satisfies $\|u_g\|_{H^1(\R^n\backslash \partial\Omega)}\leq \|g\|_{\Hm}$.
\end{lemma}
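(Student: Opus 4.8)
The plan is to construct $u_g$ directly via the Riesz representation theorem on the closed subspace $V_{1,\mathscr{S}}(\mathbb{R}^n\setminus\partial\Omega)$ and then to verify that it is the unique weak solution. First I would recall from Lemma~\ref{L:spaces}~(ii) that the trace operator $\Tr\colon V_{1,\mathscr{S}}(\mathbb{R}^n\setminus\partial\Omega)\to\Hp$ is an isometric isomorphism. Hence the linear functional $F(v):=\langle g,\Tr v\rangle_{\Hm,\Hp}$ on $V_{1,\mathscr{S}}(\mathbb{R}^n\setminus\partial\Omega)$ is bounded, with $|F(v)|\le\|g\|_{\Hm}\|v\|_{H^1(\mathbb{R}^n\setminus\partial\Omega)}$, and by Riesz representation there is a unique $u_g\in V_{1,\mathscr{S}}(\mathbb{R}^n\setminus\partial\Omega)$ with
\[
\langle u_g,v\rangle_{H^1(\mathbb{R}^n\setminus\partial\Omega)}=\langle g,\Tr v\rangle_{\Hm,\Hp}\qquad\text{for all }v\in V_{1,\mathscr{S}}(\mathbb{R}^n\setminus\partial\Omega),
\]
and $\|u_g\|_{H^1(\mathbb{R}^n\setminus\partial\Omega)}=\|F\|\le\|g\|_{\Hm}$; since $\Tr$ is onto $\Hp$ this is in fact an equality.

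Next I would check that $u_g$ is a weak solution of \eqref{E:transmiSL}. The first defining identity, $\langle u_g,v\rangle_{H^1(\mathbb{R}^n\setminus\partial\Omega)}=0$ for $v\in V_{1,\mathscr{D}}(\mathbb{R}^n\setminus\partial\Omega)$, is immediate from $u_g\in V_{1,\mathscr{S}}(\mathbb{R}^n\setminus\partial\Omega)$ and the orthogonality $V_{1,\mathscr{S}}\perp V_{1,\mathscr{D}}$ of Lemma~\ref{L:spaces}~(iv). For the second identity, let $v\in H^1(\mathbb{R}^n)$ and use the orthogonal decomposition $H^1(\mathbb{R}^n)=H_0^1(\mathbb{R}^n\setminus\partial\Omega)\oplus V_{1,\mathscr{S}}(\mathbb{R}^n\setminus\partial\Omega)$ of Lemma~\ref{L:spaces}~(i), writing $v=v_0+v_1$. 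Since $u_g\in V_{1,\mathscr{S}}(\mathbb{R}^n\setminus\partial\Omega)\subset V_1(\mathbb{R}^n\setminus\partial\Omega)$ is orthogonal to $H_0^1(\mathbb{R}^n\setminus\partial\Omega)$ and $\TTr v_0=0$ because $v_0\in\ker\TTr$, we obtain $\langle u_g,v\rangle_{H^1(\mathbb{R}^n\setminus\partial\Omega)}=\langle u_g,v_1\rangle_{H^1(\mathbb{R}^n\setminus\partial\Omega)}=\langle g,\Tr v_1\rangle_{\Hm,\Hp}=\langle g,\TTr v\rangle_{\Hm,\Hp}$, using $\TTr v=\TTr v_1=\Tr v_1$. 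Thus $u_g$ is a weak solution and lies in $V_{1,\mathscr{S}}(\mathbb{R}^n\setminus\partial\Omega)$, as asserted.

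Finally, for uniqueness let $w$ be the difference of two weak solutions, so that $\langle w,v\rangle_{H^1(\mathbb{R}^n\setminus\partial\Omega)}=0$ both for all $v\in V_{1,\mathscr{D}}(\mathbb{R}^n\setminus\partial\Omega)$ and for all $v\in H^1(\mathbb{R}^n)$. The first family of identities places $w$ in the orthogonal complement of $V_{1,\mathscr{D}}(\mathbb{R}^n\setminus\partial\Omega)$ in $H^1(\mathbb{R}^n\setminus\partial\Omega)$, which by Lemma~\ref{L:spaces}~(i) and (iv) equals $H^1(\mathbb{R}^n)$; in particular $w\in H^1(\mathbb{R}^n)$, and taking $v=w$ in the second family gives $\|w\|_{H^1(\mathbb{R}^n\setminus\partial\Omega)}^2=0$, so $w=0$. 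I do not expect any genuine analytic obstacle here: the work is entirely bookkeeping — the identification of $H^1(\mathbb{R}^n)$ with the zero-trace-jump subspace of $H^1(\mathbb{R}^n\setminus\partial\Omega)$, the compatibility $\Tr=\TTr$ on $V_{1,\mathscr{S}}(\mathbb{R}^n\setminus\partial\Omega)$, and the identity $\bigl(V_{1,\mathscr{D}}(\mathbb{R}^n\setminus\partial\Omega)\bigr)^\perp=H^1(\mathbb{R}^n)$ — all of which are contained in Lemma~\ref{L:spaces}, together with the single application of the Riesz representation theorem.
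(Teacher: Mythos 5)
Your argument is correct and is essentially the paper's own proof: the paper likewise notes that $v\mapsto\langle g,\TTr v\rangle_{\Hm,\Hp}$ is bounded on the closed subspace $V_{1,\mathscr{S}}(\mathbb{R}^n\setminus\partial\Omega)$ (via Lemma \ref{L:spaces}) and applies the Riesz representation theorem there. The additional verifications you spell out — orthogonality to $V_{1,\mathscr{D}}(\mathbb{R}^n\setminus\partial\Omega)$, the decomposition $H^1(\mathbb{R}^n)=H_0^1(\mathbb{R}^n\setminus\partial\Omega)\oplus V_{1,\mathscr{S}}(\mathbb{R}^n\setminus\partial\Omega)$, and uniqueness — are exactly the details the paper leaves implicit, and they are carried out correctly.
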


\begin{proof}
By Lemma \ref{L:spaces} (ii) the linear functional $v\mapsto \left\langle g, \Tr v\right\rangle_{\Hm,\Hp}$ is bounded on the closed subspace $V_{1,\mathcal{S}}(\R^n\backslash \partial\Omega)$ of 
$H^1(\R^n\backslash \partial\Omega)$, so the result follows from the Riesz representation theorem once again.
\end{proof} 

We refer to the bounded linear operator $\Scal:\Hm\to V_{1,\mathcal{S}}(\R^n\backslash \partial\Omega)$, defined by 
\[\Scal g:=u_g\]
as the \emph{single layer potential operator associated with the transmission problem for $1-\Delta$ and $\Omega$}.
\begin{corollary}\label{C:SLbijective}
Let $\Omega$ be two-sided $H^1$-admissible. The operator $\Scal$ is bijective, and its inverse is $\Scal^{-1}=\llbracket\partial_\nu\rrbracket$.
\end{corollary} 

Suppose that $\Omega$ is two-sided $\dot{H}^1$-admissible. Given $g\in \Hmo$, we call $u\in \dot{H}^1(\R^n\backslash \partial\Omega)$ a \emph{weak solution in the $\dot{H}^1$-sense} of \eqref{EqTrProblem} with $k=0$ and $f=0$, that is, 
\begin{equation}\label{E:transmiSLo}
\begin{cases}
-\Delta u &= 0 \quad \text{on $\R^n\backslash \partial\Omega$}\\
u_i|_{\partial\Omega}-u_e|_{\partial\Omega}&=0\\
\frac{\partial_iu_i}{\partial\nu}|_{\partial\Omega}-\frac{\partial_eu_e}{\partial \nu}|_{\partial\Omega}&= g,
\end{cases}
\end{equation}
if $\left\langle u,v\right\rangle_{\dot{H}^1(\R^n\backslash\partial\Omega)}=0$ for all $v\in \dot{V}_{0,\dot{\Dcal}}(\R^n\backslash \partial\Omega)$, as well as $\left\langle u,v\right\rangle_{\dot{H}^1(\R^n\backslash\partial\Omega)}=\big\langle g, \dTTr v\big\rangle_{\Hmo,\Hpo}$ for all $v\in \dot{H}^1(\R^n)$.
 
\begin{lemma}\label{L:transmiSLo}
Let $\Omega$ be two-sided $\dot{H}^1$-admissible. For any $g\in \Hmo$, there is a unique weak solution $u_g$ of \eqref{E:transmiSLo} in the $\dot{H}^1$-sense. It is an element of $\dot{V}_{0,\dot{\mathcal{S}}}(\R^n\backslash \partial\Omega)$ and satisfies $\|u_g\|_{\dot{H}^1(\R^n\backslash \partial\Omega)}\leq \|g\|_{\Hmo}$.
\end{lemma}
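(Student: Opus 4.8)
The plan is to follow the proof of Lemma \ref{L:transmiSL} line by line, substituting each $H^1$-ingredient by its $\dot H^1$-analog from Lemma \ref{L:spaceso}. \textbf{Existence and norm bound.} First I would note that on the closed subspace $\dot V_{0,\dot{\mathscr S}}(\R^n\setminus\partial\Omega)$ of $\dot H^1(\R^n\setminus\partial\Omega)$ the linear functional $v\mapsto\langle g,\dTTr v\rangle_{\Hmo,\Hpo}$ is bounded: by Lemma \ref{L:spaceso} (ii) the restriction $\dTr$ of $\dTTr$ to this subspace is an isometry onto $\Hpo$, so $|\langle g,\dTTr v\rangle_{\Hmo,\Hpo}|\le\|g\|_{\Hmo}\|\dTr v\|_{\Hpo}=\|g\|_{\Hmo}\|v\|_{\dot H^1(\R^n\setminus\partial\Omega)}$. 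The Riesz representation theorem then produces a unique $u_g\in\dot V_{0,\dot{\mathscr S}}(\R^n\setminus\partial\Omega)$ with $\langle u_g,v\rangle_{\dot H^1(\R^n\setminus\partial\Omega)}=\langle g,\dTTr v\rangle_{\Hmo,\Hpo}$ for every $v$ in that subspace and with $\|u_g\|_{\dot H^1(\R^n\setminus\partial\Omega)}\le\|g\|_{\Hmo}$; this already yields the asserted membership in $\dot V_{0,\dot{\mathscr S}}(\R^n\setminus\partial\Omega)$ and the norm estimate.

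\textbf{$u_g$ is a weak solution.} Next I would verify the two defining conditions of \eqref{E:transmiSLo}. Since $u_g\in\dot V_{0,\dot{\mathscr S}}(\R^n\setminus\partial\Omega)$, the orthogonal decomposition of Lemma \ref{L:spaceso} (iv) gives $\langle u_g,v\rangle_{\dot H^1(\R^n\setminus\partial\Omega)}=0$ for all $v\in\dot V_{0,\dot{\mathscr D}}(\R^n\setminus\partial\Omega)$, which is the first condition. For the second I would take $v\in\dot H^1(\R^n)=\ker\llbracket\dTTr\rrbracket$ and split it, using Lemma \ref{L:spaceso} (i), as $v=v_0+v_1$ with $v_0$ in the closure of $C_c^\infty(\R^n\setminus\partial\Omega)$ and $v_1\in\dot V_{0,\dot{\mathscr S}}(\R^n\setminus\partial\Omega)$. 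For $v_0$ both sides vanish: $u_g\in\dot V_0(\R^n\setminus\partial\Omega)$ is orthogonal to $\dot H^1_0(\R^n\setminus\partial\Omega)$, and $\dTTr v_0=0$ by continuity of $\dTTr$ together with the fact that a smooth function with support away from $\partial\Omega$ has zero trace on $\partial\Omega$. For $v_1$ the identity holds by the previous step. Adding the two contributions gives $\langle u_g,v\rangle_{\dot H^1(\R^n\setminus\partial\Omega)}=\langle g,\dTTr v\rangle_{\Hmo,\Hpo}$ for all $v\in\dot H^1(\R^n)$.

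\textbf{Uniqueness.} Finally, if $u$ and $u'$ are two weak solutions, then $w:=u-u'$ is orthogonal to $\dot V_{0,\dot{\mathscr D}}(\R^n\setminus\partial\Omega)$ and satisfies $\langle w,v\rangle_{\dot H^1(\R^n\setminus\partial\Omega)}=0$ for all $v\in\dot H^1(\R^n)$; testing the latter with $v\in C_c^\infty(\R^n\setminus\partial\Omega)$ shows $w\in\dot V_0(\R^n\setminus\partial\Omega)$, whence $w\in\dot V_{0,\dot{\mathscr S}}(\R^n\setminus\partial\Omega)$ by Lemma \ref{L:spaceso} (iv). Since $\dot V_{0,\dot{\mathscr S}}(\R^n\setminus\partial\Omega)\subset\ker\llbracket\dTTr\rrbracket=\dot H^1(\R^n)$, I may test with $v=w$ and conclude $\|w\|_{\dot H^1(\R^n\setminus\partial\Omega)}^2=0$, i.e. $w=0$; thus $u_g$ is the unique weak solution.

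\textbf{Main obstacle.} The only point requiring genuine care, in contrast with the $H^1$-version, is the bookkeeping of additive constants: $\dot H^1(\R^n\setminus\partial\Omega)$ is a space modulo two constants whereas $\dot H^1(\R^n)$ is modulo one, so the identifications $\dot V_{0,\dot{\mathscr S}}(\R^n\setminus\partial\Omega)\subset\ker\llbracket\dTTr\rrbracket$ and $\ker\llbracket\dTTr\rrbracket=\dot H^1(\R^n)$ must be read through the zero trace jump readjusted representatives of Remark \ref{R:readjustedversion}. Once those conventions and Lemma \ref{L:spaceso} are in force, every step above is routine Hilbert space manipulation.
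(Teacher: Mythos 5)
Your proof is correct and takes essentially the same route as the paper: the paper proves the inhomogeneous analogue, Lemma \ref{L:transmiSL}, by exactly this Riesz-representation argument on the closed subspace (there $V_{1,\mathcal{S}}$, here $\dot{V}_{0,\dot{\mathscr{S}}}(\R^n\setminus\partial\Omega)$ via Lemma \ref{L:spaceso} (ii)) and leaves the homogeneous case to the same reasoning, which you have carried out together with the verification of the weak-solution conditions and the uniqueness step that the paper leaves implicit. Your closing remark is also the right reading: the identification of $\ker\llbracket\dTTr\rrbracket$ with $\dot{H}^1(\mathbb{R}^n)$ is exactly what the zero trace jump readjusted representatives of Remark \ref{R:readjustedversion} provide.
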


We refer to the bounded linear operator $\dot{\Scal}:\Hmo\to \dot{V}_{0,\dot{\mathcal{S}}}(\R^n\backslash \partial\Omega)$ defined by $\dot{\Scal}g:=u_g$ as the \emph{single layer potential operator associated with the transmission problem for $-\Delta$ and $\Omega$ in the $\dot{H}^1$-sense.}

\begin{corollary}\label{C:SLbijectiveo}
Let $\Omega$ be two-sided $\dot{H}^1$-admissible. The operator $\dot{\Scal}$ is bijective, and its inverse is $\dot{\Scal}^{-1}=\llbracket\dot\partial_\nu\rrbracket$.
\end{corollary} 

\begin{remark}\label{R:readjustsingle}
Let $\Omega$ be both two-sided $H^1$- and $\dot{H}^1$-admissible, $g\in \Hmo$ and let $u_g$ be as above. Let $\overline{u}_g\in \overline{V}_{0,\dot{\Scal}}(\R^n\backslash \partial\Omega)$ be the zero trace jump readjusted representative of $u_g$ as in Remark \ref{R:readjustedversion}. Since it is uniquely determined, $\overline{\Scal}g:=\overline{u}_g$ defines a bounded linear map 
\begin{equation}\label{E:readjustsingle}
\overline{\Scal}:\Hmo\to \overline{V}_{0,\dot{\Scal}}(\R^n\backslash \partial\Omega);
\end{equation}
it is a zero trace jump readjusted variant of $\dot{\Scal}$.
\end{remark}

\subsection{Superposition}\label{Subsec:Superposition}

Superposition gives well-posedness for \eqref{EqTrProblem}. Let $\Omega$ be $H^1$-admissible. Given $f\in \Hp$ and $g\in \Hm$, we call $u\in H^1(\R^n\backslash \partial\Omega)$ a \emph{weak solution} for \eqref{EqTrProblem} with $k=1$ if $\left\langle u,v\right\rangle_{H^1(\R^n\backslash \partial\Omega)}=0$ for all $v\in C_c^\infty(\R^n\backslash\partial\Omega)$, $\left\langle u,v\right\rangle_{H^1(\R^n\backslash \partial\Omega)}=\left\langle g,\Tr v\right\rangle_{\Hm,\Hp}$ for all $v\in V_{1,\Scal}(\R^n\backslash \partial\Omega)$, as well as $\left\langle u,v\right\rangle_{H^1(\R^n\backslash \partial\Omega)}=\left\langle \partial_\nu v, f\right\rangle_{\Hm,\Hp}$ for all $v\in V_{1,\Dcal}(\R^n\backslash \partial\Omega)$.

\begin{corollary}\label{C:superpos}
Let $\Omega$ be $H^1$-admissible. For any $f\in \Hp$ and $g\in \Hm$, the unique weak solution $u$ of \eqref{EqTrProblem} with $k=1$ is $u=\Scal g-\Dcal f$. It is an element of $V_1(\R^n\backslash \partial\Omega)$ and satisfies $$\|u\|_{H^1(\R^n\backslash \partial\Omega)}\leq \|f\|_{\Hp}+\|g\|_{\Hm}.$$
\end{corollary}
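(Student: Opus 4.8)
The plan is to verify directly that $u:=\mathscr{D}f+\mathscr{S}g$ satisfies the three defining identities of a weak solution of \eqref{EqTrProblem} with $k=1$, and then to read off uniqueness from a density argument. First I would assemble the building blocks. By Lemma~\ref{L:transmiDL}, $\mathscr{D}f\in V_{1,\mathscr{D}}(\R^n\setminus\partial\Omega)$, and by Lemma~\ref{L:transmiSL}, $\mathscr{S}g\in V_{1,\mathscr{S}}(\R^n\setminus\partial\Omega)$; hence, by the orthogonal decomposition in Lemma~\ref{L:spaces}(iv), $u\in V_1(\R^n\setminus\partial\Omega)$, with $P_{1,\mathscr{D}}u=\mathscr{D}f$ and $P_{1,\mathscr{S}}u=\mathscr{S}g$. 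Using the linearity of the trace-jump and normal-derivative-jump maps together with Corollaries~\ref{C:DLbijective} and~\ref{C:SLbijective} --- and that elements of $V_{1,\mathscr{S}}(\R^n\setminus\partial\Omega)$ have vanishing trace jump while elements of $V_{1,\mathscr{D}}(\R^n\setminus\partial\Omega)$ have vanishing normal-derivative jump --- I would record
\[\big\llbracket\TTr u\big\rrbracket=\big\llbracket\TTr(\mathscr{D}f)\big\rrbracket+\big\llbracket\TTr(\mathscr{S}g)\big\rrbracket=-f,\qquad \Big\llbracket\ddn{u}\Big\rrbracket=\Big\llbracket\ddn{(\mathscr{D}f)}\Big\rrbracket+\Big\llbracket\ddn{(\mathscr{S}g)}\Big\rrbracket=g.\]

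Next I would verify the three identities. Since $u\in V_1(\R^n\setminus\partial\Omega)$, which is the orthogonal complement of $H^1_0(\R^n\setminus\partial\Omega)$, and $C_c^\infty(\R^n\setminus\partial\Omega)\subset H^1_0(\R^n\setminus\partial\Omega)$, the first identity $\langle u,v\rangle_{H^1(\R^n\setminus\partial\Omega)}=0$ for all $v\in C_c^\infty(\R^n\setminus\partial\Omega)$ is immediate. For the remaining two I would feed the jumps just computed into Corollary~\ref{C:orthoV}: for every $v\in V_1(\R^n\setminus\partial\Omega)$,
\[\langle u,v\rangle_{H^1(\R^n\setminus\partial\Omega)}=\big\langle g,\ \Tr\circ P_{1,\mathscr{S}}v\big\rangle_{\Hm,\Hp}-\Big\langle\ddn{}\circ P_{1,\mathscr{D}}v,\ f\Big\rangle_{\Hm,\Hp}.\]
Specialising $v$ to $V_{1,\mathscr{S}}(\R^n\setminus\partial\Omega)$, where $P_{1,\mathscr{S}}v=v$, $P_{1,\mathscr{D}}v=0$ and $\Tr v=\TTr v$, gives the second defining identity $\langle u,v\rangle_{H^1(\R^n\setminus\partial\Omega)}=\langle g,\TTr v\rangle_{\Hm,\Hp}$; specialising $v$ to $V_{1,\mathscr{D}}(\R^n\setminus\partial\Omega)$, where $P_{1,\mathscr{S}}v=0$, $P_{1,\mathscr{D}}v=v$, and $\ddn{}\circ P_{1,\mathscr{D}}v$ is the common interior/exterior normal derivative $\ddny{v}{i}=\ddny{v}{e}$, gives the third. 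Thus $u=\mathscr{D}f+\mathscr{S}g$ is a weak solution lying in $V_1(\R^n\setminus\partial\Omega)$; and since $\mathscr{D}f\perp\mathscr{S}g$, Lemmas~\ref{L:transmiDL} and~\ref{L:transmiSL} yield $\|u\|_{H^1(\R^n\setminus\partial\Omega)}^2=\|\mathscr{D}f\|_{H^1(\R^n\setminus\partial\Omega)}^2+\|\mathscr{S}g\|_{H^1(\R^n\setminus\partial\Omega)}^2\le\|f\|_{\Hp}^2+\|g\|_{\Hm}^2$, which in particular gives the stated estimate $\|u\|_{H^1(\R^n\setminus\partial\Omega)}\le\|f\|_{\Hp}+\|g\|_{\Hm}$.

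For uniqueness I would note that the three defining identities prescribe the bounded linear functional $\langle u,\cdot\rangle_{H^1(\R^n\setminus\partial\Omega)}$ on $C_c^\infty(\R^n\setminus\partial\Omega)$, on $V_{1,\mathscr{S}}(\R^n\setminus\partial\Omega)$ and on $V_{1,\mathscr{D}}(\R^n\setminus\partial\Omega)$, hence on their linear span; by Lemma~\ref{L:spaces} the closure of this span is $H^1_0(\R^n\setminus\partial\Omega)\oplus V_1(\R^n\setminus\partial\Omega)=H^1(\R^n\setminus\partial\Omega)$, since $C_c^\infty(\R^n\setminus\partial\Omega)$ is dense in $H^1_0(\R^n\setminus\partial\Omega)$ and $V_1(\R^n\setminus\partial\Omega)=V_{1,\mathscr{S}}(\R^n\setminus\partial\Omega)\oplus V_{1,\mathscr{D}}(\R^n\setminus\partial\Omega)$. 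So the Riesz representation theorem forces $u$ to be unique; equivalently, the difference of two weak solutions is orthogonal to a dense subspace of $H^1(\R^n\setminus\partial\Omega)$, hence zero.

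This is in essence a superposition argument and I do not expect a genuine obstacle. The one point requiring care is the translation between the two descriptions, namely the weak formulation of \eqref{EqTrProblem} on the one hand and the abstract jump identities on the other: one must check that specialising the test function in Corollary~\ref{C:orthoV} to $V_{1,\mathscr{S}}(\R^n\setminus\partial\Omega)$ and to $V_{1,\mathscr{D}}(\R^n\setminus\partial\Omega)$ reproduces the defining identities verbatim, which rests on the identifications $\Tr v=\TTr v$ on $V_{1,\mathscr{S}}(\R^n\setminus\partial\Omega)$ and $\ddn{}\circ P_{1,\mathscr{D}}v=\ddny{v}{i}=\ddny{v}{e}$ on $V_{1,\mathscr{D}}(\R^n\setminus\partial\Omega)$, and on keeping track of the sign carried by the trace-jump normalisation $\big\llbracket\TTr u\big\rrbracket=-f$. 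Once the orthogonal splitting $V_1=V_{1,\mathscr{S}}\oplus V_{1,\mathscr{D}}$ from Lemma~\ref{L:spaces}(iv) and the identity of Corollary~\ref{C:orthoV} are in hand, the rest is bookkeeping and an application of the Riesz representation theorem.
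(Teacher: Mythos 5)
Your argument is correct and is essentially the superposition proof the paper intends for this corollary (which it states without proof): membership of $\mathscr{D}f$ and $\mathscr{S}g$ in the orthogonal summands $V_{1,\mathscr{D}}(\R^n\setminus\partial\Omega)$ and $V_{1,\mathscr{S}}(\R^n\setminus\partial\Omega)$, the jump identities from Corollaries \ref{C:DLbijective} and \ref{C:SLbijective}, Corollary \ref{C:orthoV} to verify the three defining conditions, and a Riesz/density argument for uniqueness, with your Pythagorean estimate even sharpening the stated bound. The only point worth recording is interpretive: you (rightly) read the symbol $\big\llbracket\frac{\partial v}{\partial\nu}\big\rrbracket$ in the third defining condition of Subsection \ref{Subsec:Superposition} as the common one-sided normal derivative $\frac{\partial v}{\partial\nu}$ on $V_{1,\mathscr{D}}(\R^n\setminus\partial\Omega)$ (the literal jump vanishes there), and you take $\llbracket\TTr\,\mathscr{D}f\rrbracket=-f$ from Corollary \ref{C:DLbijective}, which is the sign convention consistent with \eqref{EqTrProblem} and Theorem \ref{T:K}, though not with the sign printed in the weak formulation of \eqref{E:transmiDL}.
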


If $\Omega$ is $\dot{H}^1$-admissible, and if $f\in \Hpo$ and $g\in \Hmo$, then we call $u\in \dot{H}^1(\R^n\backslash \partial\Omega)$ a \emph{weak solution} for \eqref{EqTrProblem} with $k=0$ in the $\dot{H}^1$-sense if $\left\langle u,v\right\rangle_{\dot{H}^1(\R^n\backslash \partial\Omega)}=0$ for all $v\in C_c^\infty(\R^n\backslash\partial\Omega)$, $\left\langle u,v\right\rangle_{\dot{H}^1(\R^n\backslash \partial\Omega)}=\big\langle g,\dTr v\big\rangle_{\Hmo,\Hpo}$ for all $v\in \dot{V}_{0,\dot{\Scal}}(\R^n\backslash \partial\Omega)$ and we have $\left\langle u,v\right\rangle_{\dot{H}^1(\R^n\backslash \partial\Omega)}=\langle \dot\partial_\nu v, f\rangle_{\Hmo,\Hpo}$ for all $v\in \dot{V}_{0,\dot{\Dcal}}(\R^n\backslash \partial\Omega)$.

\begin{corollary}\label{C:superposo}
Let $\Omega$ be two-sided $\dot{H}^1$-admissible. For any $f\in \Hpo$ and $g\in \Hmo$, the unique weak solution $u$ in the $\dot{H}^1$-sense of \eqref{EqTrProblem} with $k=0$ is $u=\dot{\Scal}g-\dot{\Dcal}f$. It is an element of $\dot{V}_0(\R^n\backslash \partial\Omega)$ and satisfies $$\|u\|_{\dot{H}^1(\R^n\backslash \partial\Omega)}\leq \|f\|_{\Hpo}+\|g\|_{\Hmo}.$$
\end{corollary}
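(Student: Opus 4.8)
The plan is to mirror the proof of Corollary~\ref{C:superpos}, replacing $H^1$ by $\dot{H}^1$, the subspaces $V_{1,\mathscr{S}},V_{1,\mathscr{D}}$ by $\dot{V}_{0,\dot{\mathscr{S}}},\dot{V}_{0,\dot{\mathscr{D}}}$, and the operators $\mathscr{S},\mathscr{D}$ by $\dot{\mathscr{S}},\dot{\mathscr{D}}$. First I would \emph{check that $u:=\dot{\mathscr{D}}f+\dot{\mathscr{S}}g$ is a weak solution in the $\dot{H}^1$-sense}. By Lemma~\ref{L:transmiSLo} and the analogous statement for the double layer potential preceding Corollary~\ref{C:DLbijectiveo}, we have $\dot{\mathscr{S}}g\in\dot{V}_{0,\dot{\mathscr{S}}}(\mathbb{R}^n\setminus\partial\Omega)$ and $\dot{\mathscr{D}}f\in\dot{V}_{0,\dot{\mathscr{D}}}(\mathbb{R}^n\setminus\partial\Omega)$, both of which are subspaces of $\dot{V}_0(\mathbb{R}^n\setminus\partial\Omega)$; hence $u\in\dot{V}_0(\mathbb{R}^n\setminus\partial\Omega)$, and since $\dot{V}_0(\mathbb{R}^n\setminus\partial\Omega)$ is orthogonal to the closure of $C_c^\infty(\mathbb{R}^n\setminus\partial\Omega)$, we get $\langle u,v\rangle_{\dot{H}^1(\mathbb{R}^n\setminus\partial\Omega)}=0$ for all $v\in C_c^\infty(\mathbb{R}^n\setminus\partial\Omega)$. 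Testing against $v\in\dot{V}_{0,\dot{\mathscr{S}}}(\mathbb{R}^n\setminus\partial\Omega)$, the cross term $\langle\dot{\mathscr{D}}f,v\rangle_{\dot{H}^1(\mathbb{R}^n\setminus\partial\Omega)}$ vanishes by the orthogonality of $\dot{V}_{0,\dot{\mathscr{D}}}(\mathbb{R}^n\setminus\partial\Omega)$ and $\dot{V}_{0,\dot{\mathscr{S}}}(\mathbb{R}^n\setminus\partial\Omega)$ from Lemma~\ref{L:spaceso}~(iv), while the remaining term $\langle\dot{\mathscr{S}}g,v\rangle_{\dot{H}^1(\mathbb{R}^n\setminus\partial\Omega)}$ reduces to $\langle g,\dTTr v\rangle_{\Hmo,\Hpo}$ by the defining property of $\dot{\mathscr{S}}$ in \eqref{E:transmiSLo}; symmetrically, testing against $v\in\dot{V}_{0,\dot{\mathscr{D}}}(\mathbb{R}^n\setminus\partial\Omega)$ the cross term $\langle\dot{\mathscr{S}}g,v\rangle_{\dot{H}^1(\mathbb{R}^n\setminus\partial\Omega)}$ vanishes and the remaining term is governed by the defining property of $\dot{\mathscr{D}}$ in \eqref{E:transmiDLo}. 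This verifies all three conditions, so $u$ is a weak solution, and the membership $u\in\dot{V}_0(\mathbb{R}^n\setminus\partial\Omega)$ has already been noted.

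For \emph{uniqueness}, suppose $w\in\dot{H}^1(\mathbb{R}^n\setminus\partial\Omega)$ is the difference of two weak solutions in the $\dot{H}^1$-sense; then $w$ satisfies the three variational identities with $f=g=0$. The first forces $w$ to be orthogonal to $C_c^\infty(\mathbb{R}^n\setminus\partial\Omega)$, hence $w\in\dot{V}_0(\mathbb{R}^n\setminus\partial\Omega)$; the second and third force $w\perp\dot{V}_{0,\dot{\mathscr{S}}}(\mathbb{R}^n\setminus\partial\Omega)$ and $w\perp\dot{V}_{0,\dot{\mathscr{D}}}(\mathbb{R}^n\setminus\partial\Omega)$. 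Since these two subspaces span $\dot{V}_0(\mathbb{R}^n\setminus\partial\Omega)$ by Lemma~\ref{L:spaceso}~(iv), $w$ is orthogonal to $\dot{V}_0(\mathbb{R}^n\setminus\partial\Omega)$ while belonging to it, so $\langle w,w\rangle_{\dot{H}^1(\mathbb{R}^n\setminus\partial\Omega)}=0$ and $w=0$. The \emph{norm estimate} then follows from the triangle inequality together with $\|\dot{\mathscr{D}}f\|_{\dot{H}^1(\mathbb{R}^n\setminus\partial\Omega)}\le\|f\|_{\Hpo}$ and $\|\dot{\mathscr{S}}g\|_{\dot{H}^1(\mathbb{R}^n\setminus\partial\Omega)}\le\|g\|_{\Hmo}$ from the single and double layer lemmas; in fact, since $\dot{\mathscr{D}}f$ and $\dot{\mathscr{S}}g$ are orthogonal by Lemma~\ref{L:spaceso}~(iv), one even gets $\|u\|_{\dot{H}^1(\mathbb{R}^n\setminus\partial\Omega)}^2\le\|f\|_{\Hpo}^2+\|g\|_{\Hmo}^2$.

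The step requiring the most care is the reduction of $\langle\dot{\mathscr{S}}g,v\rangle_{\dot{H}^1(\mathbb{R}^n\setminus\partial\Omega)}$ to $\langle g,\dTTr v\rangle_{\Hmo,\Hpo}$ for $v\in\dot{V}_{0,\dot{\mathscr{S}}}(\mathbb{R}^n\setminus\partial\Omega)$ (and its double layer counterpart): the defining identity for $\dot{\mathscr{S}}$ in \eqref{E:transmiSLo} is phrased with test functions in $\dot{H}^1(\mathbb{R}^n)$, whereas a general element of $\dot{V}_{0,\dot{\mathscr{S}}}(\mathbb{R}^n\setminus\partial\Omega)$ is only matched with such after passing to its zero trace jump readjusted representative, cf.\ Remark~\ref{R:readjustedversion}. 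Since the $\dot{H}^1$ inner product sees only gradients and the trace depends on boundary values only modulo constants, this readjustment changes neither side, so the reduction goes through; this is the homogeneous-space analogue of the essentially automatic corresponding step in Corollary~\ref{C:superpos}, and the remaining bookkeeping is identical in form to the inhomogeneous case.
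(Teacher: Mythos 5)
Your argument is correct and is exactly the verification the paper leaves implicit: membership of $\dot{\mathscr{D}}f+\dot{\mathscr{S}}g$ in $\dot{V}_0(\mathbb{R}^n\setminus\partial\Omega)$, the three test conditions via the defining properties of the potentials together with the orthogonal decomposition of Lemma \ref{L:spaceso} (iv) (including the readjustment subtlety for test functions in $\dot{V}_{0,\dot{\mathscr{S}}}(\mathbb{R}^n\setminus\partial\Omega)$), uniqueness from the same decomposition, and the norm bound from the layer potential lemmas. Only at the condition for $v\in\dot{V}_{0,\dot{\mathscr{D}}}(\mathbb{R}^n\setminus\partial\Omega)$ should you be explicit that the defining property of $\dot{\mathscr{D}}$ yields $\big\langle\dot{\mathscr{D}}f,v\big\rangle_{\dot{H}^1(\mathbb{R}^n\setminus\partial\Omega)}=\big\langle \frac{\dot{\partial} v}{\partial\nu},f\big\rangle_{\Hmo,\Hpo}$, i.e.\ the right-hand side involves the common one-sided normal derivative of $v$ (the jump bracket written in the superposition definition is vacuous on $\dot{V}_{0,\dot{\mathscr{D}}}(\mathbb{R}^n\setminus\partial\Omega)$), so that your phrase \emph{is governed by} becomes an actual identity consistent with \eqref{E:transmiDLo}.
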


\section{Resolvent representations}\label{Sec:Resolvent}

We give resolvent representations of the layer potential operators defined in the variational sense in Section~\ref{Sec:TrProb}. This allows to recover the classical integral formulas when the domain $\Omega$ is a Lipschitz domain and the boundary is endowed with the surface measure.

\subsection{Representations of single layer potentials}\label{Subsec:Reso-SLP}

Let $u\mapsto \Gcal u=\big((1+|\xi|^2)^{-1}\hat{u}\big)^\vee$ be the Bessel potential operator of order $2$, where $u\mapsto \hat{u}$ denotes the Fourier transform on tempered distributions and $u\mapsto \check{u}$ its inverse. It is well known that $\Gcal=(I-\Delta)^{-1}$ in this distributional sense, that $\Gcal$ is bounded on $L^2(\R^n)$ and that $\Gcal$ acts as an isometric isomorphism from $H^{-1}(\R^n)$ onto $H^1(\R^n)$.  

If $\Omega\subset \R^n$ is two-sided $H^1$-admissible, then $\Gcal$, viewed on $L^2(\R^n)$, is the resolvent operator uniquely associated with the symmetric bilinear form \eqref{E:sp} when endowed with the domain $H^1(\R^n)=H^1_0(\R^n\backslash \partial\Omega)\oplus V_{1,\Scal}(\R^n\backslash \partial\Omega)$.

Using a similar agreement as in Remark \ref{R:extendbyzero} together with Lemma \ref{L:spaces} (ii), the adjoint $\Tr^\ast:\Hm\to V_{1,\Scal}'(\R^n\backslash \partial\Omega)$ of the restricted trace operator $\Tr$ as in Lemma \ref{L:spaces} (ii)
can be viewed as a bounded linear operator $\Tr^\ast:\Hm\to H^{-1}(\R^n)$; it is characterized by 
\begin{equation}\label{E:char}
\left\langle g, \TTr v\right\rangle_{\Hm,\Hp}=\left\langle \Tr^\ast g,v\right\rangle_{H^{-1}(\R^n),H^1(\R^n)},\quad v\in H^1(\R^n),\ g\in \Hm.
\end{equation}
We obtain the following representation for the single layer potential operator.

\begin{lemma}\label{L:repSL}
Let $\Omega$ be two-sided $H^1$-admissible. Then $\Scal=\Gcal\circ \Tr^\ast$.
\end{lemma}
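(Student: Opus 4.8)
The plan is to verify directly that, for each $g\in\Hm$, the function $w:=\mathscr{G}(\Tr^\ast g)\in H^1(\R^n)$, viewed as an element of $H^1(\R^n\setminus\partial\Omega)$ by restriction, is the unique weak solution $u_g$ of \eqref{E:transmiSL}. By Lemma~\ref{L:transmiSL} this forces $w=\mathscr{S}g$, and since $g\in\Hm$ is arbitrary, the identity $\mathscr{S}=\mathscr{G}\circ\Tr^\ast$ follows.

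First I would record two characterizing identities. As the isometric isomorphism $H^{-1}(\R^n)\to H^1(\R^n)$ induced by the form~\eqref{E:sp}, the operator $\mathscr{G}$ satisfies
\[\langle\mathscr{G}F,v\rangle_{H^1(\R^n)}=\langle F,v\rangle_{H^{-1}(\R^n),H^1(\R^n)},\qquad F\in H^{-1}(\R^n),\ v\in H^1(\R^n).\]
On the other hand, by the agreement in Remark~\ref{R:extendbyzero} the functional $\Tr^\ast g\in H^{-1}(\R^n)$ vanishes on the summand $H_0^1(\R^n\setminus\partial\Omega)$ of the orthogonal decomposition $H^1(\R^n)=H_0^1(\R^n\setminus\partial\Omega)\oplus V_{1,\mathscr{S}}(\R^n\setminus\partial\Omega)$ from Lemma~\ref{L:spaces}~(i), on which $\TTr$ vanishes as well; consequently $\langle\Tr^\ast g,v\rangle_{H^{-1}(\R^n),H^1(\R^n)}=\langle g,\TTr v\rangle_{\Hm,\Hp}$ for all $v\in H^1(\R^n)$. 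Taking $F=\Tr^\ast g$ in the first identity and combining with the second gives
\begin{equation}\label{E:wSLchar}
\langle w,v\rangle_{H^1(\R^n)}=\langle g,\TTr v\rangle_{\Hm,\Hp},\qquad v\in H^1(\R^n).
\end{equation}

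Next I would check that $w$ meets the two conditions defining a weak solution of \eqref{E:transmiSL}. Because $\partial\Omega$ has zero Lebesgue measure, $\langle w,v\rangle_{H^1(\R^n\setminus\partial\Omega)}=\langle w,v\rangle_{H^1(\R^n)}$ whenever $v$ is (the restriction of) an element of $H^1(\R^n)$. Applying this to $v\in C_c^\infty(\R^n\setminus\partial\Omega)\subset H^1(\R^n)$, for which $\TTr v=0$, identity~\eqref{E:wSLchar} gives $\langle w,v\rangle_{H^1(\R^n\setminus\partial\Omega)}=0$; by density $w$ is orthogonal to $H_0^1(\R^n\setminus\partial\Omega)$, i.e. $w\in V_1(\R^n\setminus\partial\Omega)$. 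Since $w\in H^1(\R^n)$ its interior and exterior traces agree, so $\llbracket\TTr w\rrbracket=0$ and hence $w\in V_{1,\mathscr{S}}(\R^n\setminus\partial\Omega)$. The orthogonal decomposition of Lemma~\ref{L:spaces}~(iv) then yields $\langle w,v\rangle_{H^1(\R^n\setminus\partial\Omega)}=0$ for every $v\in V_{1,\mathscr{D}}(\R^n\setminus\partial\Omega)$, which is the first defining condition; and for $v\in H^1(\R^n)$ the measure-zero observation together with \eqref{E:wSLchar} gives $\langle w,v\rangle_{H^1(\R^n\setminus\partial\Omega)}=\langle g,\TTr v\rangle_{\Hm,\Hp}$, the second. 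Thus $w$ is the weak solution, so $w=\mathscr{S}g$ by Lemma~\ref{L:transmiSL}.

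The argument is essentially a matter of unwinding definitions, and I do not expect a serious obstacle. The only points needing care are the bookkeeping identifications: that $\Tr^\ast g$, a priori an element of $(V_{1,\mathscr{S}}(\R^n\setminus\partial\Omega))'$, may legitimately be read as an element of $H^{-1}(\R^n)$ via the extension-by-zero agreement of Remark~\ref{R:extendbyzero}, and the repeated use of $|\partial\Omega|=0$ to pass freely between the $H^1(\R^n)$- and $H^1(\R^n\setminus\partial\Omega)$-inner products on functions coming from $H^1(\R^n)$. Once these are fixed, both conditions for the weak solution of \eqref{E:transmiSL} drop out of \eqref{E:wSLchar} and Lemma~\ref{L:spaces}, and the uniqueness in Lemma~\ref{L:transmiSL} closes the proof.
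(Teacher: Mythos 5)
Your argument is correct and rests on exactly the same computation as the paper's proof, namely the chain $\left\langle \mathscr{G}\circ\Tr^\ast g,v\right\rangle_{H^1(\R^n)}=\left\langle \Tr^\ast g,v\right\rangle_{H^{-1}(\R^n),H^1(\R^n)}=\left\langle g,\TTr v\right\rangle_{\Hm,\Hp}$ for all $v\in H^1(\R^n)$, justified by the extension-by-zero agreement of Remark \ref{R:extendbyzero}. The only (harmless) difference is in how you close: the paper compares this directly with $\left\langle \mathscr{S}g,v\right\rangle_{H^1(\R^n)}$ using the defining property of $\mathscr{S}g$ and nondegeneracy of the $H^1(\R^n)$ inner product, while you re-verify the weak-solution conditions for \eqref{E:transmiSL} and invoke the uniqueness in Lemma \ref{L:transmiSL}, which amounts to the same Riesz-representation argument.
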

\begin{proof}
Let $g\in \Hm$. Then $\Tr^\ast g\in H^{-1}(\R^n)$ and consequently $\Gcal\circ \Tr^\ast g\in H^1(\R^n)$. Since
$\Gcal:H^{-1}(\R^n)\to H^1(\R^n)$ is a Riesz isometry, we have 
\[\left\langle \Gcal\circ \Tr^\ast g,v\right\rangle_{H^1(\R^n)}=\left\langle \Tr^\ast g,v\right\rangle_{H^{-1}(\R^n),H^1(\R^n)}\]
for all $v\in H^1(\R^n)$, and the definition of $\Scal$ gives
\begin{equation}\label{E:compareSG}
\left\langle g, \TTr v\right\rangle_{\Hm,\Hp}=\left\langle \Scal g,v\right\rangle_{H^1(\R^n)}.
\end{equation}
Combining with \eqref{E:char}, the lemma follows.
\end{proof}

Let $G$ be the Bessel kernel of order two, that is, the fundamental solution for $\Delta-1$ on $\R^n$.  It is well known that $\Gcal w=G\ast w$ for any $w\in C_c^\infty(\mathbb{R}^n)$. For a nonnegative Radon measure $\nu$ on $\partial\Omega$ the convolution 
\[G\ast \nu(x)=\int_{\partial\Omega}G(x-y)\nu(\dy),\quad x\in \R^n,\]
is a lower semicontinuous function taking values in $[0,+\infty]$. For finite $\nu$ it is
finite at all points $x\in \R^n\backslash \partial\Omega$. For a finite signed Radon measure $\nu$ on $\partial\Omega$ the convolution 
\[G\ast\nu(x)=G\ast \nu^+(x)-G\ast \nu^-(x), \quad x\in \R^n\backslash\partial\Omega,\] 
is a Borel function $G\ast\nu$ on $\R^n\setminus\partial\Omega$.

We say that a nonnegative Radon measure $\nu$ on $\partial\Omega$ has \emph{finite energy} if there is a constant $c>0$ such that 
\begin{equation}\label{E:finiteenergy}
\int_{\partial\Omega}|v|\: {\rm d}\nu\leq c\:\|v\|_{H^1(\R^n)},\quad v\in H^1(\R^n)\cap C_c(\R^n). 
\end{equation}
Given a finite signed Radon measure $\nu$ on $\partial\Omega$, we say that it has finite energy if its total variation measure $\nu^++\nu^-$ has finite energy.

\begin{remark}
It is well known that, by the Riesz representation theorem for measures, the cone of nonnegative elements of $H^{-1}(\R^n)$ is in one-to-one correspondence with the cone of nonnegative Radon measures of finite energy on $\R^n$. This can be seen using  \cite[Chapter 6, Exercise 4]{RUDIN-1991}; a variant of the argument is provided in \cite[Proposition 9.2.1]{BOULEAU-HIRSCH-1991}.
\end{remark}

\begin{proposition}\label{P:repS}
Let $\Omega$ be two-sided $H^1$-admissible and let $\nu$ be a nonnegative Radon measure on $\partial\Omega$ of finite energy or a finite signed Radon measure on $\partial\Omega$ of finite energy. Then sets of zero capacity have zero $\nu$-measure, and $\nu$ defines an element of $\Hm$ by
\begin{equation}\label{E:nudef}
\left\langle \nu,f\right\rangle_{\Hm,\Hp}:=\int_{\partial\Omega} f{\rm d}\nu,\quad f\in \Hp.
\end{equation}
Moreover, $\Scal\nu(x)=G\ast \nu(x)$, $x\in\R^n\backslash\partial\Omega$.
\end{proposition}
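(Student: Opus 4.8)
The plan is to verify the three assertions in order, using finite energy to pass from measures to elements of $H^{-1}(\mathbb{R}^n)$, and then to identify $\mathscr{S}\nu$ via the convolution representation in Lemma~\ref{L:repSL}. First I would treat the nonnegative case $\nu\geq 0$ and then reduce the signed case to it by the Jordan decomposition $\nu=\nu^+-\nu^-$, since by hypothesis $\nu^++\nu^-$ has finite energy, hence so do $\nu^+$ and $\nu^-$ separately.

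\emph{Step 1: $\nu$ defines an element of $H^{-1}(\mathbb{R}^n)$ and sets of zero capacity are $\nu$-null.} For nonnegative $\nu$, the finite energy estimate \eqref{E:finiteenergy} says exactly that $v\mapsto \int_{\partial\Omega}v\,{\rm d}\nu$ is a bounded linear functional on the dense subspace $H^1(\mathbb{R}^n)\cap C_c(\mathbb{R}^n)$ of $H^1(\mathbb{R}^n)$, so it extends uniquely to an element of $H^{-1}(\mathbb{R}^n)$; by the remark preceding the proposition this element is nonnegative and corresponds to $\nu$ itself. The fact that sets of zero capacity have zero $\nu$-measure is the standard "smoothness" property of measures in $H^{-1}(\mathbb{R}^n)$ (equivalently, measures charging no polar set): one approximates a compact set $K$ of zero capacity by the quasi-continuous equilibrium-type potentials or simply uses that the $H^{-1}$-functional kills $C_c$ functions that can be made small in $H^1$-norm while staying $\geq 1$ on $K$; this is classical (cf.\ \cite{FOT94}). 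Once zero-capacity sets are $\nu$-null, the pointwise restriction $\tilde v|_{\partial\Omega}$ of a quasi-continuous representative is well-defined $\nu$-a.e., so for $f=\TTr v\in\Hp$ the integral $\int_{\partial\Omega} f\,{\rm d}\nu = \int_{\partial\Omega}\tilde v\,{\rm d}\nu$ is unambiguous and, by continuity and \eqref{E:finiteenergy}, agrees with the pairing $\langle\nu,\TTr v\rangle_{H^{-1}(\mathbb{R}^n),H^1(\mathbb{R}^n)}$. Factoring through the trace, this gives a well-defined element of $\Hm$ via \eqref{E:nudef}; implicitly one checks the value depends only on $f$, not on the chosen $v$, which holds because $H^1_0(\mathbb{R}^n\setminus\partial\Omega)=\ker\llbracket\TTr\rrbracket$ is $\nu$-null on $\partial\Omega$ (its members have quasi-continuous representatives vanishing q.e.\ on $\partial\Omega$).

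\emph{Step 2: the identity $\mathscr{S}\nu = G\ast\nu$ off $\partial\Omega$.} By Step~1, viewing $\nu\in\Hm$, its image under $\Tr^\ast$ is, by the compatibility in \eqref{E:compareSG}, precisely the functional $v\mapsto\int_{\partial\Omega}\tilde v\,{\rm d}\nu$ regarded as an element of $H^{-1}(\mathbb{R}^n)$ — that is, $\Tr^\ast\nu=\nu$ under the identification of measures of finite energy with elements of $H^{-1}(\mathbb{R}^n)$. Then Lemma~\ref{L:repSL} gives $\mathscr{S}\nu=\mathscr{G}\,\nu$. It remains to identify $\mathscr{G}\nu$ with the potential $G\ast\nu$. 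For $\nu\geq 0$ with finite energy one tests against $v\in C_c^\infty(\mathbb{R}^n)$: by Fubini and the fact that $\mathscr{G}$ is convolution with $G$ (on $L^2$, and $\mathscr{G}$ is self-adjoint),
\[
\langle \mathscr{G}\nu, v\rangle_{H^1(\mathbb{R}^n)} = \langle \nu, v\rangle = \int_{\partial\Omega} v\,{\rm d}\nu = \int_{\partial\Omega}\Big(\int_{\mathbb{R}^n} G(x-y)(\mathscr{G}^{-1}v)(x)\,{\rm d}x\Big)\nu({\rm d}y),
\]
but more directly: $G\ast\nu\in L^1_{\loc}$ and for $v\in C_c^\infty$ one has $\int (G\ast\nu)\,(I-\Delta)v\,{\rm d}x=\int v\,{\rm d}\nu$ since $(I-\Delta)(G\ast\nu)=\nu$ distributionally ($G$ is the fundamental solution of $\Delta-1$), which identifies $\mathscr{G}\nu$ and $G\ast\nu$ as the same element of $H^1(\mathbb{R}^n)$, hence the same function a.e.; a standard argument using that both are $1$-harmonic on $\mathbb{R}^n\setminus\partial\Omega$ and elliptic regularity (Remark~\ref{R:ellipticreg}) upgrades this to the genuine pointwise equality at every $x\in\mathbb{R}^n\setminus\partial\Omega$, where $G\ast\nu$ is finite and continuous. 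The signed case follows by linearity.

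\emph{Main obstacle.} The routine parts are the linear-algebra identification $\Tr^\ast\nu=\nu$ and the distributional computation $(I-\Delta)(G\ast\nu)=\nu$. The delicate point is the measure-theoretic bookkeeping in Step~1: showing that finite-energy measures charge no set of zero capacity and, consequently, that $\int_{\partial\Omega}f\,{\rm d}\nu$ is well-defined for $f\in\Hp$ independently of the representative $v$ and of the quasi-continuous version — i.e.\ matching the abstract trace space $\Hp$ with genuine $\nu$-a.e.\ pointwise restrictions. This requires care because $\Hp$ is defined purely variationally (no surface measure), so one must invoke the quasi-continuity theory for $H^1(\mathbb{R}^n)$ and the fact that $\ker\TTr=H^1_0(\Omega)$ (Lemma~\ref{Trisom}(i)) and $\ker\llbracket\TTr\rrbracket=H^1(\mathbb{R}^n)$ (Lemma~\ref{L:spaces}(i)) to see the pairing descends to $\Hp$ consistently. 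I expect this to be where most of the actual work in the authors' proof lies.
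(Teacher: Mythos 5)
Your proposal is correct and follows essentially the same route as the paper's proof: reduce to nonnegative measures, use the finite-energy estimate and the classical theory (\cite[Lemma 2.2.3, Theorem 2.2.2]{FOT94}) to see that zero-capacity sets are $\nu$-null and that the pairing \eqref{E:nudef} is bounded on $\Hp$, and then conclude $\mathscr{S}\nu=G\ast\nu$ off $\partial\Omega$ via the resolvent representation \eqref{E:compareSG}. The only (harmless) difference is that where the paper cites \cite[Theorem 2.2.2]{FOT94} for the identity $\int_{\partial\Omega}\tilde v\,{\rm d}\nu=\left\langle G\ast\nu,v\right\rangle_{H^1(\mathbb{R}^n)}$, you re-derive it from $(I-\Delta)(G\ast\nu)=\nu$ in the distributional sense together with uniqueness of tempered solutions and elliptic regularity, which is a self-contained but equivalent way to make the same identification.
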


\begin{proof} It suffices to prove the result for a nonnegative Radon measure $\nu$ of finite energy. The first claim is shown in \cite[Lemma 2.2.3]{FOT94}. 
Estimate \eqref{E:finiteenergy} extends to all $v\in H^1(\R^n)$ and gives $|\int_{\partial\Omega}\TTr v \:{\rm d}\nu|\leq c\|v\|_{H^1(\R^n)}$; here $\TTr$ is as in Lemma \ref{L:spaces} (i). Optimizing over $v$ gives
\[\Big|\int_{\partial\Omega} f\:d\nu\Big|\leq c\:\|f\|_{\Hp}, \quad f\in \Hp,\]
with a (different) constant $c>0$. Consequently, $\nu \in \Hm$. One can follow \cite[Theorem 2.2.2]{FOT94} to see that
\begin{equation}\label{E:measureident}
\left\langle \nu,\TTr v\right\rangle_{\Hm,\Hp}=\int_{\partial\Omega}\TTr v\: {\rm d}\nu=\left\langle G\ast \nu,v\right\rangle_{H^1(\R^n)},\quad v\in H^1(\R^n).
\end{equation}
Together with \eqref{E:compareSG} and Remark \ref{R:ellipticreg} this gives the last claim in Proposition \ref{P:repS}.
\end{proof}

\begin{remark}{\color{black}
Let $\partial\Omega$ be compact and $\nu$ a signed Radon measure with $\supp\nu=\partial\Omega$; then $\nu$ is obviously finite.
For $n=1,2$ \cite[formula (1.2.11)]{ADAMS-1996} shows that $G\ast \nu$ is bounded and continuous on all of $\R^n$ and, as a consequence, the energy of $\nu$ is finite.
If there are constants $n-2<d<n$ and $c>0$ such that $\nu^\pm(B(x,r))\leq c\:r^d$ for all $x\in \partial\Omega$, $0<r<1$, then these facts are also true for $n\geq 3$.
An easy proof follows by \cite[formula (1.2.12)]{ADAMS-1996} and \cite[p. 109]{MATTILA-1995}, combined with arguments similar to \cite[Lemma 1 in Section 3.4.5]{TRIEBEL-1992}.}
\end{remark}

\begin{corollary}\label{cor-L2mu}
Let $\Omega$ be two-sided $H^1$-admissible and let $\mu$ be a nonnegative Radon measure on $\partial\Omega$ with the property that all sets of zero capacity are $\mu$-null sets. If there is some $c>0$ such that 
\begin{equation}\label{E:traceineq}
\|\TTr v\|_{L^2(\partial\Omega,\mu)}\leq c\:
\|v\|_{H^1(\mathbb{R}^n)},\quad v\in H^1(\mathbb{R}^n),
\end{equation}
then for any $g\in L^1(\partial\Omega,\mu)\cap L^2(\partial\Omega,\mu)$, the finite signed Radon measure $(g\cdot\mu)(dy):=g(y)\mu(dy)$ is of finite energy and
\begin{equation}\label{E:classSL}
\Scal (g\cdot\mu)(x)=\int_{\partial\Omega}G(x-y)g(y)\mu(\dy),\quad x\in \R^n\backslash\partial\Omega.
\end{equation}
\end{corollary}

\begin{proof}
Given $g\in L^2(\partial\Omega,\mu)$, we have
\[\int_{\partial\Omega}|\TTr v||g|d\mu\leq \|g\|_{L^2(\partial\Omega,\mu)}\|\TTr v\|_{L^2(\partial\Omega,\mu)}\leq c\:\|g\|_{L^2(\partial\Omega,\mu)}\|v\|_{H^1(\mathbb{R}^n)}\]
for all $v\in H^1(\mathbb{R}^n)$. This shows that the total variation measure $|g(y)|\mu(dy)$ of $g(y)\mu(dy)$ is of finite energy. Formula (\ref{E:classSL}) follows using the preceding proposition.
\end{proof}

\begin{remark}\mbox{}\label{R:strictcaseSL}
If the two-sided $H^1$-admissible domain $\Omega$ is a bounded Lipschitz domain and $\mu=\sigma$ is the surface measure on $\partial\Omega$, then for any $g\in L^2(\partial\Omega,\sigma)$, the right-hand side of \eqref{E:classSL} is a weak solution of \eqref{E:transmiSL}, and the equality \eqref{E:classSL} can alternatively be derived from the uniqueness in Lemma \ref{L:transmiSL}.
\end{remark}

The Riesz potential operator $u\mapsto \Ical u=(|\xi|^{-2}\hat{u})^\vee$ of order $2$ can be considered on the space of tempered distributions modulo polynomials. Viewed in this way, $\Ical=(-\Delta)^{-1}$. It is well known that $\Ical$ is an isometric isomorphism from $\dot{H}^{-1}(\R^n)$ onto $\dot{H}^1(\R^n)$. See  \cite[Section 6.2.1]{GRAFAKOS-2009}; further related details can be found in \cite[Sections 25.1 and 25.2]{SKM93}.

Suppose that $\Omega$ is both two-sided $H^1$- and $\dot{H}^1$-admissible and recall the operators $\overline{\Tr}$ and $\overline{\Scal}$ as in
\eqref{E:readjustetrace} and \eqref{E:readjustsingle}, respectively. Similarly as before, we may view the dual $\overline{\Tr}^\ast$ of $\overline{\Tr}$
as a bounded linear operator $\overline{\Tr}^\ast:\Hmo\to \dot{H}^{-1}(\R^n)$. 

\begin{lemma}
Let $\Omega$ be both a two-sided $H^1$- and $\dot{H}^1$-admissible domain. Then $\overline{\Scal}=\Ical\circ \overline{\Tr}^\ast$.
\end{lemma}

\begin{proof}
Given $g\in \Hmo$, we have $\Ical\circ \overline{\Tr}^\ast g\in \dot{H}^1(\R^n)$, and for all $v\in \dot{H}^1(\R^n)$ then
\begin{multline}\label{E:compareSGo}
\big\langle \Ical\circ \overline{\Tr}^\ast g,v\big\rangle_{\dot{H}^1(\R^n)}
=\big\langle \overline{\Tr}^\ast g,v\big\rangle_{\dot{H}^{-1}(\R^n),\dot{H}^1(\R^n)}
=\big\langle g,\dTTr v\big\rangle_{\Hmo,\Hpo}\\
=\big\langle \dot{\Scal} g,v\big\rangle_{\dot{H}^1(\R^n\backslash\partial\Omega)}= \big\langle \overline{\Scal} g,v\big\rangle_{\dot{H}^1(\R^n)}.
\end{multline}
\end{proof}

Now assume that $n\geq 2$. Let $K$ be the Green's function, that is, the fundamental solution for $\Delta$ on $\R^n$. We have 
$\Ical w=K\ast w$ for any $w\in C_c^\infty(\mathbb{R}^n)$. Since for a two-sided $\dot{H}^1$-admissible domain $\Omega$ the boundary $\partial\Omega$ is compact, any nonnegative Radon measure $\nu$
on $\partial\Omega$ is compactly supported and finite. For such a measure $\nu$ the convolution
\[K\ast \nu(x)=\int_{\partial\Omega}K(x-y)\nu(\dy),\quad x\in\R^n,\]
is a lower semicontinuous function. In the case $n\geq 3$, it takes values in $[0,+\infty]$, in the case $n=2$ in $(-\infty,+\infty]$. In either case it is finite on $\R^n\backslash\partial\Omega$. Given a signed Radon measure $\nu$ on $\partial\Omega$ (then automatically finite), the convolution $K\ast \nu=K\ast \nu^+-K\ast \nu^-$ is a Borel function on $\R^n\backslash\partial\Omega$. We call a signed measure $\nu$ \emph{centered} if $\nu(\R^n)=0$.

\begin{proposition}
Let $\Omega\subset \mathbb{R}^n$, $n\geq 2$, be both a two-sided $H^1$- and $\dot{H}^1$-admissible domain. Let $\nu$ be a centered finite signed Radon measure on $\partial\Omega$ of finite energy. Then sets of zero capacity have zero $\nu$-measure, and $\nu$ defines an element of $\Hmo$ by 
\begin{equation}\label{E:nudefo}
\left\langle \nu, f\right\rangle_{\Hmo,\Hpo}:=\int_{\partial\Omega}f\:{\rm d}\nu,\quad f\in \Hpo,
\end{equation}
with the integral defined using an arbitrary representative of $f$ modulo constants. Moreover, $K\ast \nu$ is a representative modulo constants of $\overline{\Scal}\nu$.
\end{proposition}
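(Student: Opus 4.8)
The plan is to mimic the proof of Proposition~\ref{P:repS}, now in the homogeneous framework. As there, it suffices to treat a nonnegative $\nu$, a general one being written as the difference $\nu^+-\nu^-$ of its Jordan components, both of finite energy; the centering condition reads $\nu^+(\R^n)=\nu^-(\R^n)$, and since $\supp\nu\subseteq\bord$ we also have $\nu^\pm(\R^n)=\nu^\pm(\bord)$. That sets of zero capacity are $\nu$-null is \cite[Lemma 2.2.3]{FOT94} applied to $\nu^\pm$; consequently quasi continuous representatives are defined $\nu$-a.e., so the right-hand side of \eqref{E:nudefo} is meaningful, and since $\nu(\R^n)=0$ it does not change when the representative is shifted by a constant, hence it descends to a well-defined linear functional on $\Hpo$.

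To prove this functional is bounded I would reduce the homogeneous norm to the inhomogeneous estimate already available. Fix an open ball $B$ with $\overline{\Omega}\subset B$; for $f\in\Hpo$ let $w\in\dot{H}^1(\R^n)$ be any class whose quasi continuous representative restricts to $f$ on $\bord$ modulo constants. Since $\nu(\R^n)=0$ and $\supp\nu\subseteq\bord\subset B$, replacing $w$ by $w-\bar w_B$, with $\bar w_B$ the average over $B$, changes neither $\nabla w$ nor $\int_{\bord}\tilde w\,{\rm d}\nu$, and Poincaré's inequality on $B$ gives $\|w-\bar w_B\|_{H^1(B)}\le C(B)\,\|\nabla w\|_{L^2(\R^n)}$. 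Extending $(w-\bar w_B)|_B$ to some $F\in H^1(\R^n)$ by a bounded extension operator for the ball $B$, so that $F=w-\bar w_B$ a.e., hence q.e., on the open set $B\supseteq\bord$, the finite energy bound \eqref{E:finiteenergy} --- extended to all of $H^1(\R^n)$ as in the proof of Proposition~\ref{P:repS} --- yields
\[\Big|\int_{\bord}\tilde w\,{\rm d}\nu\Big|=\Big|\int_{\bord}\tilde F\,{\rm d}\nu\Big|\le c\,\|F\|_{H^1(\R^n)}\le c'\,\|\nabla w\|_{L^2(\R^n)}.\]
Taking the infimum over admissible $w$ and using $w=\dot{E}_\Omega v$ for the $\dot{H}^1$-minimal extension $v\in\dot{H}^1(\Omega)$ of $f$ to compare with $\|f\|_{\Hpo}$ gives $|\langle\nu,f\rangle_{\Hmo,\Hpo}|\le c''\,\|f\|_{\Hpo}$, so $\nu\in\Hmo$.

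For the representation, the Lemma preceding the proposition gives $\overline{\mathscr{S}}=\mathscr{I}\circ\overline{\Tr}^\ast$, and \eqref{E:compareSGo} then shows $\langle\overline{\mathscr{S}}\nu,v\rangle_{\dot{H}^1(\R^n)}=\langle\nu,\dTTr v\rangle_{\Hmo,\Hpo}=\int_{\bord}\tilde v\,{\rm d}\nu$ for every $v\in\dot{H}^1(\R^n)$. Since an element of $\dot{H}^1(\R^n)$ is determined modulo constants by these pairings, it remains to check that $K\ast\nu$ represents an element of $\dot{H}^1(\R^n)$ with $\int_{\R^n}\nabla(K\ast\nu)\cdot\nabla v\,\dx=\int_{\bord}\tilde v\,{\rm d}\nu$ for all such $v$. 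This is the homogeneous analogue of \cite[Theorem 2.2.2]{FOT94}: for $n\ge 3$ the transient computation (approximate $v$ by test functions, use Fubini and $\Delta K=-\delta_0$) goes through verbatim with $K$ in place of $G$; for $n=2$ one works in the extended Dirichlet space, cf. \cite[Theorems 2.2.12 and 2.2.13]{CHEN-FUKUSHIMA-2012}, where the hypothesis $\nu(\R^2)=0$ is precisely what makes the logarithmic potential $K\ast\nu$ decay at infinity and have finite Dirichlet energy, so that the same integration-by-parts identity applies. Comparing with the displayed formula yields $K\ast\nu=\overline{\mathscr{S}}\nu$ modulo constants.

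I expect the last step to be the main obstacle: establishing $K\ast\nu\in\dot{H}^1(\R^n)$ together with the Dirichlet-form identity, in particular in the recurrent planar case, where one must control the logarithmic potential of a centered finite-energy measure and keep careful track of quasi continuous representatives; the remaining parts are routine transcriptions of the inhomogeneous arguments in the proof of Proposition~\ref{P:repS}.
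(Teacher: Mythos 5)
Your argument is correct and follows essentially the same route as the paper's proof: zero capacity sets are handled via the inhomogeneous case, centering gives well-definedness on $\Hpo$, boundedness is obtained by localizing to a ball containing $\partial\Omega$ and invoking Poincar\'e's inequality together with the finite energy estimate extended to $H^1(\mathbb{R}^n)$, and the representation follows from \eqref{E:compareSGo} combined with the homogeneous analogue of \cite[Theorem 2.2.2]{FOT94} for $K\ast\nu$. The only cosmetic difference is that you subtract the ball average and use an extension operator for the ball where the paper multiplies by a smooth cutoff $\chi\equiv 1$ on $\overline{\Omega}$ and normalizes the representative by $\int_B\chi w\,\dx=0$; your explicit remarks on the recurrent planar case, where centering is what makes the logarithmic potential have finite energy, are a welcome elaboration of the paper's terser \enquote{similarly as before}.
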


\begin{remark}
The assumption that $\nu$ is centered ensures that $\nu$ is an element of $\Hmo$. In the case $n=2$ it is also responsible for the correct decay behaviour of $K\ast\nu$ and its gradient at infinity, see \cite[proof of Lemma 2.5]{AMMARI-2004} and \cite[p. 351, formula (14)]{VLADIMIROV-1971}. 
\end{remark}

\begin{proof}
By Proposition \ref{P:repS}, the measure $\nu$ charges no set of zero capacity, and since $\nu$ is centered, the right hand side of \eqref{E:nudefo} is linear in $f\in \Hpo$. Let $B$ be an open ball containing $\partial\Omega$, let $\chi\in C_c^\infty(B)$ be such that $0\leq\chi\leq 1$ and $\chi\equiv 1$ on $\overline{\Omega}$.
Given $f\in \Hpo$, let $v\in \dot{H}^1(\R^n)$ be such that $\dTTr v=f$ and let $w$ be a representative modulo constants of $v$ such that $\int_B \chi w\:{\rm d}x=0$. Poincar\'e's inequality for $B$ gives $\|\chi w\|_{H^1(\R^n)}\leq c\:\|\chi\|_{C^1(B)}\|v\|_{\dot{H}^1(\Omega)}$ with $c>0$ independent of $v$. Similarly as before, \eqref{E:finiteenergy} extends to all of $H^1(\mathbb{R}^n)$ (with the trace in the integral), and using this fact, we find that $| \int_{\partial\Omega} f\:d\nu|\leq c\:\|f\|_{\Hpo}$, $f\in \Hpo$, where $c>0$ is another constant. This shows that $\nu\in \Hmo$. For $n\geq 3$ we can apply \cite[Theorem 2.2.5]{FOT94} to find that 
\begin{equation}\label{E:desiredfordot}
\big\langle \nu, \dTTr v\big\rangle_{\Hmo,\Hpo}=\int_{\partial\Omega}\dTTr v\: {\rm d}\nu=\left\langle K\ast \nu,v\right\rangle_{\dot{H}^1(\R^n)},\quad v\in \dot{H}^1(\R^n),
\end{equation}
which, together with \eqref{E:compareSGo}, gives the last claim. For $n=2$ we can use the more general argument that for  
$K\ast \nu$, viewed as a distribution, we have 
\[\Big|\sum_{i=1}^n \frac{\partial}{\partial x_i}(K\ast \nu)\Big(\frac{\partial v}{\partial x_i}\Big)\Big|=|(\Delta (K\ast \nu))(v)|=|\nu(v)|=|\int_{\partial\Omega}v\: {\rm d}\nu|\leq c\|v\|_{\dot{H}^1(\mathbb{R}^n)}\] 
for all $v\in C_c^\infty(\mathbb{R}^n)$. Clearly $\sum_{i=1}^n \frac{\partial}{\partial x_i}(K\ast \nu)(\eta_i)=0$ for all divergence free vector fields $\eta=(\eta_1,...,\eta_n)\in C_c^\infty(\mathbb{R}^n,\mathbb{R}^n)$. Therefore the density of $C_c^\infty(\mathbb{R}^n)$ in $\dot{H}^1(\mathbb{R}^n)$ now implies that $\nabla(K\ast \nu)\in L^2(\mathbb{R}^n,\mathbb{R}^n)$, hence $K\ast \nu\in \dot{H}^1(\mathbb{R}^n)$. Also the validity of \eqref{E:desiredfordot} can now be seen using this density.

\end{proof}

\begin{corollary}\label{Cor:kernelSLPd}
Let $\Omega\subset \mathbb{R}^n$, $n\geq 2$, be both two-sided $H^1$- and $\dot{H}^1$-admissible and let $\mu$ be a nonnegative Radon measure on $\partial\Omega$ such that \eqref{E:traceineq} holds. Then $\mu$ is finite and of finite energy. For any element $g$ of 
\[L^2_0(\partial\Omega,\mu)=\bigg\{g\in L^2(\partial\Omega,\mu)\;\bigg|\; \int_{\partial\Omega} g\,{\rm d}\mu=0\bigg\},\] 
the centered signed Radon measure $(g\cdot\mu)(dy) =g(y)\mu(dy)$ is finite and of finite energy. The function 
\begin{equation}\label{E:classicSL}
x\mapsto \int_{\partial\Omega}K(x-y)g(y)\mu(\dy)
\end{equation}
is a representative modulo constants of the function $\overline{\Scal}(g\cdot \mu)$ on $\R^n\backslash\partial\Omega$.
\end{corollary}

\begin{remark}\label{R:strictcaseSLdot}
Suppose $\Omega$ is a bounded Lipschitz domain in $\mathbb{R}^n$, $n\geq 2$. If $\mu=\sigma$ is the surface measure and $g\in L^2_0(\partial\Omega,\sigma)$, then \eqref{E:classicSL} is the classical single layer potential~\cite{AMMARI-2004, VERCHOTA-1984} of $g$. {\color{black}For bounded $g$, it provides a representative of $\overline{\Scal}(g\cdot \sigma)$ which is continuous on all of $\mathbb{R}^n$.}
\end{remark}

\subsection{Representations of double layer potentials}\label{Subsec:Reso-DLP}

Given a two-sided $H^1$-admissible domain $\Omega\subset \R^n$, let $\Rcal$ denote the unique bounded linear operator  from $L^2(\R^n\backslash \partial\Omega)$ into $H^1_0(\R^n\backslash \partial\Omega)\oplus V_{1,\Dcal}(\R^n\backslash \partial\Omega)$ such that
\[\left\langle \Rcal u,v\right\rangle_{H^1(\R^n\backslash\partial\Omega)}=\left\langle u,v\right\rangle_{L^2(\R^n\backslash \partial\Omega)}\]
for all $u\in L^2(\R^n\backslash \partial\Omega)$ and  $v\in H^1_0(\R^n\backslash \partial\Omega)\oplus V_{1,\Dcal}(\R^n\backslash \partial\Omega)$.
This operator is symmetric on $L^2(\R^n\backslash \partial\Omega)$ and symmetric with respect to $\left\langle\cdot,\cdot\right\rangle_{H^1(\R^n\backslash\partial\Omega)}$. 
It extends to an isometric isomorphism from $(H^1_0(\R^n\backslash \partial\Omega)\oplus V_{1,\Dcal}(\R^n\backslash \partial\Omega))'$ onto $H^1_0(\R^n\backslash \partial\Omega)\oplus V_{1,\Dcal}(\R^n\backslash \partial\Omega)$. The adjoint $(\partial_{\nu})^\ast$ of $\partial_{\nu} $ as in Lemma \ref{L:spaces} (iii) maps from $\Hp$ into $(V_{1,\Dcal}(\R^n\backslash \partial\Omega))'$. 
In the spirit of Remark \ref{R:extendbyzero}, it can be viewed as a bounded linear operator from $\Hp$ into $(H^1(\R^n\backslash \partial\Omega))'$ if for each $f\in \Hp$, the functional $(\partial_{\nu})^\ast f$ is silently extended by zero to a bounded linear functional on all of $H^1(\R^n\backslash \partial\Omega)$. The operator $\Dcal$ can be represented in terms of those operators.

\begin{lemma}\label{L:repDL}
Let $\Omega$ be two-sided $H^1$-admissible. Then $\Dcal=-\Rcal\circ (\partial_{\nu})^\ast$.
\end{lemma}

\begin{proof}
Given $f\in \Hp$ and $\varphi\in H^1_0(\R^n\backslash \partial\Omega)\oplus V_{1,\Dcal}(\R^n\backslash \partial\Omega)$ Corollary \ref{C:orthoV} gives
\begin{multline}
-\left\langle \Dcal f,\varphi\right\rangle_{(H^1)',H^1}=-\left\langle \Rcal\circ \Dcal f,\varphi\right\rangle_{H^1}=-\left\langle \Dcal f,\Rcal\varphi\right\rangle_{H^1}\notag\\
=\big\langle (\partial_\nu)\circ \mathrm{P}_{1,\Dcal}\circ \Rcal \varphi,f\big\rangle_{\Hm,\Hp}=\big\langle (\partial_{\nu})^\ast f, \mathrm{P}_{1,\Dcal}\circ \Rcal \varphi\big\rangle_{(H^1)',H^1}\notag\\
=\big\langle (\partial_{\nu})^\ast f,\Rcal \varphi\big\rangle_{(H^1)',H^1}=\big\langle \Rcal \circ (\partial_{\nu})^\ast f,\varphi\big\rangle_{(H^1)',H^1},
\end{multline}
where we use $H^1$ to abbreviate $H^1(\nbord)$.
\end{proof}

\begin{proposition}\label{P:repDL}
Let $\Omega$ be a bounded Lipschitz domain. Then $\Hp$ equals $H^{1/2}(\partial\Omega)$, and for any $f\in H^{1/2}(\partial\Omega)$, we have
\begin{equation}\label{E:classDL}
\Dcal f(x)=-\int_{\partial\Omega}\frac{\partial}{\partial\nu_y} G(x-y)f(y)\:\sigma(\dy),\quad x\in\R^n\backslash\partial\Omega;
\end{equation}
here $\frac{\partial}{\partial\nu_y}$ denotes the classical interior normal derivative and $\sigma$ the surface measure on $\partial\Omega$.
\end{proposition}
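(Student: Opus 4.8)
The plan is to identify $\mathscr{D}$ with the classical double layer potential by a well-posedness argument. That $\Hp$ coincides with $H^{1/2}(\partial\Omega)$ up to equivalent norms is the standard description of the abstract trace space of a bounded Lipschitz domain and can be quoted from the references cited in the introduction, e.g. \cite{JONSSON-1984} or \cite{MCLEAN-2000}. For the representation formula I would denote by $w$ the function given by the right-hand side of \eqref{E:classDL} and show that $w$ is a weak solution of \eqref{E:transmiDL} in the sense of Subsection \ref{Subsec:DLP}; the uniqueness statement in Lemma \ref{L:transmiDL} then forces $w=u^f=\mathscr{D}f$. Since $\mathscr{D}$ is bounded on $\Hp=H^{1/2}(\partial\Omega)$ (Lemma \ref{L:transmiDL}) and the classical double layer potential is bounded from $H^{1/2}(\partial\Omega)$ into $H^1(\Omega)\oplus H^1(\mathbb{R}^n\setminus\overline{\Omega})$, and since $C^\infty(\partial\Omega)$ is dense in $H^{1/2}(\partial\Omega)$, it suffices to carry this out for a smooth density $f$. (Alternatively, one could combine Lemma \ref{L:repDL}, $\mathscr{D}=-\mathscr{R}\circ(\partial/\partial\nu)^\ast$, with the identification of $(\partial/\partial\nu)^\ast f$ with the dipole distribution $\varphi\mapsto\int_{\partial\Omega}f\,\frac{\partial\varphi}{\partial\nu}\,\sigma(dy)$ and the convolution formula for $\mathscr{R}$, but the uniqueness route is cleaner to describe.)

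For smooth $f$ I would assemble the classical facts about the double layer potential on Lipschitz domains, here applied to the operator $1-\Delta$ with fundamental solution $G$. First, $w\in H^1(\Omega)\oplus H^1(\mathbb{R}^n\setminus\overline{\Omega})$, and because $G$ and $\nabla G$ decay exponentially at infinity while $\partial\Omega$ is compact, $w$ and $\nabla w$ are square integrable near infinity, so in fact $w\in H^1(\mathbb{R}^n\setminus\partial\Omega)$. Second, differentiating under the integral sign and using $(1-\Delta_x)G(x-y)=0$ for $x\neq y$ gives $-\Delta w+w=0$ on $\mathbb{R}^n\setminus\partial\Omega$. Third, $w$ obeys the classical jump relations: the interior and exterior nontangential traces of $w$ exist in $L^2(\partial\Omega)$ and their difference equals $-f$ (with the orientation of $\nu_y$ and the sign convention of \eqref{E:transmiDL} as fixed in the statement), while the interior and exterior conormal derivatives of $w$ exist in $L^2(\partial\Omega)$ and agree, i.e. the double layer has no jump in normal derivative across $\partial\Omega$. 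These are the standard mapping and boundary trace properties of layer potentials on Lipschitz domains, \cite{VERCHOTA-1984, COSTABEL-1988}; the passage from the Newton kernel to the Bessel kernel $G$ is harmless since their difference is smooth.

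It then remains to reconcile the two notions of normal derivative and to verify the defining conditions of a weak solution. Since $w|_\Omega$ lies in $H^1_\Delta(\Omega)$ with $\Delta w=w$ on $\Omega$, Green's identity \eqref{EqGreenInt} identifies the weak interior normal derivative $\partial_i w/\partial\nu\in\Hm$ with the element of $\Hm$ represented, against the surface measure $\sigma$, by the classical interior conormal derivative of $w$; identity \eqref{E:extnormalder} does the same for $\partial_e w/\partial\nu$. Hence $\llbracket\partial w/\partial\nu\rrbracket=0$ in $\Hm$, while the trace jump relation reads $\llbracket\TTr w\rrbracket=-f$ in $\Hp$. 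With these identifications the verification is the Green-identity bookkeeping already used for Lemma \ref{L:spaces} and Corollary \ref{C:orthoV}: for $v\in C_c^\infty(\mathbb{R}^n\setminus\partial\Omega)$ one gets $\langle w,v\rangle_{H^1(\mathbb{R}^n\setminus\partial\Omega)}=\int_{\mathbb{R}^n\setminus\partial\Omega}(-\Delta w+w)v\,\dx=0$; for $v\in V_{1,\mathscr{S}}(\mathbb{R}^n\setminus\partial\Omega)\subset H^1(\mathbb{R}^n)$, applying \eqref{EqGreenInt} and \eqref{E:extnormalder} on the two sides of $\partial\Omega$ and using $\TTr_i v=\TTr_e v$ gives $\langle w,v\rangle_{H^1(\mathbb{R}^n\setminus\partial\Omega)}=\langle\llbracket\partial w/\partial\nu\rrbracket,\TTr v\rangle_{\Hm,\Hp}=0$; and for $v\in V_{1,\mathscr{D}}(\mathbb{R}^n\setminus\partial\Omega)$ the same computation with the roles of $w$ and $v$ exchanged gives $\langle v,w\rangle_{H^1(\mathbb{R}^n\setminus\partial\Omega)}=\langle\partial v/\partial\nu,\llbracket\TTr w\rrbracket\rangle_{\Hm,\Hp}$, which is the boundary identity required of a weak solution with data $f$. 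Thus $w$ is a weak solution of \eqref{E:transmiDL}, and the uniqueness part of Lemma \ref{L:transmiDL} yields $w=\mathscr{D}f$.

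The step I expect to be the main obstacle is the second paragraph: one must import, with precise statements and matched sign conventions (for $\nu$, for the fundamental solution $G$, and for the orientation in \eqref{E:transmiDL}), the Lipschitz-domain facts that the classical double layer potential of an $H^{1/2}$-density is $H^1$ on each side of $\partial\Omega$, that its one-sided conormal derivatives exist in $L^2(\partial\Omega)\hookrightarrow\Hm$, and that the normal-derivative jump vanishes. Once these are in place, the behaviour at infinity, the identification of the weak and classical conormal derivatives through \eqref{EqGreenInt} and \eqref{E:extnormalder}, and the verification of the weak formulation are routine.
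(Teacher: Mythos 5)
Your proposal is correct and follows exactly the paper's route: the paper's proof is precisely the one-line argument that the classical double layer potential is known to solve \eqref{E:transmiDL} in the weak sense, so the uniqueness part of Lemma \ref{L:transmiDL} gives \eqref{E:classDL}. Your write-up simply fills in the classical Lipschitz-domain facts (jump relations, $H^1$ mapping properties, identification of weak and classical conormal derivatives via \eqref{EqGreenInt} and \eqref{E:extnormalder}) that the paper leaves as ``known''.
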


\begin{proof}
The classical double layer potential on the right hand side of \eqref{E:classDL} is known to solve \eqref{E:transmiDL} in the weak sense, so the uniqueness part of Lemma \ref{L:transmiDL} implies \eqref{E:classDL}.
\end{proof}

Let $\Omega\subset \R^n$ be a two-sided $\dot{H}^1$-admissible domain. Let $\dot{\Rcal}$ denote the unique bounded linear operator mapping $(\dot{H}^1_0(\R^n\backslash \partial\Omega)\oplus \dot{V}_{0,\Dcal}(\R^n\backslash \partial\Omega))'$ into $\dot{H}^1_0(\R^n\backslash \partial\Omega)\oplus \dot{V}_{0,\Dcal}(\R^n\backslash \partial\Omega)$ such that
\[\big\langle \dot{\Rcal}u,v\big\rangle_{\dot{H}^1(\R^n\backslash\partial\Omega)}=\left\langle u,v\right\rangle_{(\dot{H}^1(\R^n\backslash\partial\Omega))',\dot{H}^1(\R^n\backslash\partial\Omega)}\]
for all $u\in (\dot{H}^1_0(\R^n\backslash \partial\Omega)\oplus \dot{V}_{0,\Dcal}(\R^n\backslash \partial\Omega))'$ and $v\in \dot{H}^1_0(\R^n\backslash \partial\Omega)\oplus \dot{V}_{0,\Dcal}(\R^n\backslash \partial\Omega)$. With a similar interpretation as before, we obtain the following.

\begin{lemma}\label{L:repDLo}
Let $\Omega$ be two-sided $\dot{H}^1$-admissible. Then $\dot{\Dcal}=-\dot{\Rcal}\circ (\dot\partial_\nu)^\ast$.
\end{lemma}

\begin{proposition}\label{P:repDLo}
Let $\Omega\subset \mathbb{R}^n$, $n\geq 2$, be a bounded Lipschitz domain. Then for any $f\in \dot{H}^{1/2}(\partial\Omega)$, the trace jump readjusted representative of $\dot{\Dcal}f$ differs only by an additive constant from $-\int_{\partial\Omega}\frac{\partial}{\partial\nu_y} K(\cdot -y)f(y)\:\mu(\dy)$ on $\R^n\backslash\partial\Omega$. 
\end{proposition}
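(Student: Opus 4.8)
The plan is to follow the proof of Proposition~\ref{P:repDL}, now with the Newton kernel $K$ in place of the Bessel kernel and with extra bookkeeping for the homogeneous spaces. First I would record the identity $\Hpo=\dot H^{1/2}(\partial\Omega)$ with equivalent norms: Proposition~\ref{P:repDL} gives $\Hp=H^{1/2}(\partial\Omega)$, Remark~\ref{R:trace}(ii) identifies $\mathcal B(\partial\Omega)$ with $\dot{\mathcal B}(\partial\Omega)\oplus\mathbb{R}$, and classically $H^{1/2}(\partial\Omega)=\dot H^{1/2}(\partial\Omega)\oplus\mathbb{R}$; since both $\Omega$ and $\mathbb{R}^n\setminus\overline{\Omega}$ are $\dot H^1$-extension domains, a Poincar\'e inequality on a large ball then shows that the norm $\|\cdot\|_{\Hpo}$ used here is equivalent to the homogeneous trace norm on $\dot H^{1/2}(\partial\Omega)$.

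Next, fix $f\in\dot H^{1/2}(\partial\Omega)$ and choose an $H^{1/2}(\partial\Omega)$-representative $f_0$. Let
\[
u(x):=-\int_{\partial\Omega}\frac{\partial}{\partial\nu_y}K(x-y)\,f_0(y)\,\mu(dy),\qquad x\in\mathbb{R}^n\setminus\partial\Omega,
\]
be the classical harmonic double layer potential. By classical Lipschitz theory $u|_{\Omega}\in H^1(\Omega)\subset\dot H^1(\Omega)$; and on $\mathbb{R}^n\setminus\overline{\Omega}$ the kernel bounds $|\nabla_y K(x-y)|=O(|x|^{1-n})$ and $|\nabla_x\nabla_y K(x-y)|=O(|x|^{-n})$ for $y\in\partial\Omega$ and $|x|\to\infty$, together with $f_0\in L^1(\partial\Omega,\mu)$, give $u(x)=O(|x|^{1-n})$ and $\nabla u(x)=O(|x|^{-n})$, so $\nabla u\in L^2(\mathbb{R}^n\setminus\overline{\Omega})$ and hence $u\in\dot H^1(\mathbb{R}^n\setminus\partial\Omega)$; this includes $n=2$, where $K$ is logarithmic but the double layer potential still decays. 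Replacing $f_0$ by $f_0+c$ changes $u$ only by a locally constant function --- a constant on $\Omega$ and, by the Gauss identity $\int_{\partial\Omega}\partial_{\nu_y}K(x-y)\,\mu(dy)=0$ for $x\notin\overline{\Omega}$, by $0$ on $\mathbb{R}^n\setminus\overline{\Omega}$ --- so the class of $u$ in $\dot H^1(\mathbb{R}^n\setminus\partial\Omega)$ depends only on $f$.

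Then I would verify that $u$ is a weak solution in the $\dot H^1$-sense of \eqref{E:transmiDLo} with datum $f$. Being harmonic on $\mathbb{R}^n\setminus\partial\Omega$, $u$ lies in $\dot V_0(\mathbb{R}^n\setminus\partial\Omega)$; the classical jump relations for the Lipschitz harmonic double layer potential \cite{VERCHOTA-1984} give $\llbracket\dTTr u\rrbracket=-f_0$ as a genuine equality in $\Hpo$ (the sign coming from the minus sign in front of the integral) and $\llbracket\frac{\dot{\partial}u}{\partial\nu}\rrbracket=0$, so $u\in\dot V_{0,\dot{\mathscr{D}}}(\mathbb{R}^n\setminus\partial\Omega)$; inserting these into Corollary~\ref{C:orthoVo}, and using that the abstract $\dot H^1$-normal derivative agrees with the classical one on Lipschitz domains, then yields exactly the two variational identities in the definition of a weak solution of \eqref{E:transmiDLo}. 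By uniqueness of such weak solutions (the lemma preceding Corollary~\ref{C:DLbijectiveo}), the $\dot H^1(\mathbb{R}^n\setminus\partial\Omega)$-class of $u$ equals $\dot{\mathscr{D}}f$. Since $u$ already satisfies $\llbracket\dTTr u\rrbracket=-f_0$ as a genuine equality, it may be used directly as the representative $\overline{w}$ in the construction of the trace jump readjusted solution of \eqref{E:transmiDLo} (the remark following Corollary~\ref{C:DLbijectiveo}), with the constant $c$ there equal to $0$; hence the trace jump readjusted version of $\dot{\mathscr{D}}f$ is the class of $u$ modulo a single constant, which is the assertion.

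The main obstacle is the third step: transporting the classical Lipschitz jump relations into the present homogeneous, measure-free framework. This entails matching sign conventions between \eqref{E:classDL}, the formulation of \eqref{E:transmiDLo}, and Corollary~\ref{C:orthoVo}; justifying the Green-identity manipulations at infinity on the unbounded exterior domain, which is most delicate when $n=2$ because $K$ grows logarithmically; and invoking the classical identification $\Hmo=\dot H^{-1/2}(\partial\Omega)$ together with the coincidence of abstract and classical normal derivatives. These ingredients are all standard, but assembling them cleanly --- rather than the uniqueness step --- is where the work lies.
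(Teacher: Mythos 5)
Your proposal is essentially the paper's own argument: the paper states this proposition without proof, the intended reasoning being exactly the analogue of the one-line proof of Proposition~\ref{P:repDL} --- the classical (Newton-kernel) double layer potential is known to solve \eqref{E:transmiDLo} in the weak sense, so the uniqueness of weak solutions identifies it with $\dot{\mathscr{D}}f$ --- and you carry this out together with the homogeneous-space bookkeeping (the identification $\Hpo=\dot{H}^{1/2}(\partial\Omega)$, decay at infinity including $n=2$, dependence on the representative of $f$, and the trace jump readjustment). No gap; your treatment of the constants and of the readjusted representative is if anything more explicit than what the paper records.
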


\begin{remark}
Proposition~\ref{P:repDLo} states that, in the case of a bounded Lipschitz domain $\Omega\subset \mathbb{R}^n$, $n\geq 2$, endowed with the surface measure on the boundary, the trace jump readjusted variant of the double layer potential operator $\dot\Dcal$ can be represented using the classical formula~\cite{AMMARI-2004, VERCHOTA-1984}, up to an additive constant.
\end{remark}

\section{Neumann-Poincar\'e operators for admissible domains}\label{Sec:Neumann-Poincaré}

For smooth or Lipschitz domains Neumann-Poincar\'e operators are well understood \cite{KHAVINSON-2006, ANDO-2021,KANG-2022}, and their spectral properties are known to reflect the regularity of $\partial\Omega$: They are compact in the $C^1$-case \cite{FABES-1978} but have a continuous non-real spectrum if $\bord$ has a corner~\cite{ANDO-2021, CARLEMAN-1916}. Refined mapping properties of Neumann-Poincar\'e operators for bounded Lipschitz domains were studied in \cite{mitrea_generalization_2008}.

We define Neumann-Poincar\'e operators $\Kcal$ and $\dot{\Kcal}$ on the trace spaces $\Hp$ and $\Hpo$ respectively. As in the Lipschitz case \cite{STEINBACH-2001}, they are bounded operators and satisfy the well-known jump relations, see Theorems~\ref{T:K} and~\ref{T:Kdot}. Following~\cite{STEINBACH-2001}, we define boundary layer potential operators associated with $(\Delta-1)$ respectively $\Delta$ in \eqref{E:VW} respectively \eqref{E:VWo} below. Using those operators, we obtain generalizations of the classical Calderón projectors, see Theorems~\ref{NP-Sym} and~\ref{ThCalderonProjo}.

As in~\cite{STEINBACH-2001}, the boundary layer potentials give rise to equivalent Hilbert space norms on the trace spaces and their duals. Those norms make the single and double layer potentials isometries and the operators $(\pm\frac{1}{2}I+\Kcal)$ coercive contractions, see Lemma~\ref{L:coercive} and Theorem~\ref{T:makeiso}.

\subsection{Neumann-Poincar\'e operators}\label{Subsec:N-P}

Suppose that $\Omega\subset \R^n$ is a two-sided $H^1$-admissible domain. We refer to the bounded linear operator $\Kcal:\Hp\to \Hp$, defined by
\[\Kcal:=\frac12(\Tr_i+\Tr_e)\circ \Dcal,\]
as the \emph{Neumann-Poincar\'e operator} for \eqref{E:transmiDL}; here $\Tr_i:=\TTr_i|_{V_1(\Omega)}$ and $\Tr_e:=\TTr_e|_{V_1(\R^n\backslash\overline\Omega)}$. By $\Kcal^\ast:\Hm\to\Hm$, we denote its dual. 

In the sequel, we use the symbol $I$ to denote the identity operator; the space on which it acts will be clear from the context.

The following identities generalize results well-known in the Lipschitz case, see \cite[Theorem 2.4]{AMMARI-2004} or \cite{VERCHOTA-1984}.

\begin{theorem}\label{T:K}
Let $\Omega$ be two-sided $H^1$-admissible. Then
\begin{enumerate}
\item[(i)]  $\Tr_i\circ \Dcal=-\frac12 I+\Kcal$ and $\Tr_e\circ \Dcal=\frac12 I+\Kcal$.
\item[(ii)]  $\partial_{\nu,i}\circ \Scal=\frac12 I+\Kcal^\ast$ and $\partial_{\nu,e}\circ \Scal=-\frac12 I+\Kcal^\ast$. In particular, 
\begin{equation}\label{E:Kast}
\Kcal^\ast=\frac12(\partial_{\nu,i}+\partial_{\nu,e})\circ \Scal.
\end{equation}
\end{enumerate}
\end{theorem}

Recall the definitions \eqref{EqGreenInt} and \eqref{E:extnormalder} of the interior and exterior normal derivatives. They give the Gauss-Green formulas
\begin{equation}\label{E:GGi}
\left\langle \partial_{\nu,i} u, \TTr_i v\right\rangle_{\Hm,\Hp}=\int_\Omega uv\:\dx+\int_\Omega \nabla u\cdot\nabla v\:\dx
\end{equation}
for all $u\in V_1(\Omega)$ and $v\in H^1(\Omega)$, and 
\begin{equation}\label{E:GGe}
\left\langle \partial_{\nu,e} u, \TTr_e v\right\rangle_{\Hm,\Hp}=-\int_{\mathbb{R}^n\setminus\overline{\Omega}} uv\:\dx-\int_{\mathbb{R}^n\setminus\overline{\Omega}} \nabla u\cdot\nabla v\:\dx
\end{equation}
for all $u\in V_1(\mathbb{R}^n\setminus\overline{\Omega})$ and $v\in H^1(\mathbb{R}^n\setminus\overline{\Omega})$. We prove Theorem \ref{T:K}.

\begin{proof} Statement (i)  follows from the definitions of $\Dcal$. To see (ii), note that for any $f\in \Hp$ and $g\in \Hm$ we have 
\begin{align*}
0&=\left\langle \Dcal f,\Scal g\right\rangle_{H^1(\R^n\backslash \partial\Omega)}\\&=\langle\partial_{\nu,i}\Scal g,\Tr_i\:\Dcal f\rangle_{\Hm,\Hp}-  \langle\partial_{\nu,e}\Scal g,\Tr_e\:\Dcal f\rangle_{\Hm,\Hp}
\end{align*}
by Lemma \ref{L:spaces} (iv), \eqref{E:GGi} and \eqref{E:GGe}, and that adding this zero to 
\[\langle\partial_{\nu,i}\Scal g,\Tr_e\:\Dcal f\rangle_{\Hm,\Hp}-  \langle\partial_{\nu,e}\Scal g,\Tr_i\:\Dcal f\rangle_{\Hm,\Hp}\]
respectively subtracting it gives 
\begin{multline}
2\left\langle g, \Kcal f\right\rangle_{\Hm,\Hp}=\langle \llbracket\partial_\nu\Scal g\rrbracket,\Tr_i\Dcal f+\Tr_e\Dcal f\rangle_{\Hm,\Hp}\notag\\
=\langle \partial_{\nu,i}\Scal g+\partial_{\nu,e}\Scal g,-\left\llbracket\Tr\Dcal f\right\rrbracket\rangle_{\Hm,\Hp}=\langle (\partial_{\nu,i}+\partial_{\nu,e})\circ \Scal g,f\rangle_{\Hm,\Hp}.
\end{multline}
This yields \eqref{E:Kast}, and using the definition of $\Scal$, item (ii) follows.
\end{proof}

For a two-sided $\dot{H}^1$-admissible domain $\Omega\subset \R^n$, similar observations can be made. We refer to the bounded linear operator $\dot{\Kcal}:\Hpo\to \Hpo$, defined by
\[\dot{\Kcal}:=\frac12(\dTr_i+\dTr_e)\circ \dot{\Dcal},\]
as the \emph{Neumann-Poincar\'e operator} for \eqref{E:transmiDLo}. By $\dot{\Kcal}^\ast:\Hmo\to\Hmo$, we denote its dual.

\begin{theorem}\label{T:Kdot}
Let $\Omega$ be two-sided $\dot{H}^1$-admissible. Then
\begin{enumerate}
\item[(i)]  $\dTr_i\circ \dot{\Dcal}=-\frac12 I+\dot{\Kcal}$ and $\dTr_e\circ \dot{\Dcal}=\frac12 I+\dot{\Kcal}$.
\item[(ii)]  $ \dot{\partial}_{\nu,i}\circ \dot{\Scal}=\frac12 I+\dot{\Kcal}^\ast$ and $\dot\partial_{\nu,e}\circ \dot{\Scal}=-\frac12 I+\dot{\Kcal}^\ast$.
In particular, 
\[\dot{\Kcal}^\ast=\frac12(\dot{\partial}_{\nu,i}+\dot\partial_{\nu,e})\circ \dot{\Scal}=\frac12( \dot{\partial}_{\nu,i}+\dot\partial_{\nu,e})\circ \overline{\Scal},\]
where $\overline{\Scal}$ is as in Remark \ref{R:readjustsingle}.
\end{enumerate}
\end{theorem}

\begin{remark}\label{Rem:SameK*}
If $\Omega$ is a bounded Lipschitz domain in $\mathbb{R}^n$, {\color{black} $n\geq 2$}, and $\bord$ is endowed with the surface measure $\sigma$, then by Corollary ~\ref{Cor:kernelSLPd} and Remark~\ref{R:strictcaseSLdot} the operator $\overline{\Scal}$ is the usual single layer potential operator, up to an additive constant.
As pointed out in Remark \ref{R:normalderLip} for $\dot{\partial}_{\nu,i}$, also the $\dot{H}^{-1/2}(\partial\Omega)$-valued linear operators $\dot{\partial}_{\nu,i}$ and $\dot{\partial}_{\nu,e}$ are the usual weak normal derivatives.
As a consequence, the above definition of the operator $\dot\Kcal^*$ coincides with the usual one for bounded Lipschitz domains, see ~\cite[Theorem 2.4]{AMMARI-2004} {\color{black} (there denoted by $\mathcal{K}_D^\ast$)} or ~\cite{VERCHOTA-1984}  {\color{black} (there denoted by $K^\ast$)}. 
\end{remark}

\subsection{Boundary layer potentials and Calder\'on projectors}\label{Subsec:CaldProj}

Let $\Omega$ be two-sided $H^1$-admissible. By Lemma \ref{L:spaces}, Corollary \ref{C:DLbijective} and Corollary \ref{C:SLbijective} the \emph{boundary single layer potential operator} $\Vcal:\Hm\to \Hp$ and the \emph{\color{black}hypersingular operator} $\Wcal:\Hp\to \Hm$, defined by  
\begin{equation}\label{E:VW}
\Vcal:=\Tr\circ\, \Scal\quad \text{respectively}\quad \Wcal:=-\partial_\nu\circ \Dcal,
\end{equation}
are bounded linear bijections with bounded inverses.

\begin{remark}
In a classical context, the trace $\Vcal g$ on $\partial\Omega$ of the single layer potential $\Scal g$ can be defined as a pointwise limit, while the (co-)normal derivative $\Wcal f$ of the double layer potential $\Dcal f$ is a hypersingular integral, which, for smooth enough $\partial \Omega$ and $f$, can be expressed as a {\color{black}Hadamard finite part integral}. See for instance \cite[Section 2]{STEINBACH-2001} for details; note that our notation differs slightly from the one used there.
\end{remark}

Now let ${\mathcal{M}}:\Hp\times\Hm\to \Hp\times\Hm$ be the linear operator defined by 
\begin{equation*}
{\mathcal{M}}\;:=\;
\begin{pmatrix}
-\Kcal & \Vcal\\[1.5ex]
\Wcal & \Kcal^* 
\end{pmatrix}
\end{equation*}
and set
\begin{equation*}
{\mathcal{C}}_i:=\frac{1}{2}I+{\mathcal{M}}\qquad\mbox{and}\qquad{\mathcal{C}}_e:=\frac{1}{2}I-{\mathcal{M}}.
\end{equation*}
One refers to the linear operators ${\mathcal{C}}_i$ and ${\mathcal{C}}_e$ as the \emph{interior} respectively \emph{exterior Calder\'on projector} for $\Omega$. We obtain generalizations of known symmetrization formulas referred to as \emph{Calder\'on relations}, see \cite[Lemma 1.2.4]{HSIAO-2008}, \cite[Theorem 3.1.3]{NEDELEC-2001} or \cite[Proposition 5.1]{STEINBACH-2001}.

\begin{theorem}\label{NP-Sym}
Let $\Omega$ be two-sided $H^1$-admissible. The operators $\mathcal{C}_i$ and $\mathcal{C}_e$ are continuous projectors and satisfy ${\mathcal{C}}_i+{\mathcal{C}}_e=I$. Moreover, we have $\mathcal{M}^2=\frac{1}{4}I$, that is, 
\begin{equation}
\begin{cases}
\Kcal\Vcal=\Vcal\Kcal^*,\\[2ex]
\Wcal\Kcal=\Kcal^*\Wcal,
\end{cases}
\qquad\mbox{and}\qquad
\begin{cases}
\displaystyle\Kcal^2 +\Vcal\Wcal= \frac{1}{4}I,\\
\displaystyle(\Kcal^*)^2 +\Wcal\Vcal = \frac{1}{4}I.
\end{cases}
\end{equation}
\end{theorem}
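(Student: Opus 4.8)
The plan is to work throughout with the ambient space $H^1(\mathbb{R}^n\setminus\partial\Omega)$ and to exploit systematically the orthogonal decomposition $V_1(\mathbb{R}^n\setminus\partial\Omega)=V_{1,\mathscr{S}}\oplus V_{1,\mathscr{D}}$ from Lemma \ref{L:spaces} (iv) together with the "second Green" pairing \eqref{E:secondGreen}, \eqref{E:orthotoharmfunctions} and its exterior analog. Concretely, for $f\in\Hp$ and $g\in\Hm$ the double layer solution $\mathscr{D}f$ lies in $V_{1,\mathscr{D}}$ and the single layer solution $\mathscr{S}g$ lies in $V_{1,\mathscr{S}}$, so by orthogonality $\left\langle \mathscr{D}f,\mathscr{S}g\right\rangle_{H^1(\mathbb{R}^n\setminus\partial\Omega)}=0$. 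Expanding this inner product via the Green identities on each side of $\partial\Omega$ is exactly the computation already carried out in the proof of Theorem \ref{T:K}; the same bookkeeping, read off in the two complementary ways, will produce the four identities. So the real content is organizational: once $\mathcal{M}^2=\tfrac14 I$ is established, $\mathcal{C}_i^2=(\tfrac12 I+\mathcal{M})^2=\tfrac14 I+\mathcal{M}+\mathcal{M}^2=\tfrac12 I+\mathcal{M}=\mathcal{C}_i$, and likewise $\mathcal{C}_e^2=\mathcal{C}_e$, while $\mathcal{C}_i+\mathcal{C}_e=I$ is immediate from the definitions; continuity of both is clear since $\mathscr{K},\mathscr{K}^\ast,\mathscr{V},\mathscr{W}$ are all bounded. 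Thus the theorem reduces entirely to verifying the block identities packaged in $\mathcal{M}^2=\tfrac14 I$.

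For the two "intertwining" relations I would argue as follows. To prove $\mathscr{K}\mathscr{V}=\mathscr{V}\mathscr{K}^\ast$, take $g,h\in\Hm$ and test $\left\langle \mathscr{S}g,\mathscr{S}h\right\rangle_{H^1(\mathbb{R}^n\setminus\partial\Omega)}$: since $\mathscr{S}g,\mathscr{S}h\in V_{1,\mathscr{S}}$ the jump in trace of each vanishes, so applying the interior/exterior Green identities \eqref{EqGreenInt}, \eqref{E:extnormalder} and using $\TTr_i\mathscr{S}g=\TTr_e\mathscr{S}g=\mathscr{V}g$ on the one hand and Theorem \ref{T:K} (ii) to express $\tfrac{\partial_i}{\partial\nu}\mathscr{S}h=(\tfrac12 I+\mathscr{K}^\ast)h$, $\tfrac{\partial_e}{\partial\nu}\mathscr{S}h=(-\tfrac12 I+\mathscr{K}^\ast)h$ on the other, the symmetry of the inner product yields $\langle \mathscr{K}^\ast h,\mathscr{V}g\rangle = \langle \mathscr{K}^\ast g,\mathscr{V}h\rangle$ after the dust settles — equivalently $\langle h,\mathscr{V}\mathscr{K}^\ast g\rangle_{\Hm,\Hp}=\langle h,\mathscr{K}\mathscr{V}g\rangle_{\Hm,\Hp}$; here one uses that $\mathscr{V}=\Tr\circ\mathscr{S}$ is symmetric as a map $\Hm\to\Hp$ in the duality sense, which itself follows from \eqref{E:secondGreen} applied to $\mathscr{S}g,\mathscr{S}h\in V_{1,\mathscr{S}}$. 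Dually, $\mathscr{W}\mathscr{K}=\mathscr{K}^\ast\mathscr{W}$ comes from $\left\langle \mathscr{D}f_1,\mathscr{D}f_2\right\rangle_{H^1(\mathbb{R}^n\setminus\partial\Omega)}$ with $f_1,f_2\in\Hp$: now both factors lie in $V_{1,\mathscr{D}}$ so the normal-derivative jumps vanish, $\tfrac{\partial}{\partial\nu}\mathscr{D}f_j=-\mathscr{W}f_j$, while $\TTr_i\mathscr{D}f_j=(-\tfrac12 I+\mathscr{K})f_j$ and $\TTr_e\mathscr{D}f_j=(\tfrac12 I+\mathscr{K})f_j$ by Theorem \ref{T:K} (i); symmetry of the inner product again gives the claim, using the symmetry of $\mathscr{W}$ in the duality sense (from \eqref{E:orthotoforms}).

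For the two "Pythagorean" relations, the natural device is to compute the trace and the normal derivative of a full superposition solution $u=\mathscr{D}f+\mathscr{S}g$ and match against the inverses $\mathscr{D}^{-1}=-\llbracket\TTr\rrbracket$ and $\mathscr{S}^{-1}=\llbracket\tfrac{\partial}{\partial\nu}\rrbracket$ of Corollaries \ref{C:DLbijective} and \ref{C:SLbijective}. For instance, apply $\tfrac{\partial}{\partial\nu}\circ P_{1,\mathscr{D}}$ to $\mathscr{S}(\mathscr{W}f)\in V_{1,\mathscr{S}}$ versus $\mathscr{D}f\in V_{1,\mathscr{D}}$: since $V_{1,\mathscr{S}}$ is orthogonal to $V_{1,\mathscr{D}}$ one gets relations among $\mathscr{K}^2$, $\mathscr{V}\mathscr{W}$ applied to $f$; more cleanly, $\mathscr{V}\mathscr{W}=\Tr\circ\mathscr{S}\circ(-\tfrac{\partial}{\partial\nu})\circ\mathscr{D}$, and since $\mathscr{S}\circ(-\tfrac{\partial}{\partial\nu})$ restricted to $V_{1,\mathscr{D}}$ is the orthogonal projection $P_{1,\mathscr{S}}$ onto $V_{1,\mathscr{S}}$ (because $\mathscr{S}=\Tr_{\mathscr{S}}^{-1}\circ\text{(something)}$ and $\tfrac{\partial}{\partial\nu}$ is the inverse isometry on $V_{1,\mathscr{D}}$, with the jump operators acting as the respective inverses), one computes $\mathscr{V}\mathscr{W}f=\Tr(P_{1,\mathscr{S}}\mathscr{D}f)=\Tr(\mathscr{D}f-P_{1,\mathscr{D}}\mathscr{D}f)=\Tr(\mathscr{D}f)-\Tr(\mathscr{D}f)=\dots$ — this needs care, so in practice I would instead derive $\mathscr{K}^2+\mathscr{V}\mathscr{W}=\tfrac14 I$ directly by applying $\tfrac12(\TTr_i+\TTr_e)$ to the identity $\mathscr{D}f=-\mathscr{S}(\mathscr{W}f)+(\text{correction in }H^1(\mathbb{R}^n))$ that expresses the Calderón jump formula, using $\TTr_i\mathscr{S}=\tfrac12 I$-shifted data from Theorem \ref{T:K}. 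Either way the algebra is short once the bijectivity statements of Section \ref{Sec:TrProb} are in hand. \textbf{The main obstacle} is purely notational: keeping straight which operator inverts which, and ensuring that the silent "extend by zero" conventions of Remark \ref{R:extendbyzero} are applied consistently when passing between $(V_{1,\mathscr{S}})'$, $(V_{1,\mathscr{D}})'$ and $(H^1(\mathbb{R}^n\setminus\partial\Omega))'$; there is no analytic difficulty beyond Lemma \ref{L:spaces} and Theorem \ref{T:K}.
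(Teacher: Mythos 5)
Your reduction is sound only in its algebraic shell: continuity and $\mathcal{C}_i+\mathcal{C}_e=I$ are immediate, and $\mathcal{M}^2=\frac14 I$ does give $\mathcal{C}_i^2=\mathcal{C}_i$ and $\mathcal{C}_e^2=\mathcal{C}_e$ by expanding the square. But note that the paper argues in exactly the opposite direction (it proves $\mathcal{C}_i^2=\mathcal{C}_i$ first, by a solution-theoretic argument, and then reads off $\mathcal{M}^2=\frac14 I$), so in your plan the entire burden falls on your derivations of the four block identities --- and these do not work as sketched. Pairing two potentials of the same type cannot produce the intertwining relations: expanding $\left\langle \mathscr{S}g,\mathscr{S}h\right\rangle_{H^1(\mathbb{R}^n\setminus\partial\Omega)}$ via \eqref{EqGreenInt} and \eqref{E:extnormalder} gives $\big\langle \big(\tfrac12 I+\mathscr{K}^\ast\big)h-\big(-\tfrac12 I+\mathscr{K}^\ast\big)h,\mathscr{V}g\big\rangle_{\Hm,\Hp}=\left\langle h,\mathscr{V}g\right\rangle_{\Hm,\Hp}$; the $\mathscr{K}^\ast$ contributions cancel across the jump, so the dust settles only to the symmetry of $\mathscr{V}$, not to $\left\langle \mathscr{K}^\ast h,\mathscr{V}g\right\rangle=\left\langle \mathscr{K}^\ast g,\mathscr{V}h\right\rangle$. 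Likewise $\left\langle \mathscr{D}f_1,\mathscr{D}f_2\right\rangle$ yields only the symmetry of $\mathscr{W}$, since $\TTr_i\mathscr{D}f_1-\TTr_e\mathscr{D}f_1=-f_1$ kills the $\mathscr{K}$ terms. The identities $\mathscr{K}\mathscr{V}=\mathscr{V}\mathscr{K}^\ast$ and $\mathscr{W}\mathscr{K}=\mathscr{K}^\ast\mathscr{W}$ involve the compositions $\mathscr{D}\circ\Tr\circ\mathscr{S}$ and $\mathscr{S}\circ\frac{\partial}{\partial\nu}\circ\mathscr{D}$, which such pairings never see.

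The auxiliary claims you lean on for the remaining identities are also false: $\mathscr{S}\circ\big(-\frac{\partial}{\partial\nu}\big)$ restricted to $V_{1,\mathscr{D}}(\mathbb{R}^n\setminus\partial\Omega)$ cannot be $P_{1,\mathscr{S}}$, because $P_{1,\mathscr{S}}$ annihilates $V_{1,\mathscr{D}}(\mathbb{R}^n\setminus\partial\Omega)$ by orthogonality while $\mathscr{S}\mathscr{W}f\neq 0$ for $f\neq 0$ ($\mathscr{S}$ and $\mathscr{W}$ are bijections); and $\mathscr{D}f+\mathscr{S}\mathscr{W}f$ is not a \enquote{correction in $H^1(\mathbb{R}^n)$}, since its trace jump equals $-f\neq 0$. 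The missing idea, which is the paper's key step, is purely solution-theoretic: for $u=\mathscr{S}g-\mathscr{D}f$ the cut-off $u\mathds{1}_\Omega$ is again a weak solution of the transmission problem, now with jump data equal to the interior Cauchy data $\big(\Tr_i u,\frac{\partial_i u}{\partial\nu}\big)$, so uniqueness (Corollary \ref{C:superpos}) identifies it with the corresponding superposition of layer potentials; this is exactly the statement $\mathcal{C}_i^2=\mathcal{C}_i$, and the four identities drop out. For instance, the correct version of your cut-off formula is $\mathscr{D}f\,\mathds{1}_\Omega=\mathscr{D}\big(\big(\tfrac12 I-\mathscr{K}\big)f\big)-\mathscr{S}\mathscr{W}f$; applying $\TTr_i$ gives $\mathscr{K}^2+\mathscr{V}\mathscr{W}=\tfrac14 I$ and applying $\frac{\partial_i}{\partial\nu}$ gives $\mathscr{W}\mathscr{K}=\mathscr{K}^\ast\mathscr{W}$, with the other two identities coming from $\mathscr{S}g\,\mathds{1}_\Omega$. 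Without this (or an equivalent) step your argument does not close.
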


\begin{proof} Continuity and the first identity are clear. To see that $\mathcal{C}_i^2=\mathcal{C}_i$, suppose that $f\in\Hp$ and $g\in\Hm$ are given, and that $u=\Scal g-\Dcal f$ is the unique weak solution to \eqref{EqTrProblem} with $k=1$, with $\llbracket \partial_{\nu} u\rrbracket=g$ and with $\llbracket\Tr u\rrbracket=f$ in place of $\llbracket\Tr u\rrbracket=-f$. Then \eqref{E:VW} and Theorem \ref{T:K} give
\begin{equation}\label{E:evalCi}
\mathcal{C}_i
\begin{pmatrix}
f\\[1ex] g
\end{pmatrix}
=
\begin{pmatrix}
\displaystyle\Vcal g-\left(-\frac{1}{2}I+\Kcal\right)f\\[1ex]
\displaystyle\left(\frac{1}{2}I+\Kcal^*\right)g+\Wcal f
\end{pmatrix}
=
\begin{pmatrix}
\Tr\Scal g-\Tr_i\Dcal f\\[1ex] \displaystyle \partial_{\nu,i}\Scal g-\displaystyle\partial_\nu\Dcal f
\end{pmatrix}
=
\begin{pmatrix}
\Tr_i u\\[1ex] \displaystyle\partial_{\nu,i}u
\end{pmatrix}.
\end{equation}
Now let $v$ be the unique weak solution to problem \eqref{EqTrProblem} with $k=1$, with $\llbracket\Tr u\rrbracket=\Tr_i u$ and $\llbracket \partial_{\nu} u\rrbracket=\partial_{\nu,i}u$. Since $u\mathds{1}_\Omega$ is also a weak solution to this problem, it follows that 
$v=u\mathds{1}_\Omega$ and consequently
\begin{equation*}
\mathcal{C}_i^2
\begin{pmatrix}
 f\\[1ex] g
\end{pmatrix}
=\mathcal{C}_i
\begin{pmatrix}
\Tr_i u\\[1ex] \displaystyle\partial_{\nu,i}u
\end{pmatrix}
=
\begin{pmatrix}
\Tr_i v\\[1ex] \displaystyle\partial_{\nu,i}v
\end{pmatrix}
=
\begin{pmatrix}
\Tr_i u\\[1ex] \displaystyle\partial_{\nu,i}u
\end{pmatrix}
=
\mathcal{C}_i
\begin{pmatrix}
f\\[1ex] g
\end{pmatrix}.
\end{equation*}
In the same way, we prove $\mathcal{C}_e^2=\mathcal{C}_e$. The remaining identities then follow.
\end{proof}

\begin{remark}
The unique weak solution $u=\Scal g-\Dcal f$ of \eqref{EqTrProblem} with $k=1$ and boundary data $f\in\Hp$ and  
$g\in \Hm$ is zero on $\R^n\backslash \overline{\Omega}$ if and only if $\Tr_e u=0$ and $\partial_{\nu,e}u=0$. By \eqref{E:evalCi} this is the case if and only if $\mathcal{C}_i\binom{f}{g}=\binom{f}{g}$, which happens if and only if $(f,g)$ is an element of the graph of the Poincar\'e-Steklov operator $\Ascr_1$ for $\Omega$. As in \cite[Section 5]{STEINBACH-2001}, an evaluation of \eqref{E:evalCi} gives
\begin{equation}\label{E:PSidentity}
\Ascr_1=\mathcal{V}^{-1}\Big(\frac12 I+\Kcal\Big)=\mathcal{W}+\Big(\frac12 I+\Kcal^\ast\Big)\mathcal{V}^{-1}\Big(\frac12 I+\Kcal\Big).
\end{equation}
\end{remark}

Now suppose that $\Omega$ is two-sided $\dot{H}^1$-admissible. By Lemma \ref{L:spaceso}, Corollary \ref{C:DLbijectiveo} and Corollary \ref{C:SLbijectiveo}, the operators $\dot{\Vcal}:\Hmo\to \Hpo$ and $\dot{\Wcal}:\Hpo\to \Hmo$, defined by  
\begin{equation}\label{E:VWo}
\dot{\Vcal}:=\dTr\circ \dot{\Scal}\quad \text{and}\quad \dot{\Wcal}:=-\dot\partial_\nu\circ \dot{\Dcal},
\end{equation}
are bounded linear bijections with bounded inverses. Let us define the operator $\dot{\mathcal{M}}:\Hpo\times\Hmo\to \Hpo\times\Hmo$ by 
\[\dot{\mathcal{M}}\;:=\;
\begin{pmatrix}
-\dot{\Kcal} & \dot{\Vcal}\\[1.5ex]
\dot{\Wcal} & \dot{\Kcal}^*
\end{pmatrix}
\]
and set 
\begin{equation*}
\dot{\mathcal{C}}_i:=\frac{1}{2}I+\dot{\mathcal{M}}\qquad\mbox{and}\qquad\dot{\mathcal{C}}_e:=\frac{1}{2}I-\dot{\mathcal{M}}.
\end{equation*}

The same arguments as before give the following counterpart of Theorem \ref{NP-Sym}.

\begin{theorem}\label{ThCalderonProjo}
Let $\Omega$ be two-sided $\dot{H}^1$-admissible. The operators $\dot{\mathcal{C}}_i$ and $\dot{\mathcal{C}}_e$ are continuous projectors and satisfy $\dot{\mathcal{C}}_i+\dot{\mathcal{C}}_e=I$. In addition, $\dot{\mathcal{M}}^2=\frac14 I$, that is,
\begin{equation}
\begin{cases}
\dot{\Kcal}\dot{\Vcal}=\dot{\Vcal}\dot{\Kcal}^*,\\[2ex]
\dot{\Wcal}\dot{\Kcal}=\dot{\Kcal}^*\dot{\Wcal},
\end{cases}
\qquad\mbox{and}\qquad
\begin{cases}
\displaystyle\dot{\Kcal}^2 +\dot{\Vcal}\dot{\Wcal}= \frac{1}{4}I,\\
\displaystyle(\dot{\Kcal}^*)^2 +\dot{\Wcal}\dot{\Vcal} = \frac{1}{4}I.
\end{cases}
\end{equation}
\end{theorem}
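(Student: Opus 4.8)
Here I would simply follow the proof of Theorem~\ref{NP-Sym} line by line, replacing every object by its $\dot{H}^1$-counterpart: $\dot{\mathscr{V}}$ and $\dot{\mathscr{W}}$ from \eqref{E:VWo} in place of $\mathscr{V}$ and $\mathscr{W}$, Theorem~\ref{T:Kdot} in place of Theorem~\ref{T:K}, and Lemma~\ref{L:spaceso} together with Corollaries~\ref{C:DLbijectiveo}, \ref{C:SLbijectiveo} and~\ref{C:superposo} in place of their inhomogeneous analogues. Continuity of $\dot{\mathcal{C}}_i$ and $\dot{\mathcal{C}}_e$ and the identity $\dot{\mathcal{C}}_i+\dot{\mathcal{C}}_e=I$ are immediate from the definitions, so the whole content is idempotency.

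For that, fix $f\in\Hpo$, $g\in\Hmo$ and let $u=\dot{\mathscr{S}}g-\dot{\mathscr{D}}f$, understood in the $\dot{H}^1$-sense; by Corollary~\ref{C:superposo} this is the unique weak solution of \eqref{EqTrProblem} with $k=0$ whose jump in normal derivative equals $g$ and whose jump in trace equals $f$ (rather than $-f$). Using \eqref{E:VWo} and Theorem~\ref{T:Kdot}, and exactly as in \eqref{E:evalCi}, one computes
\[
\dot{\mathcal{C}}_i\begin{pmatrix}f\\ g\end{pmatrix}
=\begin{pmatrix}\dot{\mathscr{V}}g+\bigl(\tfrac12 I-\dot{\mathscr{K}}\bigr)f\\[1ex]
\dot{\mathscr{W}}f+\bigl(\tfrac12 I+\dot{\mathscr{K}}^\ast\bigr)g\end{pmatrix}
=\begin{pmatrix}\dTr_i u\\[1ex] \frac{\dot{\partial}_i u}{\partial\nu}\end{pmatrix}.
\]
Let $v$ be the unique weak solution in the $\dot{H}^1$-sense of \eqref{EqTrProblem} with $k=0$ and with the jump data on the right-hand side. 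Since the $\dot{H}^1$-class $u\mathds{1}_\Omega$ obtained by extending $u_i$ into $\mathbb{R}^n\setminus\overline{\Omega}$ by a constant is another weak solution of this problem (harmonic in $\Omega$, constant outside, with the same two jumps), uniqueness forces $v=u\mathds{1}_\Omega$ in $\dot{H}^1(\mathbb{R}^n\setminus\partial\Omega)$, hence $\dot{\mathcal{C}}_i^2\binom{f}{g}=\binom{\dTr_i v}{\frac{\dot{\partial}_i v}{\partial\nu}}=\binom{\dTr_i u}{\frac{\dot{\partial}_i u}{\partial\nu}}=\dot{\mathcal{C}}_i\binom{f}{g}$. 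Running the same argument with $u\mathds{1}_{\mathbb{R}^n\setminus\overline{\Omega}}$ gives $\dot{\mathcal{C}}_e^2=\dot{\mathcal{C}}_e$. Finally $\dot{\mathcal{C}}_i=\tfrac12 I+\dot{\mathcal{M}}$ and $\dot{\mathcal{C}}_i^2=\dot{\mathcal{C}}_i$ yield $\dot{\mathcal{M}}^2=\tfrac14 I$, and comparing the four entries of the $2\times2$ block operator $\dot{\mathcal{M}}^2$ with $\tfrac14 I$ produces the four stated operator identities.

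The one place where genuine care is needed — and where this differs from Theorem~\ref{NP-Sym} — is the bookkeeping of additive constants: in the $\dot{H}^1$-setting the trace jump condition in \eqref{EqTrProblem} with $k=0$ is a priori only an equality modulo constants, and the identification $v=u\mathds{1}_\Omega$ must be read as an equality of $\dot{H}^1$-classes. I would handle this exactly as in the remark following Corollary~\ref{C:DLbijectiveo} and in Remarks~\ref{R:readjustedversion} and~\ref{R:readjustsingle}, passing to the zero trace jump readjusted representatives (and, wherever $\dot{\mathscr{S}}$ enters through Theorem~\ref{T:Kdot}(ii), to $\overline{\mathscr{S}}$) so that all jump relations become genuine equalities in $\Hpo$ and $\Hmo$ and the uniqueness statements apply verbatim. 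Once the constants are pinned down this way, the argument needs no analytic input beyond Theorem~\ref{T:Kdot} and the bijectivity corollaries, and I expect the constant-tracking to be the only real obstacle.
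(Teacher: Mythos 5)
Your proposal is correct and follows essentially the same route as the paper, which proves Theorem~\ref{ThCalderonProjo} simply by invoking the argument of Theorem~\ref{NP-Sym} with $\dot{\mathscr{V}}$, $\dot{\mathscr{W}}$, Theorem~\ref{T:Kdot} and the homogeneous bijectivity corollaries in place of their inhomogeneous counterparts; your computation of $\dot{\mathcal{C}}_i\binom{f}{g}$ and the identification of the iterated solution with $u\mathds{1}_\Omega$ (respectively $u\mathds{1}_{\mathbb{R}^n\setminus\overline{\Omega}}$ for $\dot{\mathcal{C}}_e$) match the paper's proof step for step. Your worry about additive constants is legitimate but milder than you suggest: since all jump relations and uniqueness statements are formulated directly in the quotient spaces $\Hpo$ and $\Hmo$, the argument closes without passing to readjusted representatives, which are only needed when one wants pointwise-type statements.
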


\subsection{Invertibility and isometries}\label{Subsec:IsomInvert}

Using the operators in \eqref{E:VW} as metrics, we can introduce 
\[\|\cdot\|^2_{\Hm, \Vcal}:=\langle \cdot,\Vcal\cdot\rangle_{\Hm,\Hp}, \qquad \|\cdot\|^2_{\Hp,\Vcal^{-1}}:=\langle\Vcal^{-1}\cdot,\cdot\rangle_{\Hm,\Hp},\]
where $\Vcal^{-1}$ denotes the inverse of $\Vcal$, and 
\[\|\cdot\|^2_{\Hp,\Wcal}:=\langle\Wcal\cdot,\cdot\rangle_{\Hm,\Hp}.\]
These quadratic forms give equivalent Hilbert space norms on $\Hm$ and $\Hp$, respectively, as shown in the following lemma.

\begin{lemma}\label{L:coercive}
Let $\Omega$ be two-sided $H^1$-admissible. 
\begin{enumerate}
\item[(i)] The operator $\Scal$ is an isometry with respect to $\|\cdot\|_{\Hm,\Vcal}$, and 
 $\Dcal$ is an isometry with respect to $\|\cdot\|_{\Hp,\Wcal}$.
\item[(ii)] There are constants $\alpha,\beta>0$ such that  
\begin{gather*}
\frac{1}{\alpha}\|g\|^2_{\Hm}\le\|g\|^2_{\Hm,\Vcal}\le \alpha\|g\|^2_{\Hm},\quad g\in \Hm,\\
\frac{1}{\alpha}\|f\|^2_{\Hp}\le\|f\|^2_{\Hp,\Vcal^{-1}}\le \alpha\|f\|^2_{\Hp},\quad f\in \Hp,
\end{gather*}
and 
\[\frac{1}{\beta}\|f\|^2_{\Hp}\le\|f\|^2_{\Hp, \Wcal}\le \beta\|f\|^2_{\Hp},\quad f\in \Hp.\]
\end{enumerate}
\end{lemma}

\begin{proof}
Using \eqref{E:VW} together with Corollaries  \ref{C:orthoV}, \ref{C:DLbijective} and Corollary \ref{C:SLbijective}, statement (i) follows. Statement (ii) follows using (i), together with the boundedness of the operators $\Scal$ and $\Dcal$ and their inverses.
\end{proof}

As in the classical case, one can observe invertibility properties depending on a spectral parameter $\lambda\in \C$. The following observation for the operators is similar to \cite[Lemma 2.4]{AMMARI-2004}.
\begin{lemma}\label{L:Kastinjective}
Let $\Omega$ be two-sided $H^1$-admissible and $\lambda\in\R$ with $|\lambda|\geq \frac12$. Then $\lambda I+\Kcal^\ast:\Hm\to \Hm$ is injective.
\end{lemma}

\begin{proof}
Assume that there is some nonzero $g\in \ker(\lambda I+\Kcal^\ast)$. Then $\Scal g\neq 0$ by Corollary \ref{C:SLbijective}, and since $\Scal g$ is $1$-harmonic in $\R^n\backslash \partial\Omega$, both $\Scal g|_\Omega$ and $\Scal g|_{\R^n\backslash\overline{\Omega}}$ must be nonzero, otherwise the null jump in trace would yield $\Scal g=0$. By Theorem \ref{T:K}, together with \eqref{E:GGi} and \eqref{E:GGe}, this implies that 
\[A:=\Big(\frac12-\lambda\Big)\left\langle g,\Vcal g\right\rangle_{\Hm,\Hp}=\Big\langle \Big(\frac12 I+\Kcal^\ast\Big)g,\Tr\Scal g\Big\rangle_{\Hm,\Hp}>0 \]
and 
\[B:=\Big(\frac12+\lambda\Big)\left\langle g,\Vcal g\right\rangle_{\Hm,\Hp}=-\Big\langle \Big(-\frac12 I+\Kcal^\ast\Big)g,\Tr\Scal g\Big\rangle_{\Hm,\Hp}>0. \]
For $|\lambda|=\frac12$ this is impossible. For $|\lambda|>\frac12$ we can use the fact that $\left\langle g,\Vcal g\right\rangle_{\Hm,\Hp}=\|g\|^2_{\Hm, \Vcal}>0$ by Lemma \ref{L:coercive}. This gives 
$\lambda=\frac12\frac{B-A}{B+A}\in \big[-\frac12,\frac12\big]$,
which again is impossible.
\end{proof}

For the operators $\pm\frac{1}{2}I+\Kcal$ we have an analog of \cite[Theorem 5.1]{STEINBACH-2001}.

\begin{theorem}\label{T:makeiso}
Let $\Omega$ be two-sided $H^1$-admissible. For any $f\in \Hp$ we have
\[(1-c)\|f\|_{\Hp,\Vcal^{-1}}\le \Big\|\Big(\pm\frac{1}{2}I+\Kcal\Big)f\Big\|_{\Hp,\Vcal^{-1}}\le c\|f\|_{\Hp,\Vcal^{-1}},\]
where 
\[c=\frac{1}{2}+\sqrt{\frac{1}{4}-\frac{1}{\alpha\beta}}<1\]
with constants $\alpha$ and $\beta$ as in Lemma \ref{L:coercive} chosen large enough so that $\alpha\beta>4$. In particular, 
the operators $\pm\frac{1}{2}I+\Kcal:\Hp\to\Hp$ are isomorphisms.
\end{theorem}

\begin{proof}
We can proceed similarly as in \cite[p. 744]{STEINBACH-2001}; note that by \eqref{E:PSidentity}, 
\begin{align}
\Big\|\Big(\frac12 I+\Kcal\Big)f\Big\|_{\Hp,\Vcal^{-1}}^2&=\Big\langle \Vcal^{-1}\Big(\frac12 I+\Kcal\Big)f,\Big(\frac12 I+\Kcal\Big)f\Big\rangle_{\Hm,\Hp}\notag\\
&=\left\langle\Ascr_1 f,f\right\rangle_{\Hm,\Hp}-\left\langle \Wcal f,f\right\rangle_{\Hp,\Hm}\notag
\end{align}
for any $f\in\Hp$; the first summand on the right-hand side is
\[\left\langle\Vcal\Ascr_1f,f\right\rangle_{\Hp,\Vcal^{-1}}\leq \Big\|\Big(\frac12 I+\Kcal\Big)f\Big\|_{\Hp,\Vcal^{-1}}\left\|f\right\|_{\Hp,\Vcal^{-1}},\]
and by Lemma \ref{L:coercive}, the second is bounded below by $\frac{1}{\alpha\beta}\left\|f\right\|_{\Hp,\Vcal^{-1}}^2$.
\end{proof}

\begin{remark}
The contractivity of $(\pm\frac{1}{2}I+\Kcal)$ ensures the convergence of the associated Neumann series \cite{NEUMANN-1870}: Recovering the jump in trace $f\in \Hp$ of a transmission solution for $(\Delta-1)$ with zero jump in normal derivative from its exterior trace $\varphi\in \Hp$ amounts to solving the boundary integral equation of the second kind $-\frac{1}{2}f-\Kcal f=\varphi$
in $\Hp$, and its unique solution $f$ is given by the Neumann series
$f=\sum_{\ell=0}^{+\infty} \left(\frac{1}{2}I+\Kcal\right)^\ell \varphi$ 
which converges in $\Hp$.
Similarly, one can invert the operator $-\frac{1}{2}I+\Kcal^*$ on $\Hm$ to recover the jump in normal derivative of a solution with no jump in trace from its exterior normal derivative, see \cite[Theorem 3.2, p.741]{STEINBACH-2001}.
\end{remark}

Based on \eqref{E:VWo}, one can similarly introduce equivalent Hilbert norms $\left\|\cdot\right\|_{\Hmo, \dot{\Vcal}}$, $\|\cdot\|_{\Hpo,\dot{\Vcal}^{-1}}$ and $\|\cdot\|_{\Hpo,\dot{\Wcal}}$ on $\Hmo$ and $\Hpo$, and proceed as before to obtain the following.

\begin{theorem}\label{T:makeisoo}
Let $\Omega$ be two-sided $\dot{H}^1$-admissible. Then counterparts of Lemma \ref{L:coercive} (i) and (ii),
Lemma \ref{L:Kastinjective} and Theorem \ref{T:makeiso} hold for $\dot{\Vcal}$, $\dot{\Wcal}$, $\dot{\Scal}$, $\dot{\Dcal}$, $\dot{\Kcal}^\ast$ and $\dot{\Kcal}$.
\end{theorem}

A perturbation argument gives certain spectral properties of the Neumann-Poincaré operators in the spirit of well-known results in the Lipschitz case \cite{CHANG-2008, ESCAURIAZA-1992,FABES-1978, MITREA-2002, PERFEKT-2017,VERCHOTA-1984}.

\begin{theorem}\label{ThSpecNP}
Let $\Omega$ be two-sided $H^1$-admissible. For $\lambda\in\C$, if $|\lambda-\frac12|\geq 1$ or $|\lambda+\frac12|\geq 1$, then
the operators $\lambda I+\Kcal$ and $\lambda I+\Kcal^*$ are invertible on $\Hp$ and $\Hm$ respectively. The complex spectra of $\Kcal$ and $\Kcal^*$ lie in the intersection of the open disks with radius $1$ and centers $\pm\frac12$, the real spectra of $\Kcal$ and $\Kcal^*$ are included in $(-\frac12,\frac12)$.
\end{theorem}

\begin{proof}
For any $\lambda\neq-\frac{1}{2}$ we have 
$\lambda I+\Kcal=\left(\lambda+\frac{1}{2}\right)I+\left(-\frac{1}{2}I+\Kcal\right)$.
By Theorem~\ref{T:makeiso}, $\|-\frac{1}{2}I+\Kcal\|< 1$. Hence, by~\cite[Theorem 1.2.9]{DAVIES-2007}, $\lambda I+\Kcal$ is invertible if $|\lambda+\frac{1}{2}|\ge1$. Proceeding in the same way with $\frac{1}{2}I+\Kcal$ instead, we find that $\lambda I+\Kcal$ is invertible if $|\lambda-\frac{1}{2}|\ge1$. By Banach's closed range theorem and Lemma~\ref{L:Kastinjective}, the same holds for $\Kcal^*$, see~\cite[Theorem 8.1.5]{DAVIES-2007}.
\end{proof}

\begin{figure}[h]
\centering
\begin{tikzpicture}
\draw [->] (-3.7,0) -- (3.7,0) ;
\draw [->] (0,-2.7) -- (0,2.7) ;
\draw (0,3) node {$\mathrm{Im}(\lambda)$} ;
\draw (4.3,0) node {$\mathrm{Re}(\lambda)$} ;
\draw (1,0) node[below right] {$\frac12$} ;
\draw (1,-0.1) -- (1,0.1) ;
\draw (-1,0) node[below left] {$-\frac12$} ;
\draw (-1,-0.1) -- (-1,0.1) ;
\draw (3,0) node[below right] {$\frac32$} ;
\draw (3,-0.1) -- (3,0.1) ;
\draw (-3,0) node[below left] {$-\frac32$} ;
\draw (-3,-0.1) -- (-3,0.1) ;
\draw (-0.4,{sqrt(3)+0.45}) node {$i\frac{\sqrt3}2$} ;
\draw (-0.1,{sqrt(3)}) -- (0.1,{sqrt(3)}) ;
\draw (-0.5,{-sqrt(3)-0.4}) node {$-i\frac{\sqrt3}2$} ;
\draw (-0.1,{-sqrt(3)}) -- (0.1,{-sqrt(3)}) ;
\fill [color=gray, opacity=0.2] plot [domain=-0.5:0, samples=300, scale=2] (\x,{sqrt(1-(\x-0.5)^2)})
-- plot [domain=0:0.5, samples=100, scale=2] (\x,{sqrt(1-(\x+0.5)^2)})
-- plot [domain=0.5:0, samples=100, scale=2] (\x,{-(sqrt(1-(\x+0.5)^2))}) 
-- plot [domain=0:-0.5, samples=100, scale=2] (\x,{-sqrt(1-(\x-0.5)^2)}) -- cycle ;
\draw [densely dotted] (-1,0) circle (2) ;
\draw [densely dotted] (1,0) circle (2) ;
\end{tikzpicture}
\caption{Graphic representation of the values of $\lambda$ for which Theorems~\ref{ThSpecNP} and~\ref{ThSpecNPo} apply, which are all $\lambda\in\C$ outside the gray area (or on its boundary). The dotted lines correspond to the circles of center $\pm\frac12$ and radius $1$.}
\label{fig:specK}
\end{figure}

Analogous arguments give a parallel result in the homogeneous case.

\begin{theorem}\label{ThSpecNPo}
Let $\Omega$ be two-sided $\dot{H}^1$-admissible. For $\lambda\in\C$, if $|\lambda-\frac12|\geq 1$ or $|\lambda+\frac12|\geq 1$, then
the operators $\lambda I+\dot\Kcal$ and $\lambda I+\dot\Kcal^*$ are invertible on $\Hpo$ and $\Hmo$ respectively. The complex spectra of $\dot\Kcal$ and $\dot\Kcal^*$ lie in the intersection of the open disks with radius $1$ and centers $\pm\frac12$, 
the real spectra of $\dot\Kcal$ and $\dot\Kcal^*$ are included in $(-\frac12,\frac12)$.
\end{theorem}

The values of $\lambda\in\C$ for which Theorems~\ref{ThSpecNP} and~\ref{ThSpecNPo} hold are represented in Figure~\ref{fig:specK}.

\section{Applications to imaging}\label{Sec:Imaging}


We generalize three results from \cite{AMMARI-2004}. There they were shown for Lipschitz domains; here we establish them for two-sided admissible domains $\Omega$. Theorem~\ref{ThImagRepr} is a boundary representation formula for the unique weak solution to a specific transmission problem for the Laplacian and generalizes~\cite[Theorem 2.17]{AMMARI-2004}, see also~\cite{KANG-1996,KANG-1999}. Theorem~\ref{monotonous} is a uniqueness results for subdomain identification through a single boundary measurement in the monotone case, partly generalizing \cite[Theorem A.7, p.220]{AMMARI-2004}, see also
\cite{BELLOUT-1988}. Finally, we prove a similar result for disks in $\R^2$ in Theorem~\ref{ThImagDisks}, generalizing~\cite[Theorem A]{KANG-1996} to the case of a surrounding extension domain and with boundary data understood in the sense of the dual trace space $\Hmo$ instead of $L^2_0(\del \Omega)$.

\subsection{Representation formula}\label{SubsecImReprF}

Let $\Omega$ and $D$ be two two-sided $\dot{H}^1$-admissible domains in $\R^n$ such that $D\subset\subset\Omega$, see Figure~\ref{Fig:Imaging}. We write $\dTTr_{\bord,i}$ for the interior trace operator with respect to $\Omega$.

Let $\chi\in C_c^\infty(\Omega)$ be such that $0\leq \chi\leq 1$ and $\chi\equiv 1$ on a neighbourhood of $\overline{D}$. Given $k\in(0,1)\cup(1,+\infty)$ and $g\in \Hmo$, we call $u\in \dot{H}^1(\Omega)$ a \emph{weak solution} of the Neumann problem 
\begin{equation}\label{incl}
\begin{cases}
\displaystyle\nabla\cdot\Big(\big(1+(k-1)\mathds{1}_D\big)\nabla u\Big)=0 &\mbox{on }\Omega,\\
\displaystyle \frac{\partial_iu}{\partial \nu}\Big|_{\bord}=g
\end{cases}
\end{equation}
\emph{in the $\dot{H}^1$-sense} if 
\begin{equation}\label{E:weakreconstruct}
\int_\Omega{\big(1+(k-1)\mathds{1}_D\big)\nabla u\cdot\nabla v\,\dx}=0
\end{equation}
for all $v\in C_c^\infty(\Omega)$ and $\left\langle u,(1-\chi)v\right\rangle_{\dot{H}^1(\Omega)}=\big\langle g,\dTTr_{\bord,i} v\big\rangle_{\Hmo,\Hpo}$ for all $v\in \dot{H}^1(\Omega)$. 

Since $\min(1,k)\leq 1+(k-1)\mathds{1}_D\leq \max(1,k)$, the existence of a unique weak solution $u$ of \eqref{E:weakreconstruct} is clear from the Riesz representation theorem and by the following remark, which also explains 
that the unique weak solution of \eqref{E:weakreconstruct} does not depend on the choice of $\chi$.

\begin{remark}\label{R:justify}\mbox{}
\begin{enumerate}
\item[(i)] We can extend the definition \eqref{EqGreenInto} of the weak interior normal derivative $\frac{\dot{\partial}_i}{\partial \nu}|_{\partial\Omega}$ with respect to $\Omega$ to the space
\[\dot{H}^1_{\Delta}(\Omega\backslash \overline{D}):=\{u\in \dot{H}^1(\Omega\backslash \overline{D})\;|\;\Delta u\in L^2(\Omega\backslash \overline{D})\}.\]
Given $u\in \dot{H}^1_{\Delta}(\Omega\backslash\overline{D})$, there is a unique element $g\in \Hmo$ such that 
\begin{equation}\label{EqGreenext}
\big\langle g,\dTTr_{\bord,i} v\big\rangle_{\Hmo,\Hpo}=\int_\Omega (\Delta u) (1-\chi)v\: {\rm d}x+\int_\Omega \nabla u \nabla ((1-\chi)v)\: {\rm d}x
\end{equation} 
for all $v\in \dot{H}^1(\Omega)$.
We obviously have $\dTTr v=\dTTr (1-\chi)v$, and $(1-\chi)v$ vanishes on a neighbourhood of $\overline{D}$. The right-hand side of \eqref{EqGreenext} is bounded by 
\[\|\chi\|_{C^1(\Omega)}\Big(\|\Delta u\|_{L^2(\Omega\backslash \overline{D})}+\|u\|_{\dot{H}^1(\Omega\backslash \overline{D})}\Big)\|v\|_{\dot{H}^1(\Omega\backslash \overline{D})}.\]
It is not difficult to see that $g$ does not depend on the particular choice of $\chi$. Since \eqref{EqGreenext} extends \eqref{EqGreenInto}, we could denote $g$ again by $\frac{\dot{\partial}_i u}{\partial \nu}$. 

\item[(ii)] We agree to write $\dot{V}_0(\Omega\backslash \overline{D})$ for the orthogonal complement of $C_c^\infty(\Omega\backslash \overline{D})$ in $\dot{H}^1(\Omega\backslash \overline{D})$. (It is not hard to show that under the stated assumptions also $\Omega\backslash \overline{D}$ is $\dot{H}^1$-admissible, so that this agreement is consistent with our former notation.) The restriction $\dot{\partial}_{\nu,i}|_{\partial\Omega}$ of $\frac{\dot{\partial}_i }{\partial \nu}$ to  $\dot{V}_0(\Omega\backslash \overline{D})\subset \dot{H}^1_{\Delta}(\Omega\backslash \overline{D})$ is a bounded linear operator. By \eqref{E:weakreconstruct} a weak solution $u$ of \eqref{incl} is an element of $\dot{V}_0(\Omega\backslash \overline{D})$. If $u\in \dot{V}_0(\Omega\backslash \overline{D})$ and $\chi'$ is another function with the properties specified for $\chi$, we have 
$\left\langle u,(\chi-\chi')v\right\rangle_{\dot{H}^1(\Omega)}=0$ for all $v\in \dot{H}^1(\Omega)$ by orthogonality. Therefore the Neumann boundary condition does not depend on the choice of $\chi$. By \eqref{EqGreenext} the weak solution $u$ of \eqref{incl} is the unique element of $\dot{V}_0(\Omega\backslash \overline{D})$ such that $\dot{\partial}_{\nu,i}u|_{\partial\Omega}=g$ in $\Hmo$, as desired.
\end{enumerate}
\end{remark}

\begin{figure}[h]
\centering
\begin{tikzpicture}
\draw (0,0) node {\includegraphics[scale=0.6]{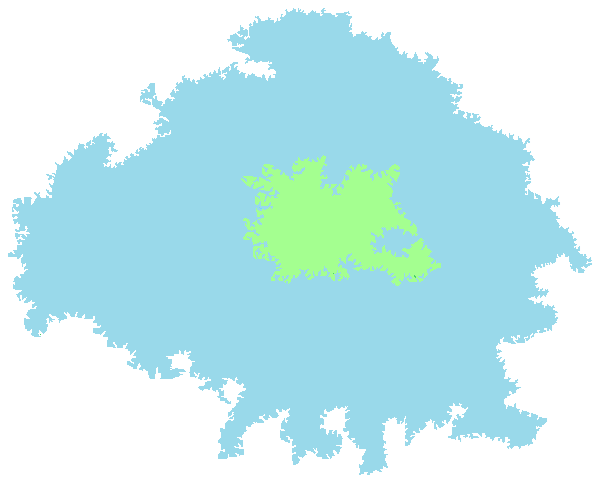}};
\draw (0.3,0.2) node{$D$};
\draw (1,-1.5) node{$\Omega$};
\end{tikzpicture}
\caption{An illustration of the imaging setting with an inclusion $D$ inside a domain $\Omega$, both two-sided $\dot H^1$-admissible. The purpose of the imaging problem is to identify the inclusion $D$ based on measurements on $\bord$.}
\label{Fig:Imaging}
\end{figure}

We write $\dot{\Scal}_{\partial \Omega}$ and $\dot{\Scal}_{\partial D}$ for the single layer potential operators with respect to $\Omega$ and $D$,  $\dot{\Dcal}_{\partial\Omega}$ for the double layer operator with respect to $\Omega$ and $\dot{\Kcal}_{\partial D}$ for the Neumann-Poincaré operator with respect to $D$, all in the $\dot{H}^1$-sense. By 
$\dot{\partial}_{\nu,i}u|_{\partial D}$ and $\dot{\partial}_{\nu,e}u|_{\partial D}$ we denote the interior and exterior normal derivative in the $\dot{H}^1$-sense with respect to $D$ of $u$ of $u\in \dot{V}_0(D)$ respectively $u\in \dot{V}_0(\Omega\backslash \overline{D})$; the extension of the exterior normal derivative to this latter space can be justified similarly as in Remark \ref{R:justify}. We write $\big\llbracket\dot{\partial}_{\nu}u\big\rrbracket_{\partial D}$ for their difference, and $\dot{\partial}_\nu u|_{\partial D}$ for their common value if they coincide.

The following representation formula for the weak solution $u$ to~\eqref{incl} is a generalization of \cite[Theorem 2.17]{AMMARI-2004}.

\begin{theorem}\label{ThImagRepr}
Let $\Omega$ and $D$ be two-sided $\dot{H}^1$-admissible and such that $D\subset\subset\Omega$, and let $k\in(0,1)\cup(1,+\infty)$. Given $g\in \Hmo$, let $u\in \dot{H}^1(\Omega)$ be the unique weak solution of \eqref{incl} in the $\dot{H}^1$-sense, let $f:=\dTTr_{\bord,i} u$ and 
$H=\dot{\Scal}_{\bord}\,g-\dot{\Dcal}_{\bord} f$. Then we have
\begin{equation}\label{rep}
u=H-\dot{\Scal}_{\partial D}\varphi,
\end{equation}
seen as an equality in $\dot{H}^1(\Omega)$, where $\varphi$ is the unique element of $\dot{\mathcal{B}}'(\partial D)$ such that 
\begin{equation}\label{phi}
\left(\frac{k+1}{2(k-1)}I+\dot{\Kcal}_{\partial D}^*\right)\varphi=\displaystyle\dot{\partial}_{\nu}H\big|_{\partial D}.
\end{equation}
The decomposition \eqref{rep} is the unique decomposition of $u$ into an element $H$ of $\dot{V}_0(\Omega)$ and a single layer potential with respect to $\partial D$, $-\dot{\Scal}_{\partial D}\varphi$. The function $H-\dot{\Scal}_{\partial D}\varphi$ is constant on $\R^n\backslash\overline{\Omega}$.
\end{theorem}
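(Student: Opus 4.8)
The plan is to reduce \eqref{incl} to a transmission problem on $\mathbb{R}^n\setminus\partial D$ (inside the fixed ambient domain $\Omega$), recognize the solution as a superposition of a "free" part $H$ and a single layer correction, and then solve for the density $\varphi$ via the Neumann--Poincar\'e calculus of Section \ref{Sec:Neumann-Poincaré}. First I would observe that, by construction, $H=\dot{\mathscr{S}}_{\bord}g-\dot{\mathscr{D}}_{\bord}f$ with $f=\dTTr_{\bord,i}u$ solves the transmission problem \eqref{EqTrProblem} for $\Omega$ with $k=0$, data $-f$ in the trace jump and $g$ in the normal derivative jump across $\partial\Omega$; by Corollary \ref{C:superposo} it is harmonic on $\mathbb{R}^n\setminus\partial\Omega$, and since $u$ and $H$ share the same interior Cauchy data $(f,g)$ on $\partial\Omega$, the difference $w:=u-H$ (extended by an appropriate constant on $\mathbb{R}^n\setminus\overline\Omega$, using Remark \ref{R:readjustedversion}) has zero trace jump and zero normal-derivative jump across $\partial\Omega$ and is therefore harmonic in all of a neighborhood of $\partial\Omega$; in fact $w$ is harmonic on $\mathbb{R}^n\setminus\overline D$. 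Hence the only surface on which $w$ can carry layer data is $\partial D$.

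Next I would compute the jumps of $u$ across $\partial D$. On $\Omega\setminus\overline D$ and on $D$ separately $u$ is harmonic in the $\dot H^1$-sense, and the weak formulation of \eqref{incl} forces $\llbracket\dTTr u\rrbracket_{\partial D}=0$ together with the conormal matching $\dfrac{\dot\partial_i u}{\partial\nu}\big|_{\partial D}=k\,\dfrac{\dot\partial_e u}{\partial\nu}\big|_{\partial D}$ (interior meaning from inside $D$). Since $H$ is harmonic across $\partial D$, its jumps there vanish, so $w=u-H$ has $\llbracket\dTTr w\rrbracket_{\partial D}=0$ and a prescribed normal-derivative jump $\psi:=\big\llbracket\frac{\dot\partial w}{\partial\nu}\big\rrbracket_{\partial D}\in(\mathcal{B}(\partial D))'$. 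By Corollary \ref{C:SLbijectiveo} applied to $D$, this identifies $w=-\dot{\mathscr{S}}_{\partial D}\varphi$ with $\varphi=-\psi$ (the sign is fixed by the convention in \eqref{E:transmiSLo}), which gives \eqref{rep} as an identity in $\dot H^1(\Omega)$. It remains to see that $\varphi$ is exactly the solution of \eqref{phi}. Feeding $u=H-\dot{\mathscr{S}}_{\partial D}\varphi$ into the conormal matching condition on $\partial D$ and using Theorem \ref{T:Kdot}(ii) for $\partial D$, namely $\frac{\dot\partial_i}{\partial\nu}\circ\dot{\mathscr{S}}_{\partial D}=\tfrac12 I+\dot{\mathscr{K}}^*_{\partial D}$ and $\frac{\dot\partial_e}{\partial\nu}\circ\dot{\mathscr{S}}_{\partial D}=-\tfrac12 I+\dot{\mathscr{K}}^*_{\partial D}$, one rearranges the equation $\frac{\dot\partial_i u}{\partial\nu}|_{\partial D}=k\,\frac{\dot\partial_e u}{\partial\nu}|_{\partial D}$ into $\big(\tfrac{k+1}{2(k-1)}I+\dot{\mathscr{K}}^*_{\partial D}\big)\varphi=\frac{\dot\partial H}{\partial\nu}|_{\partial D}$; uniqueness of $\varphi$ follows from Theorem \ref{ThSpecNPo} applied to $\partial D$ since $\big|\tfrac{k+1}{2(k-1)}\big|>\tfrac12$ for every $k\in(0,1)\cup(1,+\infty)$.

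Finally, for the displayed last statement --- that $H-\dot{\mathscr{S}}_{\partial D}\varphi$ is constant on $\mathbb{R}^n\setminus\overline\Omega$ --- I would argue as follows. The function $u$ a priori lives only on $\Omega$; the right-hand side $H-\dot{\mathscr{S}}_{\partial D}\varphi$, however, is defined on all of $\mathbb{R}^n\setminus\partial D$, hence in particular on $\mathbb{R}^n\setminus\overline\Omega$. On that exterior region both $H$ and $\dot{\mathscr{S}}_{\partial D}\varphi$ are harmonic in the $\dot H^1$-sense (the kernel of $\dot{\mathscr{S}}_{\partial D}$ is supported on $\partial D\subset\subset\Omega$, so it is harmonic off $\partial D$; $H$ is harmonic off $\partial\Omega$ by the first paragraph). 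Crossing $\partial\Omega$ from inside, $\dot{\mathscr{S}}_{\partial D}\varphi$ has no jump (it is harmonic there), and $H$ has the trace jump $\llbracket\dTTr H\rrbracket_{\partial\Omega}=-f$ and normal-derivative jump $g$ built in; but the extended function $\overline w=\overline{u-H}$ used above already accounts for these jumps, so the exterior restriction of $H-\dot{\mathscr{S}}_{\partial D}\varphi$ equals (a constant plus) the exterior part of $-\overline w$, which is $\dot H^1$-harmonic on the unbounded set $\mathbb{R}^n\setminus\overline\Omega$ with zero trace on $\partial\Omega$ --- equivalently, it lies in $\dot V_{0,\dot{\mathscr{D}}}\cap\dot V_{0,\dot{\mathscr{S}}}$ type position on the exterior component. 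By the uniqueness of the $\dot H^1$-sense Dirichlet problem on the exterior domain (Lemma \ref{Tr0isom}(iii) and the subsequent discussion of \eqref{DOo}), a $\dot H^1$-harmonic function on $\mathbb{R}^n\setminus\overline\Omega$ with zero trace on $\partial\Omega$ is the zero class, i.e.\ constant. Hence $H-\dot{\mathscr{S}}_{\partial D}\varphi$ is constant on $\mathbb{R}^n\setminus\overline\Omega$. The main obstacle I anticipate is the bookkeeping of constants --- the $\dot H^1$-spaces are defined modulo constants and $\Omega$, $\mathbb{R}^n\setminus\overline\Omega$ contribute independent constants --- so the cleanest route is to fix throughout the trace-jump readjusted representatives of Remarks \ref{R:readjustedversion} and \ref{R:readjustsingle}, verify the Cauchy-data matching on $\partial\Omega$ and $\partial D$ for those representatives, and only at the end pass back to $\dot H^1$-classes; the purely analytic content (jump relations, unique solvability of \eqref{phi}) is then immediate from Theorems \ref{T:Kdot} and \ref{ThSpecNPo}.
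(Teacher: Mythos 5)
Your overall plan (subtract the harmonic part $H$, identify the remainder as a single layer potential on $\partial D$ through its jump relations, then invoke Theorem \ref{T:Kdot}(ii) and Theorem \ref{ThSpecNPo}) is viable and close in spirit to the paper's proof, but two steps fail as written. First, $u$ and $H$ do \emph{not} share the same interior Cauchy data on $\partial\Omega$: by Theorem \ref{T:Kdot}(i) one has $\dTTr_{\bord,i}H=\dot{\mathscr{V}}_{\bord}\,g+\big(\tfrac12 I-\dot{\mathscr{K}}_{\bord}\big)f$, which equals $f$ only when $(f,g)$ is the Cauchy pair of a function harmonic in all of $\Omega$ --- and $u$ is not harmonic in $\Omega$ because of the inclusion $D$. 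What is true, and what your argument actually needs, is that the \emph{jumps} of $H$ across $\partial\Omega$ coincide with those of the zero extension $u\mathds{1}_\Omega$, namely $f$ in trace and $g$ in normal derivative. Hence the function to work with is $w:=u\mathds{1}_\Omega-H$ on all of $\mathbb{R}^n\setminus(\partial\Omega\cup\partial D)$. Your $w$ (``$u-H$ on $\Omega$, extended by a constant outside'') does not have vanishing jumps across $\partial\Omega$: its exterior normal derivative there is $0$ while the interior one is $g-\frac{\dot{\partial}_i H}{\partial\nu}\big|_{\bord}\neq 0$ in general, and $H$ is not constant on $\mathbb{R}^n\setminus\overline{\Omega}$. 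So Corollary \ref{C:SLbijectiveo} cannot be applied to that object, and your final paragraph's derivation of the constancy on $\mathbb{R}^n\setminus\overline{\Omega}$ becomes circular. With the corrected $w$ the constancy is immediate: outside $\overline{\Omega}$ one has $w=\mathrm{const}-H$ and, after the identification, $w=-\dot{\mathscr{S}}_{\partial D}\varphi$.

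Second, the flux matching on $\partial D$ is reversed. The weak form of \eqref{incl} gives $k\,\frac{\dot{\partial}_i u}{\partial\nu}\big|_{\partial D}=\frac{\dot{\partial}_e u}{\partial\nu}\big|_{\partial D}$ (interior taken from inside $D$, where the coefficient is $k$); this is exactly the identity the paper derives for its candidate solution. Your version $\frac{\dot{\partial}_i u}{\partial\nu}=k\,\frac{\dot{\partial}_e u}{\partial\nu}$, pushed through the jump relations of Theorem \ref{T:Kdot}(ii), yields $\big(\tfrac{k+1}{2(1-k)}I+\dot{\mathscr{K}}^*_{\partial D}\big)\varphi=\frac{\dot{\partial} H}{\partial\nu}\big|_{\partial D}$, which is not \eqref{phi}. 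For comparison, the paper avoids identifying $w$ altogether: it defines $\varphi$ by \eqref{phi} (solvable and unique by Theorem \ref{ThSpecNPo}, since $|\tfrac{k+1}{2(k-1)}|>\tfrac12$), checks that both $u\mathds{1}_\Omega$ and $H-\dot{\mathscr{S}}_{\partial D}\varphi$ are weak solutions of the transmission problem \eqref{inclRn}, and concludes by uniqueness, which yields \eqref{rep} and the exterior constancy in one stroke. Your route can be repaired along either line --- correct the two points above and verify $w\in\dot{V}_{0,\dot{\mathscr{S}}}(\mathbb{R}^n\setminus\partial D)$ by gluing across $\partial\Omega$ --- but as written both the identification of $w$ as a single layer potential and the derivation of \eqref{phi} fail.
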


The function $H|_\Omega$ is the harmonic part of $u$ in $\Omega$, while $(-\dot{\Scal}_{\partial D}\varphi)|_\Omega$ is its refraction part.
The existence and uniqueness of $\varphi\in\dot{\mathcal{B}}'(\partial D)$ solving \eqref{phi} follow from Theorem \ref{ThSpecNPo}.

\begin{proof} Consider the bilinear form 
\[Q(w,v)=k\int_D\nabla w\cdot\nabla v\:\dx+\int_{\Omega\backslash\overline{D}}\nabla w\cdot\nabla v\:\dx,\quad w,v\in \dot{H}^1(\mathbb{R}^n\setminus\partial\Omega).\]
As before, let $\chi\in C_c^\infty(\Omega)$ be such that $0\leq \chi\leq 1$ and $\chi\equiv 1$ on a neighbourhood of $\overline{D}$.
Similarly as in Subsection \ref{Subsec:Superposition}, we call an element $w$ of $\dot{H}^1(\R^n\backslash \partial\Omega)$ a weak solution in the $\dot{H}^1$-sense of the transmission problem formally stated as
\begin{equation}\label{inclRn}
\begin{cases}
\displaystyle\nabla\cdot\Big(\big(1+(k-1)\mathds{1}_D\big)\nabla w\Big)=0 &\mbox{on }\nbord\\
w_i|_{\partial\Omega}-w_e|_{\partial\Omega}=f\\
\frac{\partial_iw_i}{\partial\nu}|_{\partial\Omega}-\frac{\partial_ew_e}{\partial \nu}|_{\partial\Omega}= g,
\end{cases}
\end{equation}
if it satisfies $Q(w,v)=0$ for all $v\in C_c^\infty(\R^n\backslash\partial\Omega)$, $\left\langle w,(1-\chi)v\right\rangle_{\dot{H}^1(\R^n\backslash \partial\Omega)}=\big\langle g,\dTr v\big\rangle_{\Hmo,\Hpo}$ for all $v\in \dot{V}_{0,\dot{\Scal}}(\R^n\backslash \partial\Omega)$ and finally $\left\langle w,v\right\rangle_{\dot{H}^1(\R^n\backslash \partial\Omega)}=\langle \dot\partial_\nu v, f\rangle_{\Hmo,\Hpo}$ for all $v\in \dot{V}_{0,\dot{\Dcal}}(\R^n\backslash \partial\Omega)$. As in Section \ref{Subsec:TSA-Jumps} the notations $w_i$ and $w_e$ in \eqref{inclRn} stand for the parts of the prospective weak solution $w$ on $\Omega$ respectively $\mathbb{R}^n\backslash \overline{\Omega}$.

Since the bilinear form $Q$ is comparable to $\left\langle \cdot,\cdot\right\rangle_{\dot{H}^1(\mathbb{R}^n\backslash \partial\Omega)}$ on $\dot{H}^1(\mathbb{R}^n\backslash \partial\Omega)$, there is a unique weak solution $w$ of \eqref{inclRn} in the $\dot{H}^1$-sense.

The function $w$, defined by $w:=u$ on $\Omega$ and $w:=0$ on $\R^n\backslash \overline{\Omega}$ is a weak solution of \eqref{inclRn} in the $\dot{H}^1$-sense. We now show that also $w':=H-\dot{\Scal}_{\partial D}\varphi$ is a weak solution of \eqref{inclRn} in the $\dot{H}^1$-sense. For $v\in C_c^\infty(\mathbb{R}^n\backslash \overline{\Omega})$ clearly $Q(w',v)=0$. For $v\in C_c^\infty(\Omega)$ we can, basically following the arguments in \cite[Lemma 3.3]{KANG-1996}, use the fact that $w'\in \dot{V}_0(D)\cap \dot{V}_0(\Omega\backslash\overline{D})$ and the corresponding analogs of \eqref{E:GGi} and \eqref{E:GGe} with $D$ in place of $\Omega$ to see that 
\[Q(w',v)=\big\langle k\:\dot{\partial}_{\nu,i} w'\big|_{\partial D}-\dot{\partial}_{\nu,e} w'\big|_{\partial D},\dTTr_{\partial D}v\big\rangle_{\!\B'(\partial D),\,\B(\partial D)}=0;\]
note that
\[k\:\dot{\partial}_{\nu,i} w'\big|_{\partial D}-\dot{\partial}_{\nu,e} w'\big|_{\partial D}=(k-1)\dot{\partial}_{\nu}H|_{\partial D}-\Big(\frac{k+1}{2}I+(k-1)\dot{\Kcal}^\ast_{\partial D}\Big)\varphi=0\]
by  Theorem \ref{T:Kdot} (ii) and \eqref{phi}. Given $v\in \dot{V}_{0,\dot{\Scal}}(\mathbb{R}^n\backslash\partial\Omega)$, we have 
\[\left\langle w',(1-\chi)v\right\rangle_{\dot{H}^1(\mathbb{R}^n\backslash\partial\Omega)}
= \big\langle \llbracket \dot{\partial}_\nu w'\rrbracket_{\partial \Omega},\dTr_{\bord} v\big\rangle_{\Hmo,\Hpo}
= \big\langle g,\dTr_{\bord} v\big\rangle_{\Hmo,\Hpo}\]
by analogs of \eqref{E:GGi} and \eqref{E:GGe} and the definition of $w'$. Given $v\in \dot{V}_{0,\dot{\Dcal}}(\mathbb{R}^n\backslash\partial\Omega)$, we similarly find that 
\[\left\langle w',v\right\rangle_{\dot{H}^1(\mathbb{R}^n\backslash\partial\Omega)}\notag\\
=\big\langle \dot{\partial}_\nu v|_{\partial\Omega}, \llbracket \dTr w'\rrbracket\big\rangle_{\Hmo,\Hpo}=\big\langle \dot{\partial}_\nu v|_{\partial\Omega}, f\big\rangle_{\Hmo,\Hpo}.\]
Consequently $w'$ is a weak solution of \eqref{inclRn} in the $\dot{H}^1$-sense, and therefore $w=w'$ in $\dot{H}^1(\R^n\backslash \partial\Omega)$ by uniqueness, which shows \eqref{rep} and the last claim in the theorem.

If \eqref{rep} holds with $H'\in \dot{V}_0(\Omega)$ and $\varphi'\in\dot{\mathcal{B}}'(\partial D)$ in place of $H$ and $\varphi$, then $H-\dot{\Scal}_{\partial D}\varphi=H'-\dot{\Scal}_{\partial D}\varphi'$, consequently $\dot{\Scal}_{\partial D}(\varphi-\varphi')\in \dot{V}_0(\Omega)$ and 
\[\big\llbracket \dot{\partial}_\nu\dot{\Scal}_{\partial D}(\varphi-\varphi')\big\rrbracket_{\partial D}=0.\] 
But this implies that $\varphi'=\varphi$ and therefore $H'=H$.

\end{proof}

\subsection{Subdomain identification}\label{SubsubDomInd}

We give a generalization of a theorem on the identification of \enquote{monotone} inclusions through a single boundary measurement. For Lipschitz domains, it can be found as a part of \cite[Theorem A.7]{AMMARI-2004}; the formulation below works for two-sided $\dot{H}^1$-admissible domains $\Omega$. 

\begin{theorem}\label{monotonous}
Let $\Omega$ be $\dot{H}^1$-admissible, let $D_1$ and $D_2$ be two-sided $\dot{H}^1$-admissible domains such that $D_1\subset D_2\subset\subset \Omega$ and let $k\in(0,1)\cup(1,+\infty)$. Suppose that $g\in\Hmo$ is nonzero and that $u_1$ and $u_2$ are the unique weak solutions in the $\dot{H}^1$-sense of \eqref{incl} with $D_1$ and $D_2$ in place of $D$ respectively. Then $\dTTr_{\bord,i} u_1=\dTTr_{\bord,i} u_2$ implies $D_1=D_2$.
\end{theorem}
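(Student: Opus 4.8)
The plan is to argue by contradiction, combining the classical variational ``monotonicity'' comparison with the interior real-analyticity of solutions of constant-coefficient equations; the whole point of this route is that unique continuation is used only for harmonic functions in the interior of $\Omega$, and never at the (possibly fractal) interfaces $\partial D_1$, $\partial D_2$ or $\bord$.

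Set $\gamma_j:=1+(k-1)\mathds{1}_{D_j}$, so that $\min(1,k)\le\gamma_j\le\max(1,k)$, $\gamma_j\equiv k$ on $D_j$, $\gamma_j\equiv 1$ on $\Omega\setminus\overline{D_j}$, and $\gamma_2-\gamma_1=(k-1)\mathds{1}_{D_2\setminus D_1}$ since $D_1\subset D_2$. The weak solution $u_j$ of \eqref{incl} with $D_j$ in place of $D$ is equivalently characterized as the unique minimizer over $\dot{H}^1(\Omega)$ of the strictly convex, coercive functional $J_j(w):=\tfrac12\int_\Omega\gamma_j|\nabla w|^2\,\dx-\langle g,\dTTr_{\bord,i}w\rangle_{\Hmo,\Hpo}$, and testing the weak formulation against $u_j$ gives $\int_\Omega\gamma_j|\nabla u_j|^2\,\dx=\langle g,\dTTr_{\bord,i}u_j\rangle_{\Hmo,\Hpo}$. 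First I would use the hypothesis $\dTTr_{\bord,i}u_1=\dTTr_{\bord,i}u_2=:f$ to note that the two energies coincide, $\int_\Omega\gamma_1|\nabla u_1|^2\,\dx=\langle g,f\rangle=\int_\Omega\gamma_2|\nabla u_2|^2\,\dx$. Then I would compare: inserting $\langle g,\dTTr_{\bord,i}u_2\rangle=\langle g,f\rangle=\int_\Omega\gamma_2|\nabla u_2|^2$ into $J_1(u_1)\le J_1(u_2)$ gives $\int_\Omega(\gamma_1-\gamma_2)|\nabla u_2|^2\,\dx\ge0$, and symmetrically $J_2(u_2)\le J_2(u_1)$ gives $\int_\Omega(\gamma_2-\gamma_1)|\nabla u_1|^2\,\dx\ge0$; that is, $(k-1)\int_{D_2\setminus D_1}|\nabla u_2|^2\,\dx\le0$ and $(k-1)\int_{D_2\setminus D_1}|\nabla u_1|^2\,\dx\ge0$. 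Hence $\nabla u_2=0$ a.e.\ on $D_2\setminus D_1$ when $k>1$, and $\nabla u_1=0$ a.e.\ on $D_2\setminus D_1$ when $0<k<1$; let $m\in\{1,2\}$ denote the index so obtained.

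Next I would assume for contradiction that $D_1\ne D_2$; since $D_1\subset D_2$ and $D_1,D_2$ are domains, $W:=D_2\setminus\overline{D_1}$ is then a non-empty open set on which $\nabla u_m$ vanishes a.e. Testing \eqref{incl} against $v\in C_c^\infty(D_2)$, splitting the integral over $D_1$ and $D_2\setminus D_1$, and using $\gamma_m\equiv k$ on $D_1$ together with $\nabla u_m=0$ a.e.\ on $D_2\setminus D_1$, one finds $\int_{D_2}\nabla u_m\cdot\nabla v\,\dx=0$, so $u_m$ is harmonic, hence real-analytic, on the connected open set $D_2$; since $\nabla u_m$ then vanishes on the non-empty open subset $W$, the identity theorem forces $\nabla u_m\equiv0$ on $D_2$, i.e.\ $u_m$ is constant on $D_2$. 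Testing \eqref{incl} once more against arbitrary $v\in C_c^\infty(\Omega)$, using $\nabla u_m=0$ a.e.\ on $D_2$ and $\gamma_m\equiv1$ on $\Omega\setminus\overline{D_2}$, gives $\int_\Omega\nabla u_m\cdot\nabla v\,\dx=0$, so $u_m$ is harmonic on all of $\Omega$; being constant on the non-empty open set $D_2$ and $\Omega$ being connected, $u_m$ is constant on $\Omega$. But then $\langle g,\dTTr_{\bord,i}v\rangle_{\Hmo,\Hpo}=\int_\Omega\gamma_m\nabla u_m\cdot\nabla v\,\dx=0$ for every $v\in\dot{H}^1(\Omega)$, and by the surjectivity of $\dTTr_{\bord,i}$ (Proposition \ref{P:traceo}) this forces $g=0$ in $\Hmo$, contradicting the hypothesis that $g$ is non-zero. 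Hence $D_1=D_2$.

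I expect the delicate part to be the organisation of the third paragraph: the passage ``$\nabla u_m=0$ on a piece of $D_2$'' $\Rightarrow$ ``$u_m$ constant on $\Omega$'' must be run using only interior elliptic regularity on the open sets where $\gamma_m$ is locally constant (so that the identity theorem for real-analytic functions applies), while letting the weak formulation of \eqref{incl} do the work of ``reconnecting'' the solution across $\partial D_1$ and $\partial D_2$ — legitimate precisely because the relevant conormal jumps vanish once $\nabla u_m=0$ on the outer side — thereby avoiding any unique continuation at the rough interfaces or at $\bord$. A secondary, routine point is the degenerate possibility $D_1\subsetneq D_2$ with $D_2\setminus\overline{D_1}=\emptyset$ (equivalently $|D_2\setminus D_1|=0$, e.g.\ a punctured version of $D_2$): under the standing assumptions, notably $|\partial D_j|=0$, this forces $\gamma_1=\gamma_2$ a.e.\ and hence $u_1=u_2$ identically, and it is either excluded by the regularity implicit in the notion of domain or else absorbed by reading the identity $D_1=D_2$ up to the null set $D_2\setminus D_1\subset\partial D_1$.
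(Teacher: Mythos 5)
Your argument is correct, and in its second half it actually goes further than the paper's printed proof. The first half is the same monotonicity comparison as in the paper, only organized differently: the paper subtracts the two weak formulations, tests with $u_1-u_2$ and with $u_1$, and uses $u_1-u_2\in\ker\dTTr_{\bord,i}$ to show that \eqref{img3}, hence \eqref{img2}, vanishes, which yields simultaneously $u_1=u_2$ in $\dot{H}^1(\Omega)$ and $\nabla u_2=0$ a.e.\ on $D_2\setminus D_1$ for $k>1$ (roles of $D_1,D_2$ swapped for $0<k<1$); you obtain the same gradient-vanishing information by comparing the energies of the two minimizers, which is equivalent in substance and equally rigorous — you merely forgo the byproduct $u_1=u_2$, which you never need. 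Where you genuinely diverge is the endgame: the paper's proof effectively stops at ``$u_1=u_2$'' and leaves the passage to the stated conclusion $D_1=D_2$ implicit, whereas your third paragraph supplies precisely that step — weak harmonicity of $u_m$ on $D_2$ (using $\gamma_m\equiv k$ on $D_1$, $\nabla u_m=0$ a.e.\ on $D_2\setminus D_1$, and $\langle g,\dTTr_{\bord,i}v\rangle=0$ for $v\in C_c^\infty(\Omega)$), real-analyticity and the identity theorem forcing $u_m$ constant on the connected set $D_2$, then (using $|\partial D_2|=0$ and $\gamma_m\equiv 1$ outside $D_2$) harmonicity on all of $\Omega$, constancy on $\Omega$, and finally $g=0$ via the surjectivity of $\dTTr_{\bord,i}$ from Proposition \ref{P:traceo}, a contradiction. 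Routing the unique continuation through $D_2$ and then through all of $\Omega$, both connected by hypothesis, is a good choice: it avoids any connectedness question for $\Omega\setminus\overline{D_1}$ and never invokes regularity at the rough interfaces or at $\bord$, exactly as you intended.

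The one soft spot is the step ``$D_1\neq D_2$ implies $W=D_2\setminus\overline{D_1}\neq\emptyset$'', which you honestly flag at the end: it does not follow from $D_1\subsetneq D_2$ alone, and under the paper's definitions it cannot be fully excluded (for $n\geq 2$ a punctured ball $D_1=B\setminus\{p\}\subsetneq D_2=B$ is two-sided $\dot{H}^1$-admissible, gives $\gamma_1=\gamma_2$ a.e.\ and hence identical solutions and traces, yet $D_1\neq D_2$ as sets). This is a defect of the literal statement rather than of your argument — the paper's own proof, ending at $u_1=u_2$, is equally powerless against it — and it disappears under a mild normalization such as $D_j=\operatorname{int}\overline{D_j}$ or by reading $D_1=D_2$ up to Lebesgue-null sets, as you suggest; stating that convention explicitly is the only repair your write-up needs.
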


\begin{proof} Suppose that $D_1\subsetneq D_2$. Since 
\[\int_\Omega{\big(1+(k-1)\mathds{1}_{D_1}\big)\nabla u_1 \cdot\nabla v\,\dx}=\int_\Omega{\big(1+(k-1)\mathds{1}_{D_2}\big)\nabla u_2 \cdot\nabla v\,\dx}\]
for all $v\in \dot{H}^1(\Omega)$, it follows that 
\begin{equation}\label{E:startingpoint}
\int_\Omega{\big(1+(k-1)\mathds{1}_{D_1}\big)\nabla(u_1-u_2)\cdot\nabla v\,\dx}=(k-1)\int_{D_2\backslash D_1}{\nabla u_2\cdot\nabla v\,\dx}.
\end{equation}
Testing with $v=u_1-u_2$ gives
\begin{align}\label{img2}
\int_\Omega \big(1+(k-1)\mathds{1}_{D_1}\big)|\nabla(u_1-u_2)|^2\,\dx & +(k-1)\int_{D_2\backslash D_1} |\nabla u_2|^2\,\dx\notag\\
&=(k-1)\int_{D_2\backslash D_1} \nabla u_2 \cdot\nabla u_1\,\dx,
\end{align}
and testing with $v=u_1$ shows that the right-hand side of \eqref{img2} equals
\begin{equation}\label{img3}
\int_\Omega{\big(1+(k-1)\mathds{1}_{D_1}\big)\nabla(u_1-u_2)\cdot\nabla u_1\,\dx}.
\end{equation}
By \eqref{E:weakreconstruct} the function $u_1$ is harmonic in $\Omega$ (that is, orthogonal to $C_c^\infty(\Omega)$) with respect to the equivalent scalar product
\[(u,w)\mapsto \int_\Omega\big(1+(k-1)\mathds{1}_{D_1}\big)\nabla u\cdot\nabla w\,\dx\]
on $\dot{H}^1(\Omega)$. Therefore, and since $u_1-u_2\in \ker \dTTr_{\partial\Omega,i}$, an analog of Theorem \ref{Tr0isom} (i) shows that \eqref{img3} is zero and therefore also \eqref{img2}. 

Suppose that $k>1$.
Since the first summand on the left-hand side of \eqref{img2} must be zero, we have $u_1=u_2$ in $\dot{H}^1(\Omega)$.
Since also the second summand must be zero, we have $u_2|_{D_2\setminus\overline{D}_1}=0$ in $\dot{H}^1(D_2\setminus\overline{D}_1)$. However, this would mean that each representative up to constants of $u_2|_{\Omega\setminus \overline{D}_1}\in \dot{V}_0(\Omega\setminus \overline{D}_1)$ would be constant on the open set $D_2\backslash\overline{D}_1$. Since $D_1$ is two-sided $\dot{H}^1$-admissible, we have $D_2\cap \partial D_1=D_2\cap \partial(\R^n\backslash \overline{D}_1)=D_2\cap \partial(D_2\backslash \overline{D}_1)$. Together with the fact that $D_2\backslash D_1$ is nonempty, this shows that $D_2\backslash \overline{D}_1$ cannot be empty. Consequently, by its harmonicity in $\Omega\setminus \overline{D}_1$ (in the sense of \eqref{VlOr} and \eqref{DOo}),
each representative up to constants of $u_2|_{\Omega\setminus \overline{D}_1}$ would have to be constant on all of $\Omega\setminus \overline{D}_1$, contradicting the boundary condition in \eqref{incl} with nonzero $g$. Consequently $D_1=D_2$ in this case. 

For $k\in(0,1)$, the result follows by the same arguments, but with (\ref{E:startingpoint}) replaced by 
\[\int_\Omega{\big(1+(k-1)\mathds{1}_{D_2}\big)\nabla(u_2-u_1)\cdot\nabla v\,\dx}=(1-k)\int_{D_2\backslash D_1}{\nabla u_1\cdot\nabla v\,\dx}.\]
\end{proof}

The representation formula in Theorem~\ref{ThImagRepr} can be simplified if $n=2$ in the case of an open disk $D$. Note that, although the geometry of the inclusion is assumed to be smooth here, the geometry of the larger domain can still be irregular. For that matter, the boundary data is still understood in the sense of $\dot\B$ and $\dot\B'$, and the operators are defined in the variational sense.

\begin{corollary}\label{corollary3}
Let $\Omega\subset \mathbb{R}^2$ be two-sided $\dot{H}^1$-admissible, $D$ be an open disk such that $D\subset\subset\Omega$, and let $k\in(0,1)\cup(1,+\infty)$. Given $g\in \Hmo$, let $u\in \dot{H}^1(\Omega)$ be the unique weak solution to \eqref{incl} in the $\dot{H}^1$-sense, let $f:=\dTTr_{\bord,i} u$ and 
$H=\dot{\Scal}_{\bord}\,g-\dot{\Dcal}_{\bord} f$. Then we have 
\begin{equation}\label{rep2}
u=H-\frac{2(k-1)}{k+1}\,\dot{\Scal}_{\partial D}\!\big(\dot{\partial}_\nu H |_{\partial D}\big),
\end{equation}
seen as an equality in $\dot{H}^1(\Omega)$.
\end{corollary}

\begin{proof}
As it was pointed out in Remark~\ref{Rem:SameK*}, if $D$ is a disk, then $\dot\Kcal^*_{\partial D}$ can be represented using the usual kernel formula~\cite{mitrea_generalization_2008,VERCHOTA-1984}.  By~\cite[Section 4]{KANG-1996}, $\dot{\Kcal}^*_{\partial D}\varphi=0$ for all $\varphi\in L^2_0(\partial D)$. Recall that, up to norm equivalence, $\dot\B'(\partial D)$ equals $\dot{H}^{-\frac12}(\partial D)$, cf. Remark \ref{R:boundary_spaces}. Since $L^2_0(\partial D)$ is dense in this space and $\dot\Kcal^*_{\partial D}$ is bounded on $\dot\B'(\partial D)$, we have 
\begin{equation}\label{E:Kastzero}
\dot{\Kcal}^*_{\partial D}\varphi=0,\qquad \varphi\in \dot\B'(\partial D).
\end{equation}
Combining this with \eqref{phi}, formula \eqref{rep2} follows.
\end{proof}

Using Theorem \ref{monotonous} and Corollary \ref{corollary3}, the identification of a general disk-shaped inclusion when $\Omega$ is a two-sided extension domain of $\R^2$ follows by slight variations of the arguments in~\cite[Theorem A]{KANG-1996}.

\begin{theorem}\label{ThImagDisks}
Let $\Omega\subset \mathbb{R}^2$ be two-sided $\dot{H}^1$-admissible, $D_1$ and $D_2$ be open disks such that $D_1, D_2\subset\subset \Omega$ and
let $k\in(0,1)\cup(1,+\infty)$. Suppose that $g\in\Hmo$ is nonzero and that $u_1$ and $u_2$ are the unique weak solutions in the $\dot{H}^1$-sense of \eqref{incl} with $D_1$ and $D_2$ in place of $D$ respectively. Then $\dTTr_{\bord,i} u_1=\dTTr_{\bord,i} u_2$ implies $D_1=D_2$.
\end{theorem}
	
\begin{proof}
Assume $D_1\cap D_2\not\in\{D_1,D_2\}$, otherwise Theorem \ref{monotonous} yields the result. Let us denote $f:=\dTTr_{\bord,i}u_1=\dTTr_{\bord,i}u_2$. The representation formula \eqref{rep2} states that for $p=1,2$ we have 
\begin{equation*}
u_p=H-\frac{2(k-1)}{k+1}\,\dot{\Scal}_{\partial D_p}\!\big(\dot{\partial}_\nu H|_{\partial D_p}\big)
\end{equation*}
 in $\dot H^1(\Omega)$ with $H=\dot{\Scal}_{\bord}\,g-\dot{\Dcal}_{\bord} f \in \dot H^1(\R^2\backslash \partial \Omega)$, as before. Consequently 
\[\dTTr_{\partial\Omega,i} \dot{\Scal}_{\partial D_1}\!\big(\dot{\partial}_\nu H|_{\partial D_1}\big)=\dTTr_{\partial\Omega,i} \dot{\Scal}_{\partial D_2}\!\big(\dot{\partial}_\nu H|_{\partial D_2}\big)\]
in $\Hmo$, and therefore 
\[\dot{\Scal}_{\partial D_1}\!\big(\dot{\partial}_\nu H|_{\partial D_1}\big)=\dot{\Scal}_{\partial D_2}\!\big(\dot{\partial}_\nu H|_{\partial D_2}\big) \quad \text{in $\dot{H}^1(\R^2\backslash \overline{\Omega})$}\]
by the uniqueness of the exterior Dirichlet problem. The equality remains true if we replace the single layer potentials $\dot{\Scal}_{\partial D_p}$ by their zero trace jump readjusted variants $\overline{\Scal}_{\partial D_p}$ as in \eqref{E:readjustsingle}. Let $s_p\in \overline{\Scal}_{\partial D_p}\!\big(\dot{\partial}_\nu H|_{\partial D_p}\big)$ be representatives modulo constants such that $s_1=s_2$ on $\mathbb{R}^2\backslash \overline{\Omega}$; 
by the regularity of $H$ we may assume $s_1$ and $s_2$ are continuous on $\mathbb{R}^2$, cf. Remark \ref{R:strictcaseSLdot}. Their harmonicity on $\mathbb{R}^2\backslash \overline{D}_1$ respectively $\mathbb{R}^2\backslash \overline{D}_2$ and their continuity
then imply that $s_1=s_2$ on $\mathbb{R}^2\backslash (D_1\cup D_2)$.

If $D_1\cap D_2=\emptyset$, then $s_2$ is a harmonic extension of $s_1$ to $D_1$ with the same trace on $\partial D_1$, hence $s_1=s_2$ in $D_1$. This implies that $s_1$ itself must be harmonic on all of $\mathbb{R}^2$ and therefore constant. But then $\dot{\partial}_\nu H=0$, so that $H$ is constant, hence $\dot{\Scal}_{\bord}\,g=0$ in $\dot H^1(\R^2\backslash \partial \Omega)$, which contradicts the nonzero boundary condition in \eqref{incl}.

Now suppose that $D_1\cap D_2\neq \emptyset$. By \eqref{E:Kastzero} we have 
\[\partial_{\nu,i}\big|_{\partial D_p}\circ \dot{\Scal}_{\partial D_p}\!\big(\dot{\partial}_\nu H|_{\partial D_p}\big)=\frac{1}{2}\dot{\partial}_\nu H|_{\partial D_p},\quad p=1,2,\]
and therefore, by the uniqueness of the Neumann problem,
\[\dot{\Scal}_{\partial D_p}\!\big(\dot{\partial}_\nu H|_{\partial D_p}\big)=\frac{1}{2}H\quad\text{in $\dot{H}^1(D_p)$},\quad p=1,2.\]
But this implies that $s_1$ equals $s_2$ plus a constant on $D_1\cap D_2$, and since $s_1=s_2$ outside $D_1\cup D_2$, this constant must be zero by continuity. By harmonicity then $s_1=s_2$ on $D_1\cup D_2$. Using $s_2$ as an extension of $s_1$, we find that $s_1$ must be harmonic on $\mathbb{R}^2$, which gives the same contradiction as before.
\end{proof}

\appendix

\section{Background proofs for Section \ref{SecBoundCase}}\label{PSecBoundCase}

We collect some background on the potential theoretic notions used in Section \ref{SecBoundCase}. 

The \emph{capacity} $\cpct(U)$ of an open set $U\subset\mathbb{R}^n$ is defined by
\[\cpct(U):=\inf\big\lbrace \|u\|_{H^1(\mathbb{R}^n)}^2: u\in H^1(\mathbb{R}^n),\ u\geq 1\ \text{a.e. on $U$}\big\rbrace\]
with the agreement that $\inf \emptyset=+\infty$. The capacity $\cpct(A)$ of a general set $A\subset \mathbb{R}^n$ is defined by
\[\cpct(A):=\inf\big\lbrace \cpct(U):\ A\subset U,\ \text{$U$ open}\big\rbrace.\]
See \cite[Section 2.3]{CHEN-FUKUSHIMA-2012}, \cite[Section 2.1]{FOT94}  or \cite{ADAMS-1996, MAZ'JA-1985}. 

A set of zero capacity has zero Lebesgue measure. For $n=1$ all nonempty sets have positive capacity.
A property which holds outside a set of zero capacity is said to hold \emph{quasi everywhere}, or short, \emph{q.e.} 

An extended real valued function $v$ defined q.e. on $\mathbb{R}^n$ is \emph{quasi continuous} if for any $\varepsilon>0$ there is an open set $G\subset \mathbb{R}^n$ such that $\cpct(G)<\varepsilon$ and $v$ is continuous on $\mathbb{R}^n\setminus G$. Each element $u$ of $H^1(\R^n)$ has a quasi continuous representative $\widetilde{u}$, see for instance \cite[Theorem 2.3.4]{CHEN-FUKUSHIMA-2012}, \cite[Theorem 2.1.3]{FOT94} or \cite{ADAMS-1996, MAZ'JA-1985}. Two quasi continuous representatives of the same element $u$ of $H^1(\mathbb{R}^n)$ agree q.e. on $\mathbb{R}^n$, see \cite[p. 71]{FOT94} or \cite[Theorem 6.1.4]{ADAMS-1996}. 

For any open set $\Omega\subset \R^n$, the bilinear form \eqref{E:spdot}, endowed with the domain $H^1(\Omega)$, is a Dirichlet form \cite[Definition 1.1.2]{CHEN-FUKUSHIMA-2012}, see also \cite[Section 1.1]{FOT94}. By $H^1_e(\Omega)$ we denote its extended Dirichlet space \cite[Definition 1.1.4]{CHEN-FUKUSHIMA-2012}, see also \cite[Section 1.5]{FOT94}. For $\Omega=\R^n$ it is well known that 
\begin{equation}\label{E:geq3}
\text{if $n\geq 3$, then $(H_e^1(\R^n),\left\langle\cdot,\cdot\right\rangle_{\dot{H}^1(\R^n)})\cong (\dot{H}^1(\R^n),\left\langle\cdot,\cdot\right\rangle_{\dot{H}^1(\R^n)})$}
\end{equation}
in the sense that the vector spaces are isomorphic and the isomorphism is a Hilbert space isometry, and 
\begin{equation}\label{E:eq2}
\text{if $n\leq 2$, then $(H_e^1(\R^n),\left\langle\cdot,\cdot\right\rangle_{\dot{H}^1(\R^n)})=(\dot{H}^1(\R^n)\oplus \R,\left\langle\cdot,\cdot\right\rangle_{\dot{H}^1(\R^n)})$},
\end{equation}
seen as an equality of vector spaces endowed with bilinear forms. Proofs of \eqref{E:geq3} and \eqref{E:eq2} can for instance be found in \cite[Theorems 2.2.12 and 2.2.13]{CHEN-FUKUSHIMA-2012}.

{\color{black} Also for each $u\in \dot{H}^1(\R^n)$ we can find a quasi continuous representative $\widetilde{u}\in u$; the difference between two quasi continuous representatives of the same $u\in \dot{H}^1(\R^n)$ is constant q.e. on $\mathbb{R}^n$. This follows from \cite[Theorem 2.3.4]{CHEN-FUKUSHIMA-2012} and the identities \eqref{E:geq3} and \eqref{E:eq2}. }

We provide arguments and references for Theorems \ref{Trisom} (i) and \ref{Tr0isom} (i). We start with Theorem \ref{Trisom} (i).

\begin{proof}[Proof of Theorem \ref{Trisom} (i)] We consider the quadratic form defined by \eqref{E:spdot} with $\Omega=\R^n$ and the domain $H^1(\R^n)$. For (i) it suffices to note that by \cite[Corollary 2.3.1 and Example 2.3.1]{FOT94} we have 
\begin{equation}\label{identH10}
H_0^1(\R^n\backslash\partial\Omega)=\{w\in H^1(\R^n)\ |\ \text{$\widetilde{w}=0$ q.e. on $\partial\Omega$}\},
\end{equation}
where $\widetilde{w}$ is a quasi continuous version of $w$. Now $u\in H^1(\Omega)$ has an extension to an element of $H_0^1(\R^n\backslash\partial\Omega)$ if and only if $u\in H^1_0(\Omega)$, and it has an extension to an element of the right-hand side of \eqref{identH10} if and only if $\TTr u=0$. 
\end{proof}

A proof of Theorem \ref{Tr0isom} (i) is given in \cite[Example 2.3.2]{FOT94}, only in a slightly different language. We sketch how to adapt it to our formulation.

\begin{proof}[Proof of Theorem \ref{Tr0isom} (i)] An element $u\in \dot{H}^1(\Omega)$ is in $\ker \dTTr$ if and only if it is the restriction to $\Omega$ of an element of 
\begin{equation}\label{E:globaltracezero}
\{w\in \dot{H}^1(\R^n)\ |\ \text{$\widetilde{w}=0$ q.e. on $\partial\Omega$}\},
\end{equation}
where $\widetilde{w}=0$ is understood representative wise and modulo constants. For $n\geq 3$ the space \eqref{E:globaltracezero} may be identified with 
\begin{equation}\label{E:globalextendedtracezero}
\{v\in H_e^1(\R^n)\ |\ \text{$\widetilde{v}=0$ q.e. on $\partial\Omega$}\}
\end{equation}
under the isometry in \eqref{E:geq3}. By \cite[Theorem 2.3.3 and Example 2.3.2]{FOT94} the space \eqref{E:globalextendedtracezero} coincides with $H_{0,e}^1(\R^n\backslash\partial\Omega)$, the extended Dirichlet space of \eqref{E:spdot}, endowed with the smaller domain $H_0^1(\R^n\backslash\partial\Omega)$. The complement of $H_{0,e}^1(\R^n\backslash\partial\Omega)$ in $H_e^1(\R^n)$ is isometric to 
\begin{equation}\label{E:globalcomplement}
\bigg\{w\in\dot{H}^1(\R^n)\ \bigg|\ \text{$\int_{\R^n}\nabla w\cdot\nabla v~\dx=0$ for all $v\in C_c^\infty(\R^n\backslash \partial\Omega)$}\bigg\},
\end{equation}
and since $\R^n\backslash \partial\Omega$ is the disjoint union of $\Omega$ and $\R^n\backslash \overline{\Omega}$, 
the restriction of this space to $\Omega$ is $\dot{V}_0(\Omega)$. For $n=2$ it is shown in \cite[Example 2.3.2]{FOT94} that the space \eqref{E:globalextendedtracezero} still coincides with $H_{0,e}^1(\R^n\backslash\partial\Omega)$, and  (simplifications of) the same arguments give this coincidence also for $n=1$. By \eqref{E:eq2}, the space \eqref{E:globalextendedtracezero} may then be regarded as a closed subspace of $\dot{H}^1(\R^n)$, and the complement of this closed subspace is \eqref{E:globalcomplement}.
\end{proof}

\def\refname{References}
\bibliographystyle{siam}
\bibliography{biblio}

\end{document}